\newcommand{\laurent}[1]{\todo[inline, size=\tiny, author=Laurent, backgroundcolor=green!20]{#1}}
\newcommand{\marcello}[1]{\todo[inline, size=\tiny, author=Marcello, backgroundcolor=blue!20]{#1}}
\newcommand{\giovanni}[1]{\todo[inline, size=\tiny, author=Giovanni, backgroundcolor=purple!40]{#1}}
\theoremstyle{plain}
\newtheorem{theorem}[table]{Theorem}
\newtheorem{remark}[table]{Remark}
\newtheorem{corollary}[table]{Corollary}
\newtheorem*{conjecture*}{Conjecture}
\newtheorem*{theorem1}{Main Theorem A}
\newtheorem*{theorem2}{Main Theorem B}
\newtheorem{prop}[table]{Proposition}
\newtheorem{lemma}[table]{Lemma}
\newtheorem{prob}[table]{Problem}
\crefname{equation}{}{}
\theoremstyle{definition}
\newtheorem{definition}[table]{Definition}
\def\CC{{\mathbb{C}}}
\def\FF{{\mathbb{F}}}
\def\PP{{\mathbb{P}}}
\def\QQ{{\mathbb{Q}}}\def\ZZ{{\mathbb{Z}}}
\def\cO{{\mathcal{O}}}
\def\oo{{\mathcal{O}}}
\def\cE{{\mathcal{E}}}
\def\cF{{\mathcal{F}}}\def\cP{{\mathcal{P}}}
\def\cG{{\mathcal{G}}}
\def\cL{{\mathcal{L}}}
\def\U{{\mathcal{U}}}
\def\cN{{\mathcal{N}}}
\def\cU{{\mathcal{U}}}
\def\cQ{{\mathcal{Q}}}
\def\ra{{\rightarrow}}
\def\lra{{\longrightarrow}}
\def\Fl{{\mathrm{Fl}}}
\def\Gr{{\mathrm{Gr}}}
\def\HK{hyperK\"ahler}
\title{Even nodal surfaces of K3 type}
\author{Marcello Bernardara}
\address{Institut de Math\'ematiques de Toulouse \\ UMR 5219 \\ Universit\'e 
de Toulouse \\ 
Universit\'e Paul Sabatier \\ %
118 route de Narbonne \\ %
31062 Toulouse Cedex 9\\ %
France}
\email{marcello.bernardara@math.univ-toulouse.fr}
\author{Enrico Fatighenti}
\address{
Alma Mater Studiorum - Universit\`a di Bologna\\ Dipartimento di Matematica \\ Piazza di porta san Donato 5\\ 40126 Bologna.}
\email{enrico.fatighenti@unibo.it}
\author{Grzegorz Kapustka}
\address{Department of Mathematics and Informatics \\ Jagiellonian University \\
 {\L}ojasiewicza 6 \\ 30-348, Krak\'ow \\ Poland}
\email{grzegorz.kapustka@uj.edu.pl}
\author{Micha{\l}  Kapustka}
\address{Institute of Mathematics of the Polish Academy of Sciences \\ ul.\ \'Sniadeckich
8 \\ 00-656 Warszawa \\ Poland
}
\email{michal.kapustka@impan.pl}
\author{Laurent Manivel}
\address{Institut de Math\'ematiques de Toulouse \\ UMR 5219 \\ Universit\'e 
de Toulouse \\ CNRS, Universit\'e Paul Sabatier \\ %
118 route de Narbonne \\ %
31062 Toulouse Cedex 9\\ %
France}
\email{laurent.manivel@math.cnrs.fr}
\author{Giovanni Mongardi}
\address{
Alma Mater Studiorum - Universit\`a di Bologna\\ Dipartimento di Matematica \\ Piazza di porta san Donato 5\\ 40126 Bologna.}
\email{giovanni.mongardi2@unibo.it}
\author{Fabio Tanturri}
\address{Dipartimento di Matematica \\
Dipartimento di Eccellenza 2023-2027\\
Universit\`a di Genova\\
Via Dodecaneso 35\\
16146 Genova, Italy}
\email{tanturri@dima.unige.it}
\date{}
\begin{document}

\begin{abstract}
   We study Fano fourfolds of K3 type with a conic bundle structure. We construct direct geometrical links between these fourfolds and hyperK\"ahler varieties. As a result we describe families of nodal surfaces that can be seen as generalisations of Kummer quartic surfaces. Each of these families actually arises through two 
   families of Fano fourfolds, whose conic bundle structures are 
   related by hyperbolic reduction. 
\end{abstract}

\maketitle

\section{Introduction}

Quartic surfaces with the maximal number of nodes 
are Kummer quartics. Their study is a classical and fascinating topic, 
related to K3 surfaces.
Kummer quartics were introduced by  Ernst Kummer in \cite{K} as  quotients of the Jacobian of a genus $2$ curve by the involution $x\mapsto -x$. This involution has $16$ fixed points, the two-torsion points of the Jacobian, which give rise to the $16$ nodes of the quartic surface. 

The maximal number of nodes on a quintic surface is $31$ (see \cite{beauville}) and is realised by the Togliatti surface but cannot be obtained as a quotient of a smooth surface. 
Other examples of nodal surfaces were described in \cite{cc} as  degeneracy loci of symmetric morphisms (we call them surfaces with an even set of nodes); as a consequence they can be obtained as quotients of smooth surfaces. There is a classification of hypersurfaces with an even set of nodes of small degree but a general classification seems out of reach. 

On the other hand, in the recent \cite{hu21}, Huybrechts links some nodal quintic surfaces related to K3 surfaces and hyperK\"ahler manifolds in a very fascinating way, that is, via cubic fourfolds. Indeed, if $X_3 \subset \PP^5$ is a cubic hypersurface, and $\ell \subset X_3$ a general line, then resolving the projection $X_3 \dashrightarrow \PP^3$ from $\ell$, one obtains a conic bundle $X \to \PP^3$ whose discriminant is a quintic surface with $16$ nodes. As shown by Catanese \cite{ca} (see \cite[Prop.\ 115]{cataneseal} for the statement in this form), all quintic surfaces in $\PP^3$ with an even set of $16$ nodes arise from such a pair $(X_3,\ell)$ as discriminant $\Delta$ of this conic bundle. Moreover, there exists a double cover $F \to \Delta$, where $F$ is a smooth surface of general type, with a natural $\PP^1$-fibration $Z \to F$ parametrising vertical lines in the conic bundle structure. As observed in \cite{hu21}, this allows us to compare 
the middle cohomology $H^4(X_3,\ZZ)$ with the subspace $H^2(F)^- \subset H^2(F,\ZZ)$ which is anti-invariant with respect to the involution given by the double cover. The upshot is that $H^2(F)^-$ has a K3 structure
\cite[Theorem 0.1]{hu21}.

In this paper we are interested in studying other surfaces with a similar property that we will call even nodal surfaces of K3 type.

\begin{definition}
Let $S$ be a surface with an even set of nodes that appears as a quotient of $F$ by an involution.  We say that $S$ is an \emph{even nodal surface of $K3$ type} if the Hodge structure on $H^2(F)^-$ is of K3 type. 
\end{definition}

The first result of the paper, \cref{prop:cbisometry}, provides a way to construct even nodal surfaces of $K3$-type. More precisely,  whenever a smooth fourfold admits a conic bundle structure over a smooth threefold, then the discriminant locus of that conic bundle structure is an even nodal surface by a general result of Barth. \Cref{prop:cbisometry} states then that whenever the fourfold is a Fano fourfold of K3 type with  conic fibration then the associated discriminant locus is an even nodal surface of K3 type.

The remaining part of the paper is devoted to the study of examples of even nodal surfaces of K3 type arising from \cref{prop:cbisometry}  and their geometric relation with associated Fano and hyperK\"ahler manifolds. 

In particular, in this context, the rich geometry and the tight relations between Fano fourfolds of K3 type and hyperK\"ahler varieties lead to wonder whether the surface $F$ with its involution is nothing but a section of a natural hyperK\"ahler variety associated to the related Fano fourfold of K3 type, endowed with a non-symplectic involution.

Summing up, for fourfolds of K3 type with a conic bundle structure, 
it is natural to think of the following question.

\begin{prob}\label{prob:LLSV}
Let $F \to \Delta$ be the discriminant double cover associated to a conic bundle
$X \to B$ such that $X$ is a Fano fourfold of K3 type.
Is $F$ a section of a hyperK\"ahler manifold $M$ associated to $X$,  the involution on $F$ being induced by a non-symplectic involution on $M$?
\end{prob}

A situation similar to \cite{hu21} occurs in at least two more examples. Let us consider Gushel--Mukai fourfolds, i.e.\ transverse intersections of a cone over the Grassmannian $\Gr(2,5)$ with a hyperplane and a quadratic hypersurface. Such fourfolds $X_{10} \subset \Gr(2,5)$ admit actually two different conic bundle structures. The first one has base $\PP^3$ and is obtained by blowing up $X_{10}$ along a del Pezzo surface of degree four. The discriminant $\Delta \subset \PP^3$ is a sextic surface with $40$ nodes. The second one has for base a smooth quadric threefold $\QQ^3$, and is obtained by blowing up $X_{10}$ along a $\sigma$-quadric. The discriminant $\Delta \subset \QQ^3$ is a surface of degree $4$ in $\QQ^3$ (degree $8$ in $\PP^4$) 
with $20$ nodes. In both  cases, the discriminant double cover $F \to \Delta$ is a smooth surface of general type with an involution, and one can construct a K3 type Hodge structure of $H^2(F)^-$ via the geometric correspondence with $H^4(X_{10},\ZZ)$. The three examples are summarised in \cref{summaryTable}.

\begin{table}[h!bt]
\caption{Three Fano fourfolds $X$ of K3 type with a conic bundle structure $X \to B$ with discriminant $\Delta$.}
\label{summaryTable}
\begin{tabular}{ccc} \toprule
$X$ & \rule{10pt}{0pt}$B$\rule{10pt}{0pt} & $\Delta$ \\
					\midrule
				 Blow-up of $X_3$ along a line & $\PP^3$ & quintic with 16 nodes\\
				Blow-up of $X_{10}$ along a $dP_4$ & $\PP^3$ & sextic with 40 nodes \\
    Blow-up of $X_{10}$ along a $\sigma$-quadric & $\QQ^3$ & quartic with 20 nodes \\
					\bottomrule
				\end{tabular}
\end{table}

One of the results of this paper is a positive answer to \cref{prob:LLSV} in two of the three cases listed in \cref{summaryTable}.

\begin{theorem1}
\hypertarget{main_theorem_a}{}
Let $X_{10} \dashrightarrow B$ be one of the conic bundles above, and $F \to \Delta$ its discriminant double cover. Then $F$ is a section of a hyperK\"ahler manifold $M$ and the involution on $F$ is induced by a non-symplectic involution on $M$. Specifically:
\begin{itemize}[leftmargin=*]
    \item if $\Delta \subset \PP^3$ is a sextic with $40$ nodes, then $M$ is a double EPW sextic and $F$ is a double linear section of $M$;
    \item if $\Delta \subset \QQ^3$ is a quartic with $20$ nodes, then $M \subset \Gr(3,6)$ is a double EPW cube and $F$ is a section of $M$ by a special $\Gr(2,4)$.
\end{itemize}
In both cases, the double cover $F \to \Delta$ is the restriction of the double cover structure of $M$. Furthermore, the surfaces $F$ are smooth,  algebraically simply connected minimal surfaces of general type, whose numerical invariants are explicitly computed.
\end{theorem1}

Recently, a list of families of Fano fourfolds with K3 type Hodge structure in their middle cohomology was provided in \cite{bfmt}. Surprisingly, in one of the families, called in loc.cit.\ R-62, each member $X'$ arises as a resolution of a singular $(2,2)$ divisor in $\PP^2\times \PP^3$, and admits a conic bundle structure $X' \to \PP^3$ with discriminant $\Delta$ a quintic surface with an even set of $16$ nodes, hence defining a double cover $F \to \Delta$. Catanese's result implies then that $F$ and its involution are also associated to a cubic fourfold. The interesting fact is that the family R-62 from \cite{bfmt} comes with one modulus less than the family of cubic fourfolds, leading to suppose that a $F \to \Delta$ arising from a conic bundle $X' \to \PP^3$, and the K3 structure in $H^2(F)^-$ henceforth, are of some special type.

Even more interestingly, two families of Fano fourfolds from \cite{bfmt}, called in loc.cit.\ K3-31 and K3-35, arising both as blow-ups of smooth rational fourfolds (the intersection of two quadrics and a quadric respectively) along a genus 6 K3 surface, admit conic bundle structures on $\PP^3$ and $\QQ^3$ respectively, with the same type of discriminant as the two conic bundles constructed from $X_{10}$. Both families have one modulus less than $X_{10}$. Even though for these two discriminants, no equivalent of the results of  Catanese for quintics are known, it is natural to suppose that these two families correspond to special GM fourfolds and K3 structures henceforth. Moreover, one can raise a similar question to \cref{prob:LLSV}, asking whether $F$ with its involution is a section of a hyper-K\"ahler manifold with a non-symplectic involution.

The second main result of this paper is about proving that indeed in the three pairs of families of conic bundles, at least at the level of associated even nodal surfaces of K3 type, one family can be seen as a special subfamily of the other one, and that the Hodge K3 structures on $H^2(F)^-$ are related to the one of the cubic and the GM fourfold respectively. 

Let us fix some notation. We denote by $X \to B$ and $X' \to B$ two families of conic bundles on a threefold $B$, with discriminant divisor $\Delta$ and discriminant double cover $F \to \Delta$. We summarise the three cases in \cref{summaryTable2}.

\begin{table}[h!bt]
\caption{Each of the Fano fourfold $X' \to B$ is associated to a special Fano fourfold $X \to B$ from  \cref{summaryTable} with same discriminant  (labelling from \cite{bfmt}).}
\label{summaryTable2}
\begin{tabular}{ll} \toprule
$X$ general and $X'$ special & label \\ \midrule
$X \to X_3$ blow-up along a line & C-4 \\
$X' \to Z$ resolution, $Z$ being a singular $(2,2)$-divisor in $\PP^2 \times \PP^3$  & R-62 \\
\midrule
$X \to X_{10}$ blow-up along a $dP_4$ & GM-20\\
$X' \to \QQ^5 \cap \QQ^5 \subseteq \PP^6$ blow-up along a genus 6 K3 & K3-31\\
\midrule
$X \to X_{10}$ blow-up along a $\sigma$-quadric & GM-21\\
$X' \to \Gr(2,4)$ birational with base locus a genus 6 K3 with 2 nodes & K3-35\\
					\bottomrule
				\end{tabular}
\end{table}

\begin{theorem2}
\hypertarget{main_theorem_discriminants}{}
    For each $X'\to B$ as in \cref{summaryTable2} with associated discriminant double cover $F\to \Delta$  there exists a special fourfold $X\to B$ of type  given by \cref{summaryTable2} with isomorphic discriminant double cover $F\to \Delta$. In that case, the anti-invariant cohomology $H^2(F)^-$ contains the K3 Hodge substructure of $H^4(X,\ZZ)$, and also of $H^4(X',\ZZ)$.
Explicitly the subfamily of special fourfolds $X \to B$ correspond to:
\begin{description}
    \item[(C-4 / R-62)] cubic fourfolds containing a plane,
    \item[(GM-20 / K3-31)]  Gushel--Mukai fourfolds which are period partners of Gushel Mukai fourfolds containing a $\sigma$-plane,
    \item[(GM-21 / K3-35)] Gushel--Mukai fourfolds containing a $\tau$-quadric. 
\end{description}

\end{theorem2}

An interesting fact is that the relation described by \hyperlink{main_theorem_discriminants}{Main Theorem B}, in some cases, extends to a direct geometric relation between the fourfolds $X$ and $X'$. The latter then goes through a construction of quadric threefold fibrations by extending with a hyperbolic form the ``general'' conic bundles $X \to B$. In special cases, that can be identified geometrically, these quadric threefold fibrations admit a special isotropic bundle, through which $X' \to B$ is also realised by hyperbolic reduction. The recently developed theory of hyperbolic reductions of quadric bundles (see \cite{auel-berna-bolo,BKK,kuznethyperbolic}), along with a theorem on Hodge structures of conic bundles over threefolds, allow us to give a full description.

\begin{theorem}
\hypertarget{main_theorem_b}{}
For each of the pairs $(X,X')$ of  \hyperlink{main_theorem_discriminants}{Main Theorem B} in the cases C4/R-62 or GM-21/K3-35, there exists a quadric threefold fibrations $Y \to B$ such that:
\begin{enumerate}[leftmargin=*]
    \item Both conic fibrations $X' \to B$ and $X \to B$ are obtained by hyperbolic reduction from $Y \to B$.
 \item The fourfolds $X$ and $X'$ are $K(B)$-birationally equivalent.
 
\end{enumerate}
\end{theorem}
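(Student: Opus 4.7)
The plan is to follow the hyperbolic reduction framework of \cite{auel-berna-bolo,BKK,kuznethyperbolic}, and to realize both conic bundles $X\to B$ and $X'\to B$ as hyperbolic reductions of a single rank-$5$ quadric bundle $Y\to B$, a quadric threefold fibration. This single construction gives (1) directly, and (2) follows by Witt cancellation on generic fibers over $K(B)$.

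For the construction of $Y$, I would encode the conic bundle $X\to B$ by a quadratic form $q\in H^0(B,\operatorname{Sym}^2\mathcal E^\vee\otimes \mathcal L)$ on a rank-$3$ bundle $\mathcal E$, and build $Y\to B$ as the rank-$5$ quadric bundle in $\PP(\mathcal E\oplus \mathcal F)$ defined by $q\perp h$, where $(\mathcal F,h)$ is a rank-$2$ bundle carrying a suitably twisted hyperbolic form. The bundle $(\mathcal F,h)$ cannot be chosen abstractly: it must reflect the ambient projective geometry of the Fano fourfold. For C-4/R-62 it would be dictated by the projection of the plane $P\subset X_3$ from the line $\ell$, which produces a linear $\PP^2\hookrightarrow \PP^3$ together with an extra rank-$2$ piece of the ambient $\PP^4$-bundle over $\PP^3$; for GM-21/K3-35 it would come from the $\tau$-quadric inside $X_{10}$, which similarly enlarges the ambient $\PP^2$-bundle over $\QQ^3$ to a $\PP^4$-bundle. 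By construction the obvious isotropic subbundle $L\subset \mathcal F$ is isotropic in $Y$, and a direct computation identifies the hyperbolic reduction of $Y$ along $L$ with the original conic bundle $X\to B$.

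The core geometric content is to exhibit a second isotropic line subbundle $L'\subset Y$, distinct from $L$, whose hyperbolic reduction recovers $X'\to B$. Here \hyperlink{main_theorem_discriminants}{Main Theorem B} enters decisively: precisely because $X_3$ contains a plane, respectively because $X_{10}$ contains a $\tau$-quadric, the ambient projective bundle of $Y$ carries a natural rank-$2$ subbundle meeting $Y$ along an isotropic line $L'$. The resulting hyperbolic reduction $(L')^\perp/L'$ must then be matched against the explicit model of $X'\to B$: for R-62 against the $(2,2)$-divisor presentation in $\PP^2\times \PP^3$, and for K3-35 against the projective bundle description over $\QQ^3$ arising from the blow-up of a nodal genus-$6$ K3. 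The verification should be achieved by comparing the induced quadratic form on the quotient bundle with the explicit equations of $X'$ in each case.

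Once both realizations are in place, passing to the generic fiber over $F=K(B)$ yields
\[ Y_\eta \;\cong\; X_\eta \perp \HH \;\cong\; X'_\eta \perp \HH, \]
and Witt cancellation in characteristic zero forces $X_\eta\cong X'_\eta$ as quadratic forms, hence as conics over $F$; this is precisely the $K(B)$-birational equivalence claimed in (2). The main obstacle is unquestionably Step two, namely the construction and identification of $L'$: it requires a case-dependent analysis of the projective bundle geometry over $B=\PP^3$ or $B=\QQ^3$, tying the special geometry of the plane, respectively of the $\tau$-quadric, to a very specific rank-$2$ subbundle of the ambient bundle of $Y$, and then matching the resulting hyperbolic reduction with the conic bundle $X'\to B$ coming from an a priori different geometric construction.
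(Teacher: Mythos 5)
Your overall framework is the same as the paper's: realise both $X\to B$ and $X'\to B$ as hyperbolic reductions of a single rank-$5$ quadric bundle $Y\to B$ along two isotropic line subbundles, and deduce the $K(B)$-birational equivalence at the generic fibre (the paper simply cites \cite[Prop.\ 1.1]{kuznethyperbolic}, which amounts to your Witt-cancellation argument for conics). But as written the proposal has a genuine gap, and you name it yourself: the entire content of the theorem is the construction of the second isotropic subbundle $L'$ and the identification of $(L')^\perp/L'$ with $(\cE',q')$, and this is exactly the step you defer (``the verification should be achieved by comparing\dots'', ``the main obstacle is unquestionably Step two''). Without it there is no proof. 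Moreover, your specific ansatz $Y=q\perp h$ as an \emph{orthogonal direct sum} of $(\cE,q)$ with a twisted hyperbolic plane is not the extension that works. In the paper's explicit constructions the added hyperbolic coordinates are coupled to $\cE$: for C-4/R-62 the $5\times 5$ matrix has off-diagonal entries $q_0$ and $-\tfrac12 x_3$ linking the new rows to the first coordinate of $\cE$, and it is precisely this coupling that ``unfolds'' the corner entry $x_3q_0$ of $q$ (divisibility by a linear form being the translation of ``$X_3$ contains a plane''); similarly for GM-21/K3-35 the entries $a,l_1,l_2$ encode the factorisation $q=l_1l_2$ coming from the $\tau$-quadric. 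With a literal orthogonal sum, the only obvious isotropic line subbundles lie in the hyperbolic summand and their reductions all return $(\cE,q)$; producing an isotropic $\cO(-1)$ whose quotient is the \emph{non-split} bundle $\cE'$ of R-62 or K3-35 requires precisely the non-diagonal extension you have not written down.

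It is also worth noting that the paper handles the ``general $X'$'' direction the opposite way round, which is easier than your route: starting from $X'$, one invokes \cite[Prop.\ 3.6]{BKK}, which produces the quadric threefold bundle $Y$ on $\cG$ directly from the presentation of $\cE'$ as a quotient (sequence \cref{eq:useful-for-hyp-red}), and then observes that this $\cG$ carries the \emph{other} isotropic line subbundle essentially for free, whose reduction is a conic bundle of type C-4 (resp.\ GM-21); \cref{section} then shows that the resulting $X$ acquires a section over a hypersurface, i.e.\ the cubic contains a plane (resp.\ the Gushel--Mukai fourfold contains a $\tau$-quadric). Your direction --- extend the special $X$ and hunt for $L'$ --- is carried out in the paper only via the explicit symmetric matrices above, and that computation is the part your proposal is missing.
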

The situation in the case GM-20/K3-31 is slightly more complicated. In particular, for a general $X'\to \PP^3$ of type K3-31 there are in general six fourfolds of GM-20 type as in \hyperlink{main_theorem_discriminants}{Main Theorem B} i.e.\ having the same discriminant surface as $X'\to \PP^3$ but in general none of them will be related to $X'$ by a sequence of hyperbolic reductions/extensions. However, the relation involving hyperbolic reduction still holds between a codimension one subset of the family K3-31 and a codimension two subset of the family GM-20. Indeed, let GM-20' denote the codimension two family of fourfolds of type GM-20 corresponding to Gushel--Mukai fourfolds containing a $\sigma$-plane. Let
K3-31' be the codimension one subfamily of fourfolds of type K3-31 corresponding to singular complete intersections of two quadrics resolved by the blow up. 

\begin{theorem}
\hypertarget{main_theorem_b_K3-31}{}
General elements of the two families GM-20' and K3-31'
can be associated into pairs $(X,X')$, for which there exists a sixfold  $Y$ with a quadric threefold fibration $Y\to \PP^3$ such that both conic fibrations $X\to \PP^3$ and $X'\to \PP^3$ appear as hyperbolic reductions of $Y\to \PP^3$. 
In particular, the fourfolds $X$ and $X'$ are $K(\PP^3)$-birationally equivalent. 
\end{theorem}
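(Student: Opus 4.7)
The plan is to adapt the hyperbolic reduction strategy used in the previous theorem (for the pairs C-4/R-62 and GM-21/K3-35) to this codimension-restricted setting. Start with a general $X$ in GM-20', namely the blow-up of a Gushel--Mukai fourfold $X_{10}$ containing a $\sigma$-plane along a quartic del Pezzo surface. The associated conic bundle $X \to \PP^3$ is defined by a rank-three orthogonal bundle $(\cE,q)$ on $\PP^3$ whose discriminant is the sextic $\Delta \subset \PP^3$ with $40$ nodes from \cref{summaryTable}. First, extend $(\cE,q)$ by a hyperbolic rank-two form to obtain a rank-five orthogonal bundle $(\tilde\cE,\tilde q)$ with the same discriminant $\Delta$, together with a quadric threefold fibration $Y \subset \PP(\tilde\cE) \to \PP^3$. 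By construction, $X$ is recovered from $Y$ by hyperbolic reduction along one of the two isotropic line subbundles $\ell \subset \tilde\cE$ built into the extension.

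The crux of the argument is to use the $\sigma$-plane geometry of $X_{10}$ to exhibit a second isotropic line subbundle $\ell' \subset \tilde\cE$, in general position with respect to $\ell$, whose hyperbolic reduction produces a conic bundle $X' \to \PP^3$ of type K3-31'. Concretely, the $\sigma$-plane should yield a distinguished subscheme of $Y$ that can be contracted; tracking this contraction should identify $X'$ with the resolution of a singular complete intersection $\QQ^5 \cap \QQ^5 \subset \PP^6$ along a genus-six K3 surface, the singularity of the complete intersection being exactly the condition defining the codimension-one subfamily K3-31'. Once both hyperbolic reductions are in place, the $K(\PP^3)$-birational equivalence of $X$ and $X'$ is automatic: linear projections from $\ell$ and $\ell'$ give birational maps $Y \dashrightarrow X$ and $Y \dashrightarrow X'$ over $\PP^3$, hence a birational map $X \dashrightarrow X'$ over the generic point of $\PP^3$.

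The main obstacle I anticipate is the explicit construction of $\ell'$ and the verification that the reduction $X'$ genuinely lies in K3-31'; this requires a careful analysis of the contracted locus in $Y$ and an interpretation of the resulting singularity of the complete intersection of quadrics in terms of the $\sigma$-plane datum on the Gushel--Mukai side. A closely related difficulty is the bookkeeping of parameter counts: since GM-20' has codimension two in GM-20 while K3-31' has codimension one in K3-31, the correspondence cannot be a bijection, and one must control how many elements of GM-20' correspond to a given general $X' \in$ K3-31' --- essentially the count of $\sigma$-planes on $X_{10}$, or equivalently of compatible isotropic subbundles on $Y$, producing the prescribed $X'$. The word \emph{general} in the statement is designed precisely to accommodate this discrepancy, and verifying that the correspondence is well-defined on general members of both subfamilies is what finally closes the argument.
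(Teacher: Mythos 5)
Your overall philosophy --- realise $X$ and $X'$ as the two hyperbolic reductions of a single rank-five quadric bundle over $\PP^3$, then invoke \cite{kuznethyperbolic} for the birationality --- is the right one, but two steps of your plan do not go through as written. First, the rank-five bundle cannot be a generic rank-two hyperbolic extension of $(\cE,q)$: that operation produces the split bundle $\cO\oplus\cE\oplus\cO$, whereas the reduction that yields a conic bundle of type K3-31 must be performed inside the specific bundle $\cG=\cO\oplus\cO(-1)\oplus\cQ_{\PP^3}(-1)$, chosen by comparing the defining extensions of the two rank-three bundles: by the dual sequence \cref{eq:useful-for-hyp-red}, reduction of $\cG$ along $\cO$ gives the GM-20 bundle $\cE$ and reduction along $\cO(-1)$ gives the K3-31 bundle $\cE'$, which must sit in $0\to\cO(-1)\to\cO\oplus\cQ_{\PP^3}^\vee(1)\to\cE'^\vee\to 0$. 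Since $H^0(\cO(-1)\oplus\cQ_{\PP^3}(-1))=0$, the sequence defining $\cE$ never splits, so $\cE\oplus\cO^{\oplus 2}\not\simeq\cG$; moreover $\cE$ admits no surjection onto $\cQ_{\PP^3}(-1)$, so no embedding of $\cO(-1)$ into your split extension can have a quotient of the required shape. Second, and more seriously, the step you yourself flag as the crux --- producing the second isotropic subbundle $\ell'$ from the $\sigma$-plane --- is left as a hope, and the implication runs the wrong way: in the paper the $\sigma$-plane is a \emph{consequence} of having both isotropic subbundles (it is the section of $X$ over the hypersurface $q(\cN,\cN')=0$ supplied by \cref{section}), whereas deducing the existence of $\cN'$ from the presence of a $\sigma$-plane is precisely the nontrivial content of the statement.

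The paper's proof of \cref{prop:GM20-K331-hypred} sidesteps any direct construction of $\ell'$. It works in the space $H^0(\mathrm{Sym}^2\cG^\vee)$ of quadratic forms on the fixed bundle $\cG$, introduces the loci $\mathcal S_1$ and $\mathcal S_2$ of forms admitting an isotropic $\cO\to\cG$, respectively $\cO(-1)\to\cG$ (of codimensions $1$ and $2$), and argues by irreducibility and matching of codimensions that reduction from $\mathcal S_3=\mathcal S_1\cap\mathcal S_2$ dominates both GM-20$'$ (codimension two, matching the codimension of $\mathcal S_3$ in $\mathcal S_1$) and K3-31$'$ (codimension one, matching the codimension of $\mathcal S_3$ in $\mathcal S_2$). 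This dominance argument is what lets the word ``general'' in the statement do its work and is what resolves, in both directions at once, the bookkeeping discrepancy you worry about; the count of six GM-20 partners concerns a \emph{general} K3-31 fourfold via EPW sextics and is not what controls the correspondence between the primed subfamilies. Your final step, the $K(\PP^3)$-birational equivalence via hyperbolic reduction, is correct as stated.
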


Even though we follow a similar path for the three pairs of examples, both Theorem A and Theorem B will be proved on a case by case analysis due to the specificity of each example.

\medskip

The structure of the paper is as follows. In \cref{generalitiesOnConic} we recall hyperbolic reduction for quadric bundles, and we construct an isometry between the primitive middle cohomology of a conic bundle over a threefold and the anti-invariant part of the middle cohomology of the discriminant double cover. In \cref{sect16nodal,sect40nodal,sect20nodal}, we prove \hyperlink{main_theorem_a}{Main Theorems A} and \hyperlink{main_theorem_b}{B} for the cases C-4 / R-62 , GM-20 / K3-31 and GM-21 / K3-35 respectively. In \cref{Verra} we investigate an example related to EPW quartics
and give a short list of open problems and interesting questions.

\subsection*{Acknowledgements}
The authors wish to thank Sasha Kuznetsov for useful discussions and for suggesting a proof of \cref{prop:cbkuzpart}. We are also grateful to F. Catanese, B. Van Geemen and K.G. O'Grady for useful discussions.

GK is supported by the project Narodowe Centrum Nauki 2018/30/E/ST1/00530. MK is supported by the project Narodowe Centrum Nauki 2018/31/B/ST1/02857. 
MB and LM acknowledge support from the ANR project \emph{FanoHK}, grantANR-20-CE40-0023. EF, GM, and FT are supported by PRIN2020 research
grant ``2020KKWT53” and by PRIN2022 research grant ``2022PEKYBJ”, and are members of INdAM GNSAGA. FT is partially supported by the MIUR Excellence Department Project awarded to Dipartimento di Matematica, Università di Genova, CUP D33C23001110001, and by the Curiosity Driven 2021 Project \emph{Varieties with trivial or negative canonical bundle and the birational geometry of moduli spaces of curves: a constructive approach} - PNR DM 737/2021. 
Funded by the European Union - NextGenerationEU under the National Recovery and Resilience Plan (PNRR) - Mission 4 Education and research - Component 2 From research to business - Investment 1.1 Notice Prin 2022 - DD N. 104 del 2/2/2022, from title "Symplectic varieties: their interplay with Fano manifolds and derived categories", proposal code 2022PEKYBJ – CUP J53D23003840006.

\section{Generalities on conic bundles}
\label{generalitiesOnConic}

\subsection{Hyperbolic reduction}
Given a base scheme $B$, a quadric bundle $X \to B$ of relative dimension $r$ is given by the following data: a line bundle $\cL$ on $B$, a vector bundle $\cE$ of rank $r+2$ on $B$, and an injective morphism 
of vector bundles 
$q:\cL^\vee \to S^2 \cE^\vee$.
Equivalently, $q$ can be seen as a surjective morphism $S^2 \cE \to \cL$ or as a symmetric map 
$\cE\otimes \cL^\vee \to \cE^\vee$, everywhere non zero.
Then $X \subset \PP E$ is the zero locus of $q$.
Following \cite{auel-berna-bolo} we  denote this data by $(\cE,q,\cL)$.

Let $(\cG,q,\cL)$ be a quadric bundle of relative dimension $r+2$, so that $\cG$ has rank $r+4$.
If $\cN \subset \cG$ is a $q$-isotropic subbundle, and $\cN^\perp \subset \cG$ its orthogonal complement (which is locally free, see \cite{auel-berna-bolo}), we can consider the bundle $\cE=\cN^\perp/\cN$ and the induced map $q' : \cE \otimes \cL^\vee \to \cE^\vee$. This defines a quadric bundle $Y\to B$ of relative dimension $r$, with the same discriminant, Clifford algebra, etc.
We call $(\cE,q',\cL)$ the hyperbolic reduction of $(\cG,q,\cL)$ via $\cN$.

Recall from \cite[page 260]{auel-berna-bolo} that there is an exact sequence 
\[
0 \longrightarrow \cN^\perp/\cN \longrightarrow \cG/\cN \longrightarrow \mathcal{H}om(\cN,\cL) \longrightarrow 0,
\]
whose dual, since $\cE=\cN^\perp/\cN$, gives the  sequence
\begin{equation}\label{eq:useful-for-hyp-red}
0 \longrightarrow \cL^\vee \otimes \cN \longrightarrow (\cG/\cN)^\vee \longrightarrow \cE^\vee \longrightarrow 0.
\end{equation}

In the sequel we will try to compare pairs of conic bundles $X\subset\PP(\cE)$ and 
$X'\subset\PP(\cE')$ over the same threefold $B$. For this 
we will exhibit a quadric threefold bundle $Y\subset \PP(\cG)$ from which 
both $X$ and $X'$ can be derived by hyperbolic reduction, say
through isotropic line bundles 
$\cN$ and $\cN'$ of $\cG$. In particular the discriminant
surfaces in $B$ will be the same, and $X$ and $X'$ will be $K(B)$-birationally equivalent \cite{kuznethyperbolic}. 
Of course this is possible only when some special conditions 
are fulfilled. We record a simple observation. 

\begin{lemma}\label{section}
In the previous situation, let $D\subset B$ be the hypersurface over which $\cN$ and $\cN'$ 
are orthogonal in $\cG$. The conic bundles $X$ and $X'$ admit a section over $D$. 
\end{lemma}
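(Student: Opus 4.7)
The plan is to exploit the hypothesis defining $D$ directly. Over $D$ the isotropic line subbundles $\cN$ and $\cN'$ of $\cG$ are mutually orthogonal with respect to $q$, so $\cN'|_D \subset \cN^\perp|_D$. Composing this inclusion with the quotient $\cN^\perp \twoheadrightarrow \cN^\perp/\cN = \cE$ yields a morphism of line bundles $\varphi \colon \cN'|_D \to \cE|_D$. The key point is that the image of $\varphi$ is pointwise $q'$-isotropic: for any local section $s$ of $\cN'$ one has $q(s)=0$, and since $q'$ is induced by $q$ through the quotient $\cN^\perp \twoheadrightarrow \cE$, one gets $q'(\varphi(s))=0$. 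Fibrewise, this means that the image of $\varphi$ lies in the affine cone over $X \subset \PP(\cE)$.

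Next I would argue that $\varphi$ is non-zero as a sheaf map. Indeed, if $X$ and $X'$ are two genuinely distinct hyperbolic reductions then $\cN$ and $\cN'$ are distinct line subbundles of $\cG$, so $\cN'$ is not contained in $\cN$ at the generic point of $D$. Hence $\varphi$ is non-zero, and its saturation produces a line subbundle $\overline{\cN'} \subset \cE|_D$. This saturated subbundle remains $q'$-isotropic, because isotropy cuts out a closed subscheme of $\PP(\cE|_D)$ and the condition is satisfied on a dense open. It therefore defines a morphism $D \to X$ which is a section of $X \to B$ above $D$. Exchanging the roles of $\cN$ and $\cN'$ gives, in exactly the same way, a section of $X' \to B$ over $D$.

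The only genuine technicality is the treatment of the possible vanishing locus of $\varphi$, i.e.\ the points of $D$ at which $\cN$ and $\cN'$ coincide inside $\cG$. This is handled by the saturation step above: any non-zero morphism from a line bundle to a vector bundle factors uniquely through a line subbundle, obtained by dividing out by its divisor of zeros, and the resulting line subbundle inherits isotropy from its generic behaviour. Thus no additional geometric input, beyond the definition of $D$ and the distinction of $\cN$ and $\cN'$, is required.
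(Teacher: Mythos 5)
Your argument is essentially the paper's own proof: over $D$ the orthogonality $q(\cN,\cN')=0$ gives $\cN'|_D\subset \cN^\perp|_D$, and projecting to $\cE=\cN^\perp/\cN$ yields an isotropic line, hence a section of the conic bundle. The only difference is your extra saturation step to handle points where $\cN'$ might fall into $\cN$ — a refinement the paper silently glosses over — so the two proofs coincide in substance.
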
 

\proof By construction, $\cE\simeq \cN^\perp/\cN$. Over $D$, $\cN'$ being orthogonal to $\cN$ injects inside $\cE$ and provides the required section. \qed

\subsection{A general fact about conic bundles over threefolds}

Let $f: X \to B$ be a standard conic bundle over a smooth threefold $B$, that is, a triple $(\cE,q,\cL)$ as above, with $\cE$ a rank 3 vector bundle. The discriminant divisor $\Delta \subset B$ parametrises singular conics, the singularities of $\Delta$ correspond to double lines, while smooth points of $\Delta$ parametrize conics consisting of two lines meeting in one point \cite[Prop. 1.2]{beauvilleprym}.

We assume $\Delta$ to have only a finite number of nodal singularities, and to have a smooth double cover $\sigma: F \to \Delta$ ramified along the singularities. 
In this case, $F$ is at the same time the moduli space of $f$-lines in $X$, and a $0$-dimensional quadric fibration over $\Delta$. Note that $F$ comes naturally with an involution $\iota$, and we can consider the $\iota$-anti-invariant part $H^2(F)^-$ of $H^2(F,\ZZ)$.

Inspired from \cite{beauvilleprym} and by \cite{kuzmail}, we describe an isometry between $H^2(F)^-$ and the primitive part of $H^4(X,\ZZ)$, that we define as follows:
\[
H^4_{pr}(X,\ZZ) := (f^*H^4(B,\ZZ) \oplus f^*H^2(B,\ZZ) \cdot h)^{\perp},
\]
where $h$ is the restriction to $X$ of the relative hyperplane class of $\PP(\cE) \to B$ (we note that $h$ coincides with the anti-canonical divisor $-K_X$ up to twist with elements from the base $B$).

\begin{prop}\label{prop:cbkuzpart}
In the above setting, there is a morphism 
\[
\phi: H^2(F,\ZZ) \longrightarrow H^4(X,\ZZ), 
\]
preserving the Hodge structures, and such that
\[
\phi(a) \cdot \phi(b) = a \cdot (\iota^*b-b).
\]
\end{prop}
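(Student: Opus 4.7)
The plan is to realise $\phi$ as the correspondence induced by the universal incidence variety of $f$-lines, and to derive the intersection formula by a self-correspondence calculation.

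Let $Z\subset F\times X$ be the incidence variety of pairs $(\ell,y)$ with $y\in\ell$. The first projection $\pi\colon Z\to F$ is the $\PP^1$-bundle of lines, while the second projection $p\colon Z\to X$ is birational onto the divisor $W:=f^{-1}(\Delta)$ and fails to be an isomorphism precisely over the curve $R\subset W$ of intersection points of pairs of incident lines. I would define
\[
\phi(a) \;:=\; p_*\pi^*a.
\]
Since $\pi^*$ is tautologically a morphism of Hodge structures and $p_*$ preserves pure Hodge structures after the appropriate Tate twist (by the standard theory of proper pushforwards), $\phi\colon H^2(F,\ZZ)(-1)\to H^4(X,\ZZ)$ is a morphism of Hodge structures of weight $4$, and in particular preserves the Hodge decompositions.

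For the intersection formula, two applications of the projection formula together with Poincar\'e duality give
\[
\int_X \phi(a)\cdot\phi(b) \;=\; \int_F a\cdot \pi_*\,p^*p_*\,\pi^*b,
\]
so it suffices to establish the identity
\[
\pi_*\,p^*p_*\,\pi^*b \;=\; \iota^*b-b \qquad \text{in } H^2(F,\ZZ).
\]
The operator $p^*p_*$ is a self-correspondence on $Z$, to be computed via the fibre product $Z\times_XZ$. Set-theoretically $Z\times_X Z=\Delta_Z\cup A$, where the diagonal $\Delta_Z$ appears because $p$ is only birational (not an isomorphism), and the antidiagonal $A$ parametrises pairs $\bigl((\ell,y),(\iota\ell,y)\bigr)$ with $y=\ell\cap\iota\ell\in R$. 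The antidiagonal is naturally identified with $F$ away from the ramification of $\sigma$, in such a way that its two projections to $Z$ are the section of $\pi$ along $R$ and the composition of this section with $\iota$. Applying excess intersection to the resulting Cartesian square yields two contributions to $p^*p_*\pi^*b$: the antidiagonal gives $\pi^*\iota^*b$, since the two projections are intertwined by $\iota$; the diagonal, of excess codimension one, gives the correction $-\pi^*b$ coming from the first Chern class of the excess normal bundle, which I would identify with the conormal of $R\subset W$ pulled back to $Z$. Applying $\pi_*$ and using that $\pi$ is a $\PP^1$-bundle (so $\pi_*\pi^*=\operatorname{id}$) then produces $\iota^*b-b$, as required.

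The principal obstacle is the clean bookkeeping of the excess intersection on $Z\times_X Z$: since $p$ is neither flat nor a closed embedding, the diagonal contribution must be extracted through a careful identification of the excess normal bundle together with its Chern class to secure the minus sign. A pragmatic way forward is to reduce to the generic situation over a smooth point of $\Delta$, where the fibre of $f$ is a reducible conic and the self-correspondence analysis reduces to Beauville's classical computation for Prym varieties of conic bundles \cite{beauvilleprym}, and then to observe that the nodes of $\Delta$ contribute no extra terms because $\sigma$ is ramified there, so that $\iota$-invariant and anti-invariant classes agree on the ramification locus.
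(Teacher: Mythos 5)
Your definition of $\phi$ and your reduction of the intersection formula to the self-correspondence identity $\pi_*\,p^*p_*\,\pi^* = \iota^* - \mathrm{id}$ coincide, up to notation, with the paper's own strategy (which is itself Beauville's Prym argument); the divergence is in how that identity is established, and there your argument has two concrete problems. The first is degree bookkeeping: since $p_*\colon H^2(Z,\ZZ)\to H^4(X,\ZZ)$ is a Gysin map, the class $p^*p_*\pi^*b$ lives in $H^4(Z,\ZZ)$ and cannot equal $\pi^*\iota^*b-\pi^*b\in H^2(Z,\ZZ)$; moreover for a $\PP^1$-bundle one has $\pi_*\pi^*=0$, not $\mathrm{id}$ (the usable identity is $\pi_*(\xi\cdot\pi^*c)=c$ for the relative hyperplane class). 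The antidiagonal contribution should be $s_*\iota^*b$, where $s\colon F\dashrightarrow Z$ is the rational section $\ell\mapsto(\ell,\ell\cap\iota\ell)$, and the diagonal contribution should be of the form $c_1(L)\cdot\pi^*b$ for a line bundle $L$ whose degree on the fibres of $\pi$ would have to come out to $-1$; only after applying $\pi_*$ does one land on $\iota^*b-b$. These slips are repairable, but the displayed identities are false as written.

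The second problem is the genuine gap: your excess-intersection computation breaks down exactly over the nodes of $\Delta$, which is the whole point of the proposition (the paper describes its proof as Beauville's argument ``opportunely modified to take into account singularities of the discriminant divisor''). Over a ramification point of $\sigma$ the fibre of $f$ is a double line, the point $\ell\cap\iota\ell$ is not defined, and the closure of your antidiagonal $A$ inside $Z\times_XZ$ lands in the diagonal $\Delta_Z$. The two components are therefore not disjoint, the naive additivity of their contributions fails, and your fallback to Beauville's classical computation only covers the locus where $\Delta$ is smooth --- it gives no control over correction terms supported over the nodes, and the assertion that these ``contribute no extra terms because $\sigma$ is ramified there'' is not an argument. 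The paper avoids all of this by first blowing up $X$ along $\tilde{\Delta}$ (the resolved discriminant embedded via the singular points of the fibres): in $X'$ the universal line $Z$ becomes a divisor, so $i^*i_*$ is honestly cup product with $c_1(N_{Z/X'})$, whose degree on the fibres of $p$ is computed to be $-2$ by adjunction from $K_{X'}=\pi^*K_X+E$, while the blow-up decomposition of $H^4(X',\ZZ)$ supplies the term $\pi_{F*}\tilde{\sigma}^*\tilde{\sigma}_*\pi_F^*a=a+\iota^*a$ globally, with the ramification locus handled automatically. If you insist on working directly on $X$, you would need the full Segre-class analysis of $Z\times_XZ$ at the points where $A$ degenerates into $\Delta_Z$, which is substantially harder than the blow-up route.
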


\begin{proof}
The proof of the first part goes along the lines of Beauville's proof of Th\'eor\`eme 2.1 in \cite{beauvilleprym} opportunely modified to take into account singularities of the discriminant divisor \cite{kuzmail}.

So we let $Z$ be the universal $f$-line in $X$, which comes as a correspondence:
\[\xymatrix{
 & Z \ar[dl]_\zeta \ar[dr]^p & \\
 X\ar[dr]_f & & F\ar[dl]_\sigma \\
  & B\supset \Delta &}  
\]
where $p$ is a $\PP^1$-fibration and $\zeta$ is the normalization of the divisor $f^{-1}(\Delta)$. Indeed, we can decompose $\zeta$ via the blow-up of $X$ along a resolution of singularities of $\Delta$ as follows.

We let $\pi_\Delta : \tilde{\Delta} \to \Delta$ be the blow-up of the singular locus, and $\tilde{\sigma}:\tilde{F} \to \tilde{\Delta}$ the 2:1 cover ramified along the exceptional divisor of $\pi_\Delta$. Equivalently, $\pi_F: \tilde{F} \to F$ is the blow-up along the ramification locus of $\sigma$.
Considering in $X$ the subscheme of singular fibres with a marked point allows one to define an embedding $\delta: \tilde{\Delta} \to X$.

We finally consider the blow-up $\pi:X' \to X$ of $X$ along $\tilde{\Delta}$ and note that $\zeta$ factors via an embedding $i:Z \to X'$. We end up with the following diagram:
\[
\xymatrix{
\tilde{F} \ar@/_2pc/[dd]_{\tilde{\sigma}} \ar[d]^{s_E} \ar[r]_{s_Z} \ar@/^2pc/[rr]^{\pi_F} & Z \ar[d]_i \ar[r]^p \ar@/^2.01pc/[dd]^{\zeta} & F \\
E \ar[d]^q \ar[r]^{\varepsilon} & X' \ar[d]_\pi & \\
\tilde{\Delta} \ar[r]^\delta & X, &
}
\]
where $\varepsilon: E \to X'$ is the exceptional divisor of $\pi$.

We define $\phi:=\zeta_* p^*= \pi_* i_* p^*$, and we continue along the same lines of \cite{beauvilleprym}.
Indeed, since $X' \to X$ is the blow-up of a codimension $2$ smooth locus, we have:
\[
H^4(X',\ZZ)=\pi^*H^4(X,\ZZ) \oplus \varepsilon_* q^* H^2(\tilde{\Delta},\ZZ), 
\]
so that for any $y$ in $H^4(X',\ZZ)$ one can write
\[
y = \pi^* \pi_* y - \varepsilon_* q^* q_* \varepsilon^* y.
\]

Consider $a$ in $H^2(F,\ZZ)$, and its image $y=i_*p^* a$ in $H^4(X',\ZZ)$. The latter can be written, using the above diagram, as:

\[
\begin{array}{rl}
i_*p^* a &= \pi^*\pi_* i_* p^* a - \varepsilon_* q^* q_* \varepsilon^* i_* p^* a \\
&= \pi^* \zeta_*p^* a - \varepsilon_* q^* q_* s_{E*} s_Z^* p^* a \\
&= \pi^* \phi(a) - \varepsilon_* q^* \tilde{\sigma}_*\pi_F^* a.
\end{array}
\]
We calculate now $p_*i^* i_* p^* a$ by applying $p_* i^*$ to the above:
\[
\begin{array}{rl}
   p_*i^* i_* p^* a  &=  p_*i^*\pi^* \phi(a) - p_*i^*\varepsilon_* q^* \tilde{\sigma}_*\pi_F^* a  \\
   &= p_* \zeta^* \phi(a) - p_* s_{Z*} s_E^* q^* \tilde{\sigma}^* \pi_F^* a \\
   &= \phi^t \phi(a) - \pi_{F*} \tilde{\sigma}^* \tilde{\sigma}_* \pi_F^* a,
\end{array}
\]
where we denote by $\phi^t:=p_* \xi^*$ the adjoint map to $\phi$.

Remark that $i: Z \to X'$ is a divisorial embedding, so that $i^*i_*$ is nothing but intersecting with the first Chern class of the normal bundle of $Z$ in $X'$. On the one hand, we have $K_{X'}= \pi^* K_X + E$ by the blow-up formula. On the other hand, such canonical class restricts to a trivial divisor on the general fibre of the $\PP^1$-bundle $p: Z \to F$. This gives that the normal bundle of $Z$ in $X'$ has degree $-2$ on the general fibre of $p: Z \to F$. Therefore,

\[
\begin{array}{rl}
    p_* i^* i_* p^* a &= p_* (p^* a \cdot c_1(N_{Z/X'})  \\
    &= a \cdot p_* c_1(N_{Z/X'})\\
    &=-2a. 
\end{array}
\]
We deduce that 
\[
\phi^t \phi(a) = \pi_{F*} \tilde{\sigma}^* \tilde{\sigma}_* \pi_F^* a - 2a.
\]
Notice that $\tilde{\sigma}$ being a double cover with involution $\tilde{\iota}$, the first term of the right hand side of the above equality can be rewritten as
\[
\pi_{F*} \tilde{\sigma}^* \tilde{\sigma}_* \pi_F^* a = \pi_{F*}(\pi_F^* a + \tilde{\iota}^* \pi_F^* a) = a + \iota^* a.
\]
We can now write:
\[
\phi(a) \cdot \phi(b)= a \cdot \phi^t\phi(b) = a \cdot (\pi_{F*}\tilde{\sigma}^* \tilde{\sigma}_* \pi_F^* b - 2b)=a \cdot (\iota^*b-b),
\]
and this concludes the proof.\end{proof}

Now remark that if $a$ is in $H^4(B,\ZZ)$, and  $j: \Delta \to B$ denotes the inclusion,  
$$
\phi^t(f^*a)= p_*\xi^*f^* a = \sigma^* j^* a = 0,
$$
so that $\phi^t$ vanishes on $f^* H^4(B,\ZZ)$.
If $b$ is in $H^2(B,\ZZ)$, then
$$
\phi^t((f^*b).h)=p_* \xi^* ((f^*b).h)=
 p_*((p^* \sigma^* j^* b).(\xi^* h))= (\sigma^* j^* b).(p_*\xi^* h),
$$
where the last step is given by the projection formula. Since $p_* \xi^* = {\pi_F}_* \delta^* \tilde{\sigma}^*$, it follows that $\phi^t$ sends $f^* H^2(B,\ZZ).h$ to the $\iota$-invariant part of $H^2(F,\ZZ)$.

\begin{prop}\label{prop:cbisometry}
Up to multiplication by $-2$, the morphism $\phi$ induces an isometry between $H^2(F)^-$ and $H^4_{pr}(X,\ZZ)$.
\end{prop}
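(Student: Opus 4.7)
The plan is to leverage the bilinear identity $\phi(a) \cdot \phi(b) = a \cdot (\iota^* b - b)$ established in the previous proposition, together with the behavior of the adjoint $\phi^t$ on the two pieces that generate $H^4(X,\ZZ)/H^4_{pr}(X,\ZZ)$. First I would restrict $\phi$ to $H^2(F)^- \subset H^2(F,\ZZ)$: on this subspace $\iota^* b = -b$, so the identity becomes $\phi(a) \cdot \phi(b) = -2 \, a \cdot b$, which is exactly the claimed similitude of ratio $-2$.

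Second, I would verify that the restriction of $\phi$ to $H^2(F)^-$ lands in $H^4_{pr}(X,\ZZ)$. By adjunction, for any class $c \in H^4(X,\ZZ)$ one has $\phi(a) \cdot c = a \cdot \phi^t(c)$. The two computations carried out just before the statement show that $\phi^t$ vanishes on $f^* H^4(B,\ZZ)$ and that it maps $f^* H^2(B,\ZZ) \cdot h$ into the $\iota$-invariant part $H^2(F)^+$. Since $\iota$ is an isometry of the intersection pairing on $H^2(F,\ZZ)$, the subspaces $H^2(F)^+$ and $H^2(F)^-$ are mutually orthogonal, and the required orthogonality of $\phi(a)$ to $f^* H^4(B,\ZZ) \oplus f^* H^2(B,\ZZ)\cdot h$ follows.

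Third, I would show that this restriction is actually a bijection (rationally) onto $H^4_{pr}(X,\ZZ)$. Injectivity over $\QQ$ is immediate from the similitude formula of the first step, since the intersection form on $H^2(F,\QQ)$ is non-degenerate and the $\iota$-eigenspace decomposition is orthogonal, so the restriction of the form to $H^2(F)^-_\QQ$ is non-degenerate. For surjectivity, the natural route is to establish, in the spirit of Beauville's analysis in \cite{beauvilleprym}, a direct sum decomposition
\[
H^4(X,\QQ) \;=\; f^* H^4(B,\QQ) \;\oplus\; f^* H^2(B,\QQ)\cdot h \;\oplus\; \phi\bigl(H^2(F,\QQ)^-\bigr),
\]
from which the surjectivity onto $H^4_{pr}(X,\QQ)$ is immediate. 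To obtain this decomposition one applies the decomposition theorem to $f : X \to B$, or equivalently combines the Leray spectral sequence of the $\PP^1$-fibration $p : Z \to F$ with the blow-up formula for $\pi : X' \to X$ along $\tilde\Delta$ used in the previous proof.

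The main obstacle I expect is precisely the control of the decomposition theorem at the nodes of $\Delta$: one has to check that the ramification of $\sigma : F \to \Delta$ over the singular locus does not produce extra summands in $Rf_* \QQ_X$ beyond the expected three. This is handled by first running the smooth-discriminant argument on the conic bundle $\zeta : Z \to X'$ over the resolved discriminant $\tilde\Delta$ (where $p : Z \to \tilde F$ is a genuine $\PP^1$-bundle and $\tilde\sigma : \tilde F \to \tilde\Delta$ is étale away from a simple ramification divisor), and then pushing the resulting decomposition forward along $\pi_F$ and $\pi$. The surjectivity claim then follows at the rational level, and the statement of the proposition is an equality of lattices up to the scalar factor $-2$.
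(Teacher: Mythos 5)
Your first two steps and the injectivity argument coincide with the paper's own proof: the similitude $\phi(a)\cdot\phi(b)=-2\,a\cdot b$ on $H^2(F)^-$ from \cref{prop:cbkuzpart}, non-degeneracy of the intersection form there, and the use of the two adjoint computations ($\phi^t$ kills $f^*H^4(B,\ZZ)$ and sends $f^*H^2(B,\ZZ)\cdot h$ into $H^2(F)^+$) to place $\phi(H^2(F)^-)$ inside $H^4_{pr}(X,\ZZ)$ --- your formulation of this last point via orthogonality of the $\iota$-eigenspaces is in fact cleaner than the paper's. Where you genuinely diverge is surjectivity. The paper does not invoke the decomposition theorem: it sets $V=X\setminus f^{-1}(\Delta)=X'\setminus(E\cup Z)$, notes that primitive classes die on $V$ (a smooth conic bundle over $B\setminus\Delta$), and uses the Leray spectral sequence of the open immersion $V\hookrightarrow X'$ together with Deligne's weight argument for complements of normal crossing divisors to show that the Gysin map $(i_*,\varepsilon_*):H^2(Z,\ZZ)\oplus H^2(E,\ZZ)\to H^4(X',\ZZ)$ hits everything primitive; pushing down by $\pi_*$ and discarding the summands killed by $\pi_*\varepsilon_*$ and $\pi_*i_*s_{Z*}$ leaves exactly $p^*H^2(F,\ZZ)$, giving the surjection $\psi:H^2(F,\ZZ)\to H^4_{pr}(X,\ZZ)$ with integral coefficients. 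Your route via the decomposition theorem for $f$ (with the resolution of $\Delta$ handling the nodes) is viable and more conceptual, and the rank count matches, so it buys a shorter argument over $\QQ$ at the price of heavier machinery.

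The one genuine gap is the coefficient ring. As written, your third step only produces the direct sum decomposition of $H^4(X,\QQ)$, hence surjectivity of $\psi$ over $\QQ$; this exhibits $\phi(H^2(F,\ZZ)^-)$ merely as a finite-index sublattice of $H^4_{pr}(X,\ZZ)$, which is not yet the integral isometry (up to the factor $-2$) that the proposition asserts. You would still need to show the index is $1$, for instance by running the excision/Gysin argument with $\ZZ$-coefficients as the paper does, or by a separate discriminant comparison. Relatedly, your fallback suggestion of ``Leray for $p:Z\to F$ plus the blow-up formula for $\pi$'' is not sufficient on its own: the blow-up formula expresses $H^4(X')$ in terms of $H^4(X)$ and $H^2(\tilde\Delta)$, but one still needs the localization input that primitive classes are supported on $E\cup Z$, which is precisely what the Gysin sequence for the open complement supplies.
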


\begin{proof}
First of all, by \cref{prop:cbkuzpart}, the map $\phi$ satisfies \[
\phi(a) \cdot \phi(b) = -2(a \cdot b), \text{ for any } a,b \in H^2(F)^-.
\]
Since the intersection pairing on $H^2(F)^-$ is non-degenerate, the restriction of $\phi$ is injective. 

Consider the projection $pr: H^4(X,\ZZ) \to H^4_{pr}(X,\ZZ)$, and the composition $\psi:=pr \circ \phi$. The kernel $K$ of $pr$ is generated by $f^*H^4(B,\ZZ)$ and $f^*H^2(B,\ZZ) \cdot h$,
and by the considerations above we have that $K$ lies in $\phi(H^2(F)^+)$. Hence $K \cap \phi(H^2(F)^-) = 0$, so that the restricted composition $\psi_{\vert H^2(F)^-}$ is injective.

We set $V:=X \setminus f^{-1}(\Delta)$. Recalling that $\tilde{\Delta}$ is contained in $f^{-1}(\Delta)$, we easily see that
\[V=X \setminus f^{-1}(\Delta)=X' \setminus (E \cup Z)\]
The Leray spectral sequence for the inclusion of $j: V \to X'$ reads
\[
E^{p,q}_2 = H^p(X',R^qj_* \ZZ) \Rightarrow H^{p+q}(V).
\]
When restricted to the primitive part of the cohomology, we have that $H^4_{pr}(V)=0$, so that restricting the above spectral sequence to the primitive cohomology yields $E^{4,0}_{pr,\infty}=0$. 
Moreover, $d_2 : E^{2,1}_2 \to E^{4,0}_2$  identifies with the Gysin morphism
\[
(i_*,\varepsilon_*) : H^2(Z,\ZZ) \oplus H^2(E,\ZZ) \longrightarrow H^4(X',\ZZ)
\]
and a standard argument in Hodge theory (\cite[3.2.13]{deligne}) gives that the $d_r$ are trivial for $n \geq 3$, since $V$ is the complement of a normal crossing divisor. Hence $d_2$ is surjective on the primitive part.
We end up with a surjective map
\[
\gamma: H^2(Z,\ZZ) \oplus H^2(E,\ZZ) \xrightarrow{i_*,\varepsilon_*} H^4(X',\ZZ) \xrightarrow{pr'} H^4_{pr}(X',\ZZ) \xrightarrow{\pi_*} H^4_{pr}(X,\ZZ).
\]
First, $\pi_*$ is surjective and $\pi_* \varepsilon_*=0$, so that $\gamma$ is trivial on $H^2(E,\ZZ)$ and hence surjective when restricted to $H^2(Z,\ZZ)$.
Second, $H^2(Z,\ZZ)=p^*H^2(F,\ZZ) \oplus p^* H^0(F,\ZZ).l$, where $l$ is the relative hyperplane of the projective bundle $Z \to F$. Since $\pi_* i_* {s_Z}_*= \pi_* \varepsilon_* {s_E}_* =0$, we deduce that $\gamma$ is also trivial on $p^*H^0(F,\ZZ).l$. Therefore
\[\psi=\gamma \circ p^*: H^2(F,\ZZ) \to H^4_{pr}(X',\ZZ)
\]
is surjective.

Finally, the kernel of $\psi$ contains $H^2(F)^+$, but it does not contain any non-trivial anti-invariant element. Hence, $H^2(F)^+$ coincides with the kernel of $\psi$, and the claim follows.
\end{proof}

\section{16-nodal quintics and cubic fourfolds} 
\label{sect16nodal}

\subsection{Blow-up of a cubic along a line and nodal quintics}
Let $X_3 \subset \PP^5$ be a smooth cubic fourfold, and $\ell \subset X$ a general line. The projection $X_3 \dashrightarrow \PP^3$ from $\ell$ is a (rational) conic bundle structure on $X_3$. If $X \to X_3$ is the blow-up along $\ell$, we obtain a conic bundle $X \to \PP^3$ whose discriminant $\Delta$ is a quintic surface with $16$ nodes. Such quintics and their discriminant double covers have many interesting features. 
    
First of all, given any quintic with an  even set of $16$ nodes one can present it as $F/\iota$, 
where $\iota$ is an involution of a surface $F$ of general type. According to Beauville \cite{beauville} there is a unique irreducible family of such quintics.

On the other hand, for $\ell\subset X_3$ a line in a cubic fourfold, the set of lines meeting $\ell$ is in general a smooth surface $F$ of general type,  with a natural involution $\iota$ whose quotient is the $16$-nodal quintic $\Delta$ given by the discriminant of the conic bundle $X \to \PP^3$. As shown by Catanese, \cite{ca}, any $16$-nodal quintic can be obtained this way. 

In \cite{hu21} the interplay between $F \to \Delta$, the K3 structure of $X_3$ and the \HK{} Fano variety of lines $F(X_3)$ are extensively studied. Assertions  (5) and (6) from \hyperlink{main_theorem_b}{Main Theorem B} are proved there for  this particular case.

\begin{prop}[{\cite{hu21}}]\label{prop:huy-for-C4}
Let $X \to \PP^3$ be the conic bundle given by the blow-up of a cubic fourfold $X_3$ along a line, and $F \to \Delta$ the discriminant double cover.
 
There is an isometry of Hodge structures between the anti-invariant part of $H^2(F)^{-}$ and the primitive part of $H^4(X,\ZZ)$, and, henceforth with $H^2(F(X_3),\ZZ)$ endowed with the Beauville--Bogomolov form.  In particular $H^2(F)^-$ has a K3 Hodge structure of signature $(2,21)$.

The surface $F$ is smooth, minimal, of general type,  and algebraically simply connected. 
Moreover:
$$\chi(\cO_F)=6, \quad  q(F)=0, \quad p_g(F)=5, \quad e(F)=62, \quad  b_2(F)=60,\quad c^2_1(F)=10.$$
\end{prop}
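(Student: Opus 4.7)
The first isometry, between $H^2(F)^{-}$ and the primitive cohomology $H^4_{pr}(X,\ZZ)$ of the conic bundle $f \colon X \to \PP^3$, is an immediate application of \cref{prop:cbisometry} (up to the factor $-2$), once one verifies the setup: for a general line $\ell$, the discriminant $\Delta$ is a 16-nodal quintic all of whose singularities are ordinary double points, and the double cover $F \to \Delta$ is smooth.

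To obtain the identification with $H^2(F(X_3),\ZZ)$ endowed with the Beauville--Bogomolov form, the plan is to combine the blow-up formula for $\pi \colon X \to X_3$ along the line $\ell$,
\[
H^4(X,\ZZ) \cong \pi^* H^4(X_3,\ZZ) \oplus E\cdot \pi^* H^2(\ell,\ZZ) \oplus E^2 \cdot \pi^* H^0(\ell,\ZZ),
\]
with the Beauville--Donagi Hodge isometry between $H^4(X_3,\ZZ)_{\mathrm{prim}}(1)$ and the primitive part of $H^2(F(X_3),\ZZ)$. Specifically, knowing $b_4(X) = 25$, I would identify the rank-two subspace $f^* H^4(\PP^3) \oplus f^* H^2(\PP^3)\cdot h$ inside the blow-up decomposition, check that its orthogonal corresponds via $\pi_*$ to $H^4_{\mathrm{prim}}(X_3,\ZZ)$ together with a line generated by the Pl\"ucker-type class coming from the $E^2$ summand, and verify that the resulting intersection pairing matches the BBF form on $H^2(F(X_3),\ZZ)$ of rank $23$. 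This identification is the core content of \cite{hu21}.

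The numerical invariants of $F$ follow by realising $F$ concretely as the surface of lines of $X_3$ meeting $\ell$, embedded in $F(X_3)$ as the zero locus of a natural rank-two bundle (obtained by evaluating the tautological quotient on $G(2,6)$ against $\ell$). Smoothness for generic $\ell$ is a standard transversality statement. Adjunction in the \HK{} fourfold $F(X_3)$, together with the Koszul resolution of this rank-two bundle, yields $K_F^2 = 10$ by a Chern class computation; Hirzebruch--Riemann--Roch then gives $\chi(\cO_F) = 6$, whence $p_g(F) = 5$ and, via Lefschetz applied to $F \hookrightarrow F(X_3)$, also $q(F) = 0$ and algebraic simple connectedness. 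Noether's formula produces $e(F) = 12\chi(\cO_F) - K_F^2 = 62$, and then $b_2(F) = e(F) - 2 = 60$.

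The main difficulty lies in the precise matching of the intersection pairing on $H^4_{pr}(X,\ZZ)$ with the Beauville--Bogomolov form on $H^2(F(X_3),\ZZ)$, and in tracking the Pl\"ucker class through the blow-up decomposition: these bookkeeping computations (balancing the factor $-2$ from \cref{prop:cbisometry} against the BBF normalisation) are the technical heart of the argument and are carried out in detail in \cite{hu21}.
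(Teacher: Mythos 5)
The paper does not actually prove this statement: it is quoted from \cite{hu21}, and the only thing added is the remark that the isometry $H^2(F)^-\simeq H^4_{pr}(X,\ZZ)$ can alternatively be deduced from \cref{prop:cbisometry}. Your first paragraph, and your decision to defer the comparison with the Beauville--Bogomolov form on $H^2(F(X_3),\ZZ)$ to \cite{hu21}, are therefore consistent with what the paper does.

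The part you argue independently, however, contains a genuine gap. The surface $F$ is the surface of lines of $X_3$ meeting $\ell$, i.e.\ the intersection of $F(X_3)\subset\Gr(2,6)$ with the Schubert locus $\{U: U\cap U_\ell\neq 0\}$. That locus is the degeneracy locus of the rank-$2$-to-rank-$4$ map $U_\ell\otimes\cO\to\cQ$ and has codimension $3$ in $\Gr(2,6)$; since $\ell\subset X_3$, its intersection with the fourfold $F(X_3)$ is an \emph{excess} intersection of dimension $2$. So $F$ is not the transverse zero locus of a section of a rank-two bundle on $F(X_3)$, there is no ``evaluation of the tautological quotient against $\ell$'' producing one, and the proposed Koszul/Chern-class computation of $K_F^2$ and $\chi(\cO_F)$ does not run. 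The Lefschetz argument for $q(F)=0$ and simple connectedness fails for the same reason; the paper explicitly notes in \cref{openProblems} that Lefschetz works in the other cases but \emph{not} for C-4, where topological simple connectedness is due to Ottem in the appendix of \cite{hu21}, while for the isomorphic surface $F'$ of type R-62 algebraic simple connectedness is obtained from Xiao's inequality $3K_{F}^2<8(\chi(\cO_{F})-2)$ (\cref{cor:F-Simply-conn-C4}). The invariants themselves are most directly obtained from the double cover of the $16$-nodal quintic, exactly as the paper does in the sextic and quartic cases: a smooth quintic has $e=55$ and $K^2=5$, so $e(\Delta)=55-16=39$ and $e(F)=2\cdot 39-16=62$; since the branch locus has codimension two, $K_F=\sigma^*\omega_\Delta$ is ample (whence minimality and general type) with $K_F^2=2\cdot 5=10$; Noether gives $\chi(\cO_F)=6$, hence $p_g=5$, $q=0$ and $b_2=60$. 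Alternatively, $F$ can be realised as the zero locus of a section of the rank-\emph{three} bundle $S^2\cF^\vee$ on the Grassmann bundle $\Gr(2,\cE)$ over $\PP^3$ (with $\cE=\cO^{\oplus 2}\oplus\cO(-1)$), in parallel with the paper's treatment of GM-20, and the invariants computed from that presentation.
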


Note that the isometry between $H^2(F)^-$ and $H^4_{pr}(X,\ZZ)$ can also be proved as a consequence of \cref{prop:cbisometry}.

The variety $X$ appears in the list of Fano fourfolds of K3 type obtained in \cite{bfmt}, as item C-4: it is the zero locus in $ \PP^3 \times \PP^5$ of a general section of  $\cO(1,2) \oplus \cQ_{\PP^3}(0,1)$. Indeed the projection to $\PP^5$ is the blow-up of a cubic fourfold $X_3$ along a line $l$, and the projection to $\PP^3$ is the residual conic bundle. 

\subsection{Another conic bundle with quintic discriminant}
Let $X'$ in $\PP^3 \times G(2,7)$ be the zero locus of a general section of $ \cO(0,1)^ {\oplus 2} \oplus \cO(1,1) \oplus \cQ_{\PP^3}\boxtimes \cU^{\vee}_{G(2,7)}$. As shown in \cite{bfmt}, case R-62, the projection $\pi_1 : X' \to \PP^3$ is also a 
conic bundle whose discriminant $\Delta'$ is a $16$-nodal quintic. 
The projection $X' \to \Gr(2,7)$ 
is birational, contracts exactly a plane and so its image is a singular fourfold, that can be seen as a $(2,2)$-divisor in $\PP^2 \times \PP^3$.

Consider the discriminant double cover $F' \to \Delta'$, then $F'$ is a variety of lines in the fibres of $\pi_1$. By Catanese's results on nodal quintics and their double covers, $F' \to \Delta'$ must be isomorphic to some $F \to \Delta$ from a conic bundle of type C-4. We provide an explicit construction of this identification, showing how the family R-62 can be interpreted as a special subfamily of C-4.

Because of the 
factor $\cQ_{\PP^3}\boxtimes \cU^{\vee}_{\Gr(2,7)}$, we are reduced to a $\Gr(2,4)$-bundle 
over $\PP^3$, namely $\Gr(2,\cO(-1)\oplus \cO^{\oplus 3})$. Lines in the $\Gr(2,4)$ fibres are parametrised
by points in $\Fl(1,3,4)$, so we consider the fibre bundle $\Fl(1,3,\cO(-1)\oplus \cO^{\oplus 3})$ and its tautological bundles $U_1\subset U_3$. We get the following.

\begin{prop}
The variety $F'$ of $\pi_1$-lines in $X'$ in the flag bundle over $\PP^3$ $\Fl(1,3,\cO(-1)\oplus \cO^{\oplus 3})$  is the zero locus of a general section of the rank six vector bundle $(U_1\wedge U_3)^\vee\otimes (\cO(1)\oplus \cO^{\oplus 2})$. It is a smooth minimal surface of general 
type, admitting an involution whose quotient is the discriminant 16-nodal quintic $\Delta'$. 
\end{prop}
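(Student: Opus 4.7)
The plan is to exhibit $F'$ as an incidence subvariety in the relative flag bundle $\Fl(1,3,\cO(-1)\oplus\cO^{\oplus 3})$ over $\PP^3$, paralleling the presentation of $X'$ inside the Grassmannian bundle $\Gr(2,\cO(-1)\oplus\cO^{\oplus 3})$. To justify the latter, I would start from the observation that a general section of $\cQ_{\PP^3}\boxtimes\cU^\vee_{\Gr(2,7)}$ corresponds to a surjective morphism $\cO_{\PP^3}^{\oplus 7}\twoheadrightarrow \cQ_{\PP^3}$ whose kernel $\cK$ has rank $4$, first Chern class $-h$, and vanishing higher Chern classes (a direct computation using $c(\cK)=1/c(\cQ_{\PP^3})$). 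Using the Euler sequence on $\PP^3$ one checks that $H^1(\cK(k))=H^2(\cK(k))=0$ for all $k$, so Horrocks' splitting criterion together with the Chern class computation identifies $\cK$ with $\cO(-1)\oplus\cO^{\oplus 3}$. Once this reduction is made, the remaining three defining equations of $X'$---two of type $\cO(0,1)$ and one of type $\cO(1,1)$---act fiberwise as three Pl\"ucker-linear hyperplanes in $\Gr(2,4)\subset\PP^5$, producing a conic in each fiber over $\PP^3$ and confirming that $\pi_1$ is a conic bundle.

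Lines in a single $\Gr(2,4)$ are parametrised by $\Fl(1,3,4)$: a flag $V_1\subset V_3$ determines the line $\PP(V_1\wedge V_3)$ in the Pl\"ucker space, with $V_1\wedge V_3\cong V_1\otimes(V_3/V_1)$ of dimension $2$. A global Pl\"ucker-linear form twisted by $\cO_{\PP^3}(k)$ vanishes identically on such a line precisely when its restriction is the zero section of $(U_1\wedge U_3)^\vee\otimes\cO_{\PP^3}(k)$ on the flag bundle. Applying this to the three defining hyperplane conditions (with $k=0,0,1$), the locus $F'$ of $\pi_1$-lines is cut out in $\Fl(1,3,\cO(-1)\oplus\cO^{\oplus 3})$ by a canonical global section of the rank-$6$ bundle $(U_1\wedge U_3)^\vee\otimes(\cO(1)\oplus\cO^{\oplus 2})$ on the $8$-dimensional base, the expected codimension.

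Smoothness and the correct dimension~$2$ then follow from Bertini, since each summand of $(\cO(1)\oplus\cO^{\oplus 2})\otimes(U_1\wedge U_3)^\vee$ is globally generated and the defining section inherits genericity from that of $X'$. The natural involution on $F'$ swaps the two components of each reducible conic in the family, and its quotient is the locus $\Delta'\subset\PP^3$ of singular fibers of $\pi_1$, which by the standard discriminant formula for a conic bundle is a quintic with $16$ nodes (this matching the known description via \cite{ca}). Minimality and general type will follow by computing $K_{F'}$ via adjunction on the flag bundle, using the Chern classes of $U_1,U_3$ and of the rank-$6$ defining bundle, together with the relative canonical of $\Fl(1,3,\cO(-1)\oplus\cO^{\oplus 3})\to\PP^3$. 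The main obstacle I foresee is the splitting step for $\cK$: verifying the vanishing of all intermediate cohomology on $\PP^3$ so that Horrocks applies; after that, the incidence description and the Bertini argument are essentially routine.
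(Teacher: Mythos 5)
Your route to the zero-locus description is essentially the paper's: the reduction to the Grassmannian bundle $\Gr(2,\cO(-1)\oplus\cO^{\oplus 3})$ over $\PP^3$, the parametrisation of lines in the $\Gr(2,4)$-fibres by the relative flag bundle, and the identification of the three Pl\"ucker-linear conditions with a section of $(U_1\wedge U_3)^\vee\otimes(\cO(1)\oplus\cO^{\oplus 2})$ all match the discussion the paper places just before the proposition (which it treats as immediate from the construction). Your Horrocks argument for the splitting of the kernel $\cK$ of $V_7\otimes\cO\to\cQ_{\PP^3}$ is more than the paper supplies, and it is sound, though you should note that Chern classes plus splitting do not alone pin down the type: you also need $\cK\subset V_7\otimes\cO$ to force all summands to have non-positive degree, whence $\cO(-1)\oplus\cO^{\oplus 3}$. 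Your Bertini step is fine provided you observe, as you implicitly do, that the sections of $(U_1\wedge U_3)^\vee$ (and its twist) arising from $\wedge^2V_7^\vee$ generate the bundle at every point, so genericity of the data defining $X'$ suffices.

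The one genuine gap is minimality. You propose to ``compute $K_{F'}$ via adjunction on the flag bundle,'' but knowing the divisor class of $K_{F'}$ does not by itself rule out $(-1)$-curves. The paper's actual proof is exactly this missing step and nothing else: since $F'\to\Delta'$ is a finite double cover of a quintic ramified only over the $16$ nodes, $K_{F'}$ is the pullback of $K_{\Delta'}=\cO_{\Delta'}(1)$, hence ample, which gives minimality (and general type) at once, as in \cite{hu21}. You should replace your adjunction plan by this observation, or supplement the adjunction computation with the identification of $K_{F'}$ as the pullback of an ample class under the finite map $F'\to\Delta'\subset\PP^3$.
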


\begin{proof}
There only remains to prove that $F'$ is minimal, but it is enough to remark that its canonical divisor is ample, as a double cover of a quintic surface ramified in $16$ nodes \cite{hu21}.
\end{proof}

Note that $U_1\wedge U_3\simeq U_1\otimes (U_3/U_1)$ is an irreducible homogeneous rank two  vector bundle over the flag manifold. Using the Bott--Borel--Weil theorem we can then compute cohomologies with the aid of \cite{M2}, and deduce following the numerical invariants of $F'$.

\begin{prop}\label{prop:inv-of-F-for-cubic}
The numerical invariants of $F'$ are 
$$\chi(\cO_{F'})=6, \quad e(F')=62, \quad K_{F'}^2=10, \quad \chi(-K_{F'})=16.$$
\end{prop}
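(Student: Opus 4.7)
The plan is to extract all four invariants from a single Koszul/Chern-class calculation on the ambient flag bundle $Y := \Fl(1,3,\cO(-1)\oplus \cO^{\oplus 3})$ over $\PP^3$, which is smooth and projective of dimension $8$. Set $\cE := (U_1 \wedge U_3)^\vee \otimes (\cO(1)\oplus \cO^{\oplus 2})$, so that $F' \subset Y$ is the zero locus of a regular section of the rank six bundle $\cE$. By adjunction, $K_{F'} = (K_Y + c_1(\cE))|_{F'}$, and if we set $L := -K_Y - c_1(\cE) \in \mathrm{Pic}(Y)$ then $-K_{F'} = L|_{F'}$.

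Next I would compute $\chi(\cO_{F'})$ and $\chi(-K_{F'})$ from the Koszul resolution
\[
0 \to \wedge^6 \cE^\vee \to \wedge^5 \cE^\vee \to \cdots \to \cE^\vee \to \cO_Y \to \cO_{F'} \to 0,
\]
and its twist by $L$. Each $\wedge^i \cE^\vee$ decomposes into irreducible homogeneous bundles on the relative flag (using $U_1 \wedge U_3 \simeq U_1 \otimes (U_3/U_1)$ and Schur plethysm) twisted by line bundles pulled back from $\PP^3$; by Bott--Borel--Weil applied fibrewise, the push-forwards to $\PP^3$ are explicit direct sums of line bundles on $\PP^3$, whose cohomology is immediate. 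These computations can be implemented in Macaulay2 exactly as indicated before the statement, and produce $\chi(\cO_{F'}) = 6$ and $\chi(-K_{F'}) = 16$.

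For $K_{F'}^2$ and $e(F')$, I would use the class $[F'] = c_6(\cE) \cap [Y]$ together with the conormal sequence $0 \to T_{F'} \to T_Y|_{F'} \to \cE|_{F'} \to 0$, which gives $c(T_{F'}) = c(T_Y|_{F'})/c(\cE|_{F'})$. The self-intersection $K_{F'}^2$ then equals the intersection number $(K_Y + c_1(\cE))^2 \cdot c_6(\cE)$ on $Y$, and $e(F')$ is the degree of $c_2(T_{F'}) \cap [F']$ expressed analogously on $Y$. Both intersection numbers can be evaluated by the Chern-class calculus on the flag bundle (Grothendieck relations for $U_1 \subset U_3$ together with the Chern classes of $\cO(-1)\oplus \cO^{\oplus 3}$), giving $K_{F'}^2 = 10$ and $e(F') = 62$. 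As a sanity check, Noether's formula $12\chi(\cO_{F'}) = K_{F'}^2 + e(F')$ yields $72 = 10+62$, and Riemann--Roch gives $\chi(-K_{F'}) = \chi(\cO_{F'}) + K_{F'}^2 = 6+10 = 16$, consistent with the previous step.

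The only real obstacle is computational bookkeeping: the exterior powers $\wedge^i\cE^\vee$ contain many irreducible summands after Schur decomposition, and each must be identified on the relative flag so that BBW can be applied. This is routine but tedious, and is the reason the computation is naturally delegated to Macaulay2; there is no conceptual difficulty beyond organising the resulting enumeration.
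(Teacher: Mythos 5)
Your approach coincides with the paper's: the authors likewise take the description of $F'$ as the zero locus of a general section of the rank six bundle $(U_1\wedge U_3)^\vee\otimes (\cO(1)\oplus \cO^{\oplus 2})$ on the eight-dimensional flag bundle $\Fl(1,3,\cO(-1)\oplus \cO^{\oplus 3})$ and compute the invariants by Chern-class calculus and Bott--Borel--Weil, delegating the bookkeeping to Macaulay2. Your added consistency checks via Noether's formula and Riemann--Roch are correct and a welcome confirmation, but the method is the same.
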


\begin{corollary}\label{cor:F-Simply-conn-C4}
The surface $F'$ is algebraically simply connected.
\end{corollary}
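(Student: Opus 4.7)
The plan is to identify the pair $F' \to \Delta'$ with a discriminant double cover of type C-4, and then invoke \cref{prop:huy-for-C4} to transport simple connectedness.

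First, I would combine the preceding proposition with \cref{prop:inv-of-F-for-cubic} to confirm that $\Delta'$ is a quintic surface in $\PP^3$ with an even set of $16$ nodes, and that $F' \to \Delta'$ is precisely the double cover canonically associated with this even set, namely the discriminant double cover of the conic bundle $X' \to \PP^3$. This is where the construction of $F'$ as the variety of $\pi_1$-lines in $X'$ matters: it realises $F'$ as the $0$-dimensional quadric fibration over $\Delta'$ arising from the conic bundle, so it coincides with the canonical double cover attached to the even set of nodes.

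Next, I would apply Catanese's theorem recalled in the introduction (\cite{ca}, see \cite[Prop.~115]{cataneseal}): every quintic surface in $\PP^3$ with an even set of $16$ nodes arises, together with its canonical double cover, from the projection of a cubic fourfold $X_3 \subset \PP^5$ from a line $\ell \subset X_3$, the double cover being identified with the surface of lines in $X_3$ that meet $\ell$. Applied to $\Delta'$, this produces a pair $(X_3,\ell)$ and an isomorphism $F' \simeq F$ where $F \to \Delta$ is the discriminant double cover of a C-4 type conic bundle. By \cref{prop:huy-for-C4}, the surface $F$ is algebraically simply connected, hence so is $F'$.

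The only non-formal point — and the single potential obstacle — is the uniqueness (up to canonical isomorphism) of the double cover determined by an even set of nodes, which is what makes the identification with a C-4 family member unambiguous. This is classical, and once granted, the corollary follows immediately. A direct proof via a Lefschetz-type theorem on the flag bundle $\Fl(1,3,\cO(-1)\oplus \cO^{\oplus 3})$ would be possible but would require checking sufficient positivity of the non-ample homogeneous bundle $(U_1\wedge U_3)^\vee\otimes (\cO(1)\oplus \cO^{\oplus 2})$, which is less convenient than the route via Catanese and Huybrechts.
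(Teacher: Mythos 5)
Your argument is correct, but it is not the route the paper takes. The paper's proof is a one-line application of Xiao's criterion: by \cref{prop:inv-of-F-for-cubic} one has $K_{F'}^2=10$ and $\chi(\cO_{F'})=6$, so $3K_{F'}^2=30<32=8(\chi(\cO_{F'})-2)$, and \cite[Cor.~4]{Xi87} then gives algebraic simple connectedness of the minimal surface of general type $F'$ directly from its numerical invariants. Your proof instead transports the property from the C-4 side: you use Catanese's classification to identify $F'\to\Delta'$ with some $F\to\Delta$ coming from a cubic fourfold with a line, and then quote \cref{prop:huy-for-C4}. This is logically sound (the uniqueness of the double cover attached to the even set, which you flag, does hold here since the minimal resolution of a nodal quintic in $\PP^3$ has torsion-free Picard group), but note two things. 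First, it is in a sense circular relative to the structure of the paper: the identification $F'\simeq F$ is exactly what this section sets out to exhibit explicitly (via hyperbolic reduction in \cref{prop:C4-R62-hypred}), and the paper deliberately phrases the agreement of \cref{cor:F-Simply-conn-C4} with \cref{prop:huy-for-C4} as a consistency check on the ``expected'' isomorphism rather than a consequence of it. Second, and more importantly, your route does not extend to the other two families: for the $40$-nodal sextics and the $20$-nodal quartics in $\QQ^3$ no Catanese-type classification is available (the paper says so explicitly), whereas the invariants-plus-Xiao argument is uniform across all three cases (\cref{cor:F-Simply-conn-GM20,cor:F-Simply-conn-GM21} are proved the same way). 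Also, \cref{prop:huy-for-C4} itself rests on the same inequality from \cite{Xi87}, so your proof ultimately invokes the same criterion, just indirectly and through a heavier identification.
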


\begin{proof}
Since $3K_{F'}^2 < 8(\chi(\cO_F')-2)$, this is a consequence of \cite[Cor.\ 4]{Xi87}.
\end{proof}

Note that \cref{prop:inv-of-F-for-cubic,cor:F-Simply-conn-C4} are in agreement with \cref{prop:huy-for-C4} and the expected isomorphism between $F'$ and (some) $F$.



\subsection{Comparing the conic bundles}
\subsubsection{The  conic bundle structure on C-4}
Recall that the conic bundle from C-4 is just obtained by projecting a cubic hypersurface 
in $\PP^5$ to $\PP^3$ from a line contained in the cubic. The corresponding rank three 
bundle is  $\cE = \cO_{\PP^3}^{\oplus 2} \oplus \cO_{\PP^3}(-1)$ 
and the quadratic form is a morphism
$$q : \cO_{\PP^3}(-1) \to \mathrm{Sym}^2 \cE^\vee.$$

Conversely, let $\xi$ be the divisor class of $\cO_{P(E)}(1)$ and $h$ be the pullback of the hyperplane class from $\PP^3$ to $\PP(E)$. Note that $\xi$ is then the pullback of the hyperplane class on $\PP^5$ by the blow up and the class of the exceptional locus is $\xi-h$. Then the conic bundle associated to a map $q$ as above is a divisor on $\PP(E)$ of class $2\xi+h$, adding the class of the exceptional divisor we get an effective divisor in the class $3\xi$ hence a pre-image by the blow up  of a cubic fourfold, that necessarily contains the blown up line. We conclude that every conic bundle associated to a map $q$ as above arises as C-4. In fact one can see the relation between cubics containing a line and the maps $q$ explicitly. If we we fix a coordinate chart such that the line is defined by the vanishing of four coordinates in $\PP^5$ then the corresponding matrix is the matrix of coefficients of cubic in front of the monomials in the remaining two coordinates. More precisely, choose coordinates $x_1\dots x_6$ on $\mathbb P^5$  such that the line that we blow up is given by equations $x_3=x_4=x_5=x_6=0$. Then cubics containing the line are given by equations of the form: 
$c+x_1q_1+ x_2q_2+x_1^2 l_1+x_1x_2 l_2+x_2^2 l_3=0$,
where $c$, $q_1$, $q_2$, $l_1$, $l_2$, $l_3$ are polynomials of respective degrees $3$, $2$, $2$, $1$, $1$, $1$. The map $q$ is given by the same data
as the following 3x3 matrix of polynomials in $x_3,x_4,x_5,x_6$ representing the coordinates in $\PP^3$ 
$$\left(
\begin{array}{ccc}
 c  & q_1 & q_2 \\
    q_1 & l_1& l_2 \\
    q_2 & l_2 & l_3
\end{array}
\right).
$$

\subsubsection{The  conic bundle structure on R-62}
On the other hand, as explained in \cite{bfmt} the conic bundle R-62 lives inside $\PP(\cF)$ where the rank three vector bundle $\cF$ is defined  by an exact sequence (with a  fixed three-dimensional vector space $K_3$)
$$0 \longrightarrow \cO^{\oplus 2} \oplus \cO(-1) \stackrel{\alpha}{\longrightarrow} \wedge^2((K_3 \otimes \cO)\oplus \cO(-1))^\vee \longrightarrow \cF^\vee \longrightarrow 0,$$
where the maps are general.

The middle term can then be rewritten as  $\cO^{\oplus 3} \oplus \cO(1)^{\oplus 3}$. 
Since  the map $\alpha$, restricted to $\cO^{\oplus 2}$, is in general given by constants, we can rewrite this sequence as 
$$0 \longrightarrow \cO(-1) \longrightarrow \cO \oplus \cO(1)^{\oplus 3} \longrightarrow \cF^\vee \longrightarrow 0.$$
The conic bundle structure in $\PP(\cF)$ arises from a general  morphism $\mathrm{Sym}^2 \cF \to \cO(-1)$. We will rather twist $\cF$ to get $\cE'=\cF(1)$, with an embedding
$$q': \cO(-1) \longrightarrow \mathrm{Sym}^2 \cE'^\vee.$$
Note that there is an isomorphism
$$H^0(\PP^3, \mathrm{Sym}^2 \cE'^\vee(1))\simeq K_3^\vee\otimes (\wedge^2K_3^\vee/\langle \omega_1, \omega_2\rangle)\oplus S^2K_3^\vee\otimes V_4^\vee , $$
in particular there is a family of dimension $26$ of conic bundles 
on $\PP(\cE')$. The one we consider is defined by the image of the line $\wedge^3K_3^\vee$ in $K_3^\vee\otimes\wedge^2K_3^\vee/\langle \omega_1, 
\omega_2\rangle$, and it is a priori unclear that it is general. 
On the other hand $H^1(\PP^3, \mathrm{End}(\cE'))$ 
is only twelve-dimensional, and we can deform the triple $(\omega_1,\omega_2,\beta)$ that defines R-62 without changing $\cE'$, but with
the effect of deforming our conic bundle non-trivially. We  need to check that 
in the end,  we get the generic such conic  bundle.


\begin{lemma}
The generic bundle $\cE'$ as above, with a general conic bundle structure defined 
by $q': \cO(-1) \longrightarrow \mathrm{Sym}^2 \cE'^\vee,$ can be obtained from R-62.
\end{lemma}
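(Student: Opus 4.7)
The plan is to show that the natural forgetful map from R-62 data to the moduli space of pairs $(\cE',[q'])$ is dominant, by a combination of a parameter count and a tangent-space analysis. On the target side, the deformations of $\cE'$ form a $12$-dimensional vector space $H^1(\mathrm{End}(\cE'))$, and for each fixed $\cE'$ the conic bundle forms modulo scalars sweep out the $25$-dimensional projective space $\PP H^0(\mathrm{Sym}^2\cE'^\vee(1))$, so the target has dimension $37-\dim\mathrm{Aut}(\cE')$. On the source side, the R-62 datum consists of the pair $(\omega_1,\omega_2)$, the extension $\alpha:\cO(-1)\to \cO\oplus\cO(1)^{\oplus 3}$ with components $(\beta,\gamma)$, and the residual conic form, modulo the natural actions of $GL(K_3)$, $GL(V_4)$, and $GL_2$.

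First I would make the forgetful map explicit: the bundle $\cE'=\cF(1)$ is reconstructed from the triple, while the R-62 form is the image of the line $\wedge^3 K_3^\vee$ inside $K_3^\vee\otimes(\wedge^2 K_3^\vee/\langle \omega_1,\omega_2\rangle)$, viewed as a summand of $H^0(\mathrm{Sym}^2\cE'^\vee(1))$. Second, I would count both dimensions and observe that they match after quotienting by the natural group actions, so that surjectivity becomes a question of infinitesimal transversality. Third, I would compute the differential of the map at a generic R-62 point: infinitesimal deformations of $(\omega_1,\omega_2,\beta)$ should surject onto $H^1(\mathrm{End}(\cE'))$ via the connecting homomorphism of the defining sequence, and the directions transverse to this image should move the conic form across both summands of the decomposition $H^0(\mathrm{Sym}^2\cE'^\vee(1))\simeq K_3^\vee\otimes(\wedge^2 K_3^\vee/\langle \omega_1,\omega_2\rangle)\oplus S^2K_3^\vee\otimes V_4^\vee$.

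The main obstacle is this last transversality statement. A priori the distinguished R-62 form lies entirely inside the first summand of the decomposition above, so it is not at all obvious that its deformations cover a generic direction of the full $H^0(\mathrm{Sym}^2\cE'^\vee(1))$. The essential point is that as $(\omega_1,\omega_2)$ varies, the summand decomposition itself varies, carrying the distinguished line $\wedge^3 K_3^\vee$ into a position transverse to the second summand $S^2 K_3^\vee\otimes V_4^\vee$. Verifying this concretely requires an explicit rank computation for the resulting linear map in coordinates on $K_3$ and $V_4$; a clean way to organize it is via the representation theory of $GL(K_3)\times GL(V_4)$ acting on the two summands, but a direct check, possibly assisted by \texttt{Macaulay2} as elsewhere in the paper, should also suffice.
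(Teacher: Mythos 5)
Your proposal takes a genuinely different route from the paper, but as written it has a gap at exactly the decisive point. The paper does not argue by dominance of a forgetful map between parameter spaces; instead it trivialises everything over an open subset of $\PP^3$ (where $a\ne 0$, resp.\ $\beta_{12}\ne 0$), writes the generic conic bundle structure on $\cE'$ as an explicit $3\times 3$ symmetric matrix of forms \cref{gen}, writes the structure coming from an R-62 datum $(\omega_1,\omega_2,\beta)$ as the matrix \cref{gen+} of the same shape, and observes that the entries $\phi_i,\psi_j,\kappa_k$ of the latter are explicit linear/quadratic expressions in the generic $\omega_1,\omega_2,\beta$, hence themselves generic. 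This direct matching is what actually establishes the lemma; no tangent-space computation is needed.

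The gap in your argument is that the transversality statement you correctly single out as ``the main obstacle'' is precisely the content of the lemma, and you do not prove it: you assert that the differential of the map $(\omega_1,\omega_2,\beta)\mapsto (\cE',[q'])$ ``should'' surject and that the varying decomposition ``carries the distinguished line $\wedge^3 K_3^\vee$ into a position transverse to the second summand,'' but you defer the required rank computation. Since the R-62 form is, for each fixed $(\omega_1,\omega_2)$, a single point of the summand $K_3^\vee\otimes(\wedge^2K_3^\vee/\langle\omega_1,\omega_2\rangle)$, there is no a priori reason for genericity, and a dimension count alone cannot settle it (your bookkeeping $37-\dim\mathrm{Aut}(\cE')$ is also loose: automorphisms act on $\PP H^0(\mathrm{Sym}^2\cE'^\vee(1))$ rather than subtracting from $H^1(\mathrm{End}(\cE'))$, and deformations of the pair do not simply add up). If you want to salvage your strategy, the honest way to verify the surjectivity of the differential is in effect to write out the form in a local frame — which is the paper's computation; once you have \cref{gen+} in hand, the transversality question evaporates and the deformation-theoretic scaffolding becomes unnecessary.
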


\proof 
Let us compare the two situations. Given $\cE'$, consider the bundle $\cG=\cE'(-1)$, defined by a 
sequence 
$$0 \longrightarrow \cG \longrightarrow \cO \oplus \cO(-1)^{\oplus 3} 
\stackrel{g}{\longrightarrow} \cO(1) \longrightarrow 0,$$
where the generic $g=(a,q_1,q_2,q_3)$ is given by a linear form $a$ and three quadratic 
forms $q_1,q_2,q_3$. Any morphism $q : \mathrm{Sym}^2\cG\ra \cO(-1)$ is the restriction 
of a morphism  $Q : \mathrm{Sym}^2(\cO(1) \oplus \cO^{\oplus 3})\ra \cO(1)$, given by some 
symmetric matrix 
$$\begin{pmatrix} 0 & \alpha_1& \alpha_2& \alpha_3 \\
 \alpha_1 & \nu_{11} & \nu_{12} & \nu_{13} \\ 
 \alpha_2 & \nu_{12} & \nu_{22} & \nu_{23} \\ 
 \alpha_3 & \nu_{13} & \nu_{23} & \nu_{33} 
\end{pmatrix}$$
where the $\alpha_i$'s are scalars and the $\nu_{jk}$ are linear forms. 
Over the open subset where $a\ne 0$, $\cG$ is generated by the 
vectors $g_i=q_i(\ell)e_0-a(\ell)\ell \otimes e_i$, $1\le i\le 3$, and 
$$q(g_i,g_j) = a(\ell)\Big(a(\ell)\nu_{ij}(\ell)
-\alpha_iq_j(\ell)-\alpha_jq_i(\ell)\Big).$$
We can even normalize further by supposing that $(\alpha_1,\alpha_2,\alpha_3)=(0,0,-1)$,
which gives a matrix for $a(\ell)^{-1}q$ of the form 
\begin{equation}\label{gen}
a\begin{pmatrix} \nu_{11} & \nu_{12} & \nu_{13} \\ 
  \nu_{12} & \nu_{22} & \nu_{23} \\ 
 \nu_{13} & \nu_{23} & \nu_{33} 
\end{pmatrix}+ \begin{pmatrix} 0& 0 & q_1 \\ 0 & 0 &q_2 \\ q_1& q_2 & 2q_3 
\end{pmatrix}.
\end{equation}
For a bundle $\cF$ coming from R-62 as above, the initial data are two skew-symmetric forms 
$\omega_1, \omega_2$ on $V_7$, and $\beta\in\wedge^2V_7^\vee\otimes V_4^\vee$,
a skew-symmetric form with values in $V_4^\vee$. We normalize the restrictions 
of $\omega_1, \omega_2$ to $K_3$ by fixing a basis $(e_1,e_2,e_3)$ such that 
$$\begin{array}{ccc}
   \omega_1(e_1,e_2)=0,   &  \omega_1(e_1,e_3)=1, &  \omega_1(e_2,e_3)=0, \\
  \omega_2(e_1,e_2)=0,   & \omega_2(e_1,e_3)=0, &  \omega_2(e_2,e_3)=1.
\end{array} $$
Denote by $\beta_{ij}$ the linear form $\beta(e_i,e_j)$. Over the open set 
where $\beta_{12}\ne 0$, $\cF$ is generated by the bivectors of the form
\begin{multline*}
\beta_{12}(\ell)\Big(k\wedge\ell-\omega_1(k,\ell)e_1\wedge e_3
-\omega_2(k,\ell)e_2\wedge e_3\Big)-\\\Big(\beta(k,\ell)-\omega_1(k,\ell)\beta_{13}
-\omega_2(k,\ell)\beta_{23}\Big)(\ell)e_1\wedge e_2,
\end{multline*}
for $k$ in $K_3$, hence by the bivectors $f_i=\beta_{12}(\ell)e_i\wedge\ell+\gamma_i$, for some
$\gamma_i\in\wedge^2K_3$, and $1\le i\le 3$. The quadratic form evaluated on these 
vectors gives 
$$q(f_i,f_j)=\beta_{12}(\ell)(e_i\wedge \gamma_j+e_j\wedge \gamma_i)\wedge\ell.$$
Since $\gamma_i = \beta_{12}(\ell)(\psi_i(\ell)e_2\wedge e_3-\phi_i(\ell)e_1\wedge e_3)+\kappa_i(\ell)e_1\wedge e_2$, and we deduce that
the matrix of $\beta_{12}(\ell)^{-1}q$ is of the form
\begin{equation}\label{gen+}\beta_{12}\begin{pmatrix} \psi_1 & \phi_1+\psi_2 & \psi_{3} \\ 
  \phi_{1}+\psi_2 & \phi_{2} & \phi_{3} \\ 
 \psi_{3} & \phi_{3} & 0 
\end{pmatrix}+ \begin{pmatrix} 0& 0 & \kappa_1 \\ 0 & 0 &\kappa_2 \\ \kappa_1& 
\kappa_2 & 2\kappa_3 
\end{pmatrix}.
\end{equation}
Comparing the forms of \cref{gen} and \cref{gen+}, we can conclude that 
R-62 allows us to recover the generic $\cE'$ and its generic conic bundle structure 
if we can suppose that the linear forms 
$\phi_i, \psi_j$ and the quadratic forms $\kappa_k$ are generic. But this is 
clear, because
$$\phi_i(\ell)=\omega_1(e_i,\ell),\!\! \quad \psi_j(\ell)=\omega_2(e_j,\ell),\!\! \quad 
\kappa_k(\ell) = \beta(e_k,\ell)(\ell)-\phi_k(\ell)\beta_{13}(\ell)
-\psi_k(\ell)\beta_{23}(\ell),$$
and $\omega_1, \omega_2, \beta$ are generic. \qed

\subsubsection{How to connect them through hyperbolic reduction}\label{conic bundle on R-62}
Consider on $\PP^3$ the rank five vector bundle 
$$\cG= \cO(1) \oplus \cO^{\oplus 3} \oplus \cO(-1),$$
with a quadratic form
$$Q: \cO(-1) \to \mathrm{Sym^2}\cG^\vee, $$
and $Y \to \PP^3$ the induced quadric bundle.


The subbundle $\cN=\cO(1) \subset \cG$ is always isotropic. Moreover, $\cO^\perp\simeq \cO(1) \oplus \cO^{\oplus 2} \oplus \cO(-1)$, so that  hyperbolic reduction of $\cG$ via $\cN$ 
yields a conic bundle $X$ of type C-4.

If we ask for an additional subbundle $\cN'\simeq \cO(-1) \subset \cG$ to be isotropic, the sequence 
\cref{eq:useful-for-hyp-red} shows  that hyperbolic reduction yields a conic bundle $X'$ of type R-62. 
But this is an extra condition on $Q$ that will be reflected in the geometry of $X$. 
Applying \cref{section}, we get:

\begin{prop} \label{prop:C4-R62-hypred} For a general $X'$ of type R-62, there exists $X$ of type C-4 and a quadric threefold bundle $Y\to \PP^3$ such that $X$ and $X'$
are obtained by hyperbolic reduction from $Y$. In that case, $X$ corresponds to a cubic fourfold containing a plane that does not meet the blown-up line. 

Conversely, if $X$ is the blow up of a cubic fourfold containing a plane that does not meet the blown up line, then there exists $X'$ of type R-62 and a quadric threefold bundle $Y\to \PP^3$ such that $X$ and $X'$
are obtained by hyperbolic reduction from $Y$. 
\end{prop}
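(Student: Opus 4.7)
The plan is to establish both implications via \cref{section}, which relates sections of the C-4 conic bundle $X\to \PP^3$ over the orthogonality hypersurface $D = \{Q(\cN,\cN')=0\} \subset \PP^3$ to the existence of a pair of isotropic subbundles in $\cG$.

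For the forward direction, given $Y$ with isotropic $\cN=\cO(1)$ and $\cN'=\cO(-1)$ (whose existence comes from the construction preceding the proposition), the pairing $Q|_{\cN\otimes\cN'}$ is a section of $\cN^\vee\otimes\cN'^\vee\otimes\cL \cong \cO(-1)\otimes\cO(1)\otimes\cO(1)=\cO(1)$, so $D$ is a hyperplane, i.e.\ a plane $\PP^2$ in $\PP^3$. By \cref{section}, $X\to\PP^3$ admits a section over $D$, i.e.\ a surface $\Sigma\subset X$ mapping isomorphically onto $D\cong\PP^2$. I would then argue that for generic $X'$ the corresponding $\Sigma$ is disjoint from the exceptional divisor of the blow-down $\pi\colon X\to X_3$, so that $\pi$ embeds $\Sigma$ isomorphically as a surface $\tilde P\subset X_3\subset\PP^5$. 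Two observations complete the identification: $\tilde P\cap\ell=\emptyset$, since projection from $\ell$ restricted to $\tilde P$ is everywhere defined (being the isomorphism with $D$); and $\tilde P$ is linearly embedded in $\PP^5$, since the pullback to $\Sigma$ of the hyperplane class of $\PP^3$ via the projection from $\ell$ equals the restriction of the hyperplane class of $\PP^5$ to $\tilde P$, both being $\cO_{\PP^2}(1)$. Hence $\tilde P$ is a plane in $X_3$ disjoint from $\ell$.

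For the converse, given $X_3\subset\PP^5$ containing $\ell$ and a plane $P$ with $P\cap\ell=\emptyset$, I would construct $Y$ explicitly. Choosing coordinates so that $\ell = \{x_2=x_3=x_4=x_5=0\}$ and $P = \{x_0=x_1=x_2=0\}$, the incidence relations force the cubic equation of $X_3$ to contain no monomials purely in $\{x_0,x_1\}$ nor purely in $\{x_3,x_4,x_5\}$. This structural vanishing allows realizing the $3\times 3$ symmetric matrix of the conic bundle $X\to\PP^3$ (projection from $\ell$) as the middle block of a $5\times 5$ symmetric matrix with vanishing $(1,1)$ and $(5,5)$ entries. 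This larger matrix defines the quadric bundle $Y\subset\PP(\cG)$ with $\cG=\cO(1)\oplus\cO^{\oplus 3}\oplus\cO(-1)$, in which $\cN=\cO(1)$ and $\cN'=\cO(-1)$ are both isotropic by the vanishing of the two corner entries. Hyperbolic reduction via $\cN$ recovers $X$, while reduction via $\cN'$ yields an $X'$ of type R-62 matching the form \eqref{gen+}.

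The main obstacle is the geometric identification in the forward direction, namely showing that the image $\tilde P$ of $\Sigma$ under the blow-down is a linearly embedded plane in $\PP^5$; this reduces to comparing two hyperplane class pullbacks on $\Sigma\cong\PP^2$ and verifying they agree. A secondary technical point is the genericity argument ensuring that $\Sigma$ avoids the exceptional divisor $E$ of $\pi$. The converse direction is essentially explicit bookkeeping with the matrix form and matches up directly with equations \eqref{gen} and \eqref{gen+}.
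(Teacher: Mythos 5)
Your converse direction is essentially the paper's argument: the same coordinate normalization, the same structural vanishing of the cubic's equation forced by containing the line and a disjoint plane, and the same explicit $5\times 5$ symmetric matrix with two isotropic line subbundles whose reductions give back $X$ and an $X'$ of type R-62. Your identification of the plane in the forward direction is also correct and in fact more detailed than what the paper writes: the paper only asserts that the section over $D$ maps to a plane disjoint from the line, whereas you verify that $D$ is a hyperplane (a section of $\cN^\vee\otimes\cN'^\vee\otimes\cL\cong\cO(1)$), that the section avoids the exceptional divisor, and that the pullback of $\cO_{\PP^5}(1)$ to $\Sigma\cong\PP^2$ is $\cO_{\PP^2}(1)$, so the image is linearly embedded. (You could even avoid the genericity appeal here: the exceptional divisor has class $\xi-h$, which restricts to degree $0$ on $\Sigma$, so $\Sigma$ either lies inside it or misses it entirely.)

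The genuine gap is at the start of the forward direction. You write that the existence of $Y$ with both isotropic subbundles ``comes from the construction preceding the proposition,'' but that construction only runs in the wrong direction: it shows that \emph{if} one has a quadric threefold bundle $Q:\cO(-1)\to\mathrm{Sym}^2\cG^\vee$ on $\cG=\cO(1)\oplus\cO^{\oplus 3}\oplus\cO(-1)$ with $\cO(-1)$ isotropic, \emph{then} its reduction is a conic bundle of type R-62. The proposition's forward claim is the converse: that a \emph{general} $X'$ of type R-62 arises this way, i.e.\ that its conic bundle $q':\cO(-1)\to\mathrm{Sym}^2\cE'^\vee$ admits a hyperbolic \emph{extension} to such a $Y$. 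This is exactly where the paper invokes \cite[Proposition 3.6]{BKK}, applied to the presentation $0\to\cO(-1)\to\cO\oplus\cO(1)^{\oplus 3}\to\cE'^\vee\to 0$ of the R-62 bundle; without this (or an equivalent dimension count showing the reductions sweep out the general R-62), the existence of $Y$ — which is the main content of the forward implication — is assumed rather than proved.
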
 

\begin{proof} Consider $X'$ of type R-62. By the discussion in \cref{conic bundle on R-62}, $X'$ appears as a symmetric degeneracy locus associated to $q': \cO(-1)\to \mathrm{Sym}^2  \cE'^{\vee}$. The latter by \cite[Proposition 3.6]{BKK} appears as a hyperbolic reduction of a quadric surface bundle associated to some $Q:  \cO(-1)\to \mathrm{Sym}^2  \cG^{\vee}$ with respect to a subbundle $\mathcal O(-1)\subset \cG$. The latter quadric bundle $Q$ has another isotropic subbundle $\mathcal O(1)\subset \cG$ and the hyperbolic reduction with respect to that bundle gives a conic bundle associated to a map $q: \cO(-1)\to \mathrm{Sym}^2  \cE^{\vee}$. By \cref{section} there exists a plane in $\mathbb{P}^3$ over which $q$ admits a section. This section is then mapped by the blow up map to a plane contained in the cubic fourfold, that does not meet the blown up line.

Let us pass to the other direction. Assume that we have a cubic fourfold containing a plane. Choose coordinates $x_1\dots x_6$ on $\mathbb P^5$  such that the plane is given by equations $x_1=x_2=x_3=0$ and $x_3=x_4=x_5=x_6=0$ are equations of the blown up line. Then the cubic blown up in the line gives rise to the conic bundle $X$ defined by a matrix of the form:
$$\left(
\begin{array}{ccc}
 x_3  q_0  & q_1 & q_2 \\
    q_1 & l_1& l_2 \\
    q_2 & l_2 & l_3
\end{array}
\right).
$$
with $q_i$, $l_i$ forms of degrees 2 and 1 respectively in the variables $x_3,x_4,x_5,x_6$.
Indeed, equations of cubics containing the plane are of the form  $x_1f_1(x_1,\dots,x_6)+x_2f_2(x_2,\dots,x_6)+x_3q_0(x_3,\dots,x_6)=0$ and the matrix of the conic fibrations arises by writing down coefficients at the monomials in $x_1$, $x_2$ of the cubic equation.
Let us now consider the matrix 

$$\left(
\begin{array}{ccccc}
0& q_1 & q_2 & q_0 &-\frac{1}{2}x_3 \\
q_1 & l_1& l_2 & 0 & 0  \\
q_2 & l_2 & l_3& 0 & 0  \\
q_0 & 0   & 0  & 0 & 1  \\
-\frac{1}{2}x_3 & 0   & 0  & 1 & 0
\end{array}
\right),
$$
giving rise as a map $Q:  \cO(-1)\to \mathrm{Sym}^2  \cG^{\vee}$ to a quadric threefold bundle. Observe that this bundle has two isotropic sections given by $\cO(1)\subset \cG$ and  $\cO(-1)\subset \cG$ corresponding to the first and last row or column. Clearly, the quadric reduction with respect to the second section gives back $X$, whereas the quadric reduction with respect to the first section gives a conic bundle associated to a map $ \cO(-1)\to \mathrm{Sym}^2  \cE'^{\vee}$ hence is a variety of type R-62.
\end{proof}

\begin{remark}
The Fano variety of lines for a  cubic containing a plane is a hyper-K\"ahler fourfold with Picard lattice $(8)\oplus (-2)$ such that the $-2$ class
is of divisibility $2$ and is represented by a divisor $E$ that is a $\PP^1$ fibration over a K3 surface of degree $2$.
 The discriminant surface $F$ is naturally embedded in $F(X)$. One can see that $E\cap F$ is isomorphic to a genus $2$ curve. Moreover $F$ has a structure of fibration induced from the Lagrangian fibration on $F(X)$. 
\end{remark}

\begin{corollary}\label{coro:HS-forC4-R62}
If $X$ and $X'$ are as in \cref{prop:C4-R62-hypred}, then $X$ and $X'$ are $K(\PP^3)$-birationally equivalent, and $F \simeq F'$.
Moreover, we have an isometry $H^4_{pr}(X) \simeq H^4_{pr}(X')$.
\end{corollary}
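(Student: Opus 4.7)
The plan is to assemble the three statements directly from \cref{prop:C4-R62-hypred} together with two ingredients already in place: the general behaviour of hyperbolic reduction from \cite{kuznethyperbolic}, and our own \cref{prop:cbisometry}.

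First, I would dispatch the $K(\PP^3)$-birational equivalence. By \cref{prop:C4-R62-hypred}, both $X\to \PP^3$ and $X'\to \PP^3$ are hyperbolic reductions of a common quadric threefold bundle $Y\to\PP^3$, with respect to two transverse isotropic line subbundles $\cN\simeq\cO(1)$ and $\cN'\simeq\cO(-1)$ of $\cG$. By \cite{kuznethyperbolic}, two quadric bundles linked by a hyperbolic reduction through an isotropic subbundle are $K(B)$-birationally equivalent: at the generic point of $\PP^3$ the two rulings of a smooth rank-four quadric provide an explicit birational map between the two reduced conic bundles. Applying this twice, through $\cN$ and $\cN'$, yields a $K(\PP^3)$-birational equivalence between $X$ and $X'$.

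Second, I would identify the discriminant double covers. The discriminant divisor and the centre of the even Clifford algebra are invariants of the Witt class of a quadric bundle and are therefore preserved under hyperbolic reduction. This shows that $\Delta$ and the étale double cover of $\Delta$ encoded in the even Clifford algebra coincide for $X$ and for $X'$; extending this over the nodal locus of $\Delta$ (where the double cover becomes ramified), we obtain an isomorphism of pairs $(F\to \Delta)\simeq (F'\to \Delta)$, and in particular $F\simeq F'$ as surfaces with involution.

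Finally, for the Hodge isometry, I apply \cref{prop:cbisometry} to each of the conic bundles $X\to\PP^3$ and $X'\to\PP^3$: up to the common factor $-2$, it furnishes Hodge isometries $H^2(F)^-\simeq H^4_{pr}(X,\ZZ)$ and $H^2(F')^-\simeq H^4_{pr}(X',\ZZ)$. Composing with the isomorphism $F\simeq F'$ obtained in the previous step, which is compatible with the respective involutions, gives the desired isometry $H^4_{pr}(X)\simeq H^4_{pr}(X')$ of Hodge structures of K3 type.

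The only real subtlety I anticipate is the second step: making precise that hyperbolic reduction preserves not merely the discriminant divisor but the discriminant double cover as a scheme over $\Delta$, including at the nodal points where $\Delta$ is singular and the cover is ramified. Once this compatibility is granted, the other two statements are essentially formal consequences of results already proved.
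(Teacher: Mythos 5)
Your proposal is correct and follows essentially the same route as the paper: the $K(\PP^3)$-birational equivalence is quoted from \cite[Prop.\ 1.1]{kuznethyperbolic}, the identification $F\simeq F'$ rests on hyperbolic reduction preserving the discriminant and (even) Clifford algebra data, and the Hodge isometry is obtained by composing the two instances of \cref{prop:cbisometry} through $H^2(F)^-$. The subtlety you flag about the double cover over the nodal locus is real but is exactly the point the paper also takes for granted from the general theory of hyperbolic reduction.
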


\begin{proof}
    The first part is a consequence of \cite[Prop.\ 1.1]{kuznethyperbolic}. The second is a consequence of \cref{prop:cbisometry}, via $H^2(F)^-$ and the isomorphism $F \simeq F'$ compatible with the involution.
\end{proof}

We thus have completed the proof of \hyperlink{main_theorem_b}{Main Theorem B} in this case.


Moreover, we remark that \cref{coro:HS-forC4-R62} gives an explanation for the K3 structure of $X'$, which was not explained in \cite{bfmt}, via the identification with cubic fourfolds containing a plane.

\section{40-Nodal sextics and Gushel--Mukai fourfolds}
\label{sect40nodal}

\subsection{Blow-up of GM along a degree 4 del Pezzo and nodal sextics}
Let $X_{10}$ be a general Gushel--Mukai fourfold. We first recall how to construct a conic bundle structure $X_{10} \dashrightarrow \PP^3$.
Inside the flag variety $\Fl(1,2,5)$, let $X$ be the zero locus of a general section of 
$\cO(1,0) \oplus \cO(0, 1) \oplus \cO(0,2)$. As described in \cite{bfmt}, item GM-20, we have that:

\begin{itemize}[leftmargin=*]
    \item the first projection $\pi_1 \colon X \to \PP^4$ is a conic bundle over a $\PP^3 \subset \PP^4$, whose discriminant is a sextic surface $\Delta$ with $40$ nodes.  
    \item the second projection $\pi_2 \colon X \to \Gr(2,5)$ is a blow-up $Bl_{dP_4}X_{10}$ along a del Pezzo surface of degree 4, where the Gushel--Mukai fourfold $X_{10}$ is general.
\end{itemize}
   
   The factor $\cO(1,0)$ means that we cut $\PP^4$ by some hyperplane $H$. So the construction
   can be rephrased as follows. We start with the fourfold $X_{10}$, and its variety
   of lines $F(X_{10})$, which is a smooth threefold. Each line in the Grassmannian has a
   pivot, which is the common point to the lines in the pencil. Then we define $F_H\subset 
   F(X_{10})$ as the surface of lines whose pivot is in $H$. This is a smooth surface in general, admitting a natural involution whose quotient is the nodal sextic $\Delta$. 
   
   More concretely, a line $\ell\subset H\subset V_5$ defines a four-dimensional 
   space $\ell\wedge V_5\subset \wedge^2V_5.$ A section of $\cO(0,1)$ is given by a linear form on the latter space, and by restriction we get a morphism $\cO(0,-1)\rightarrow 
   (\ell\wedge V_5)^\vee$ which in general degenerates in codimension $4-1+1=4$, hence nowhere. So the cokernel is a rank three bundle $\cE^\vee$. Finally, the section of $\cO(0,2)$
   is given by a quadratic form $q$ on  $ \wedge^2V_5$, and by restriction to $\cE$ we get 
   our conic bundle. 
   
   Lines in these conics correspond to hyperplanes in the fibres of $\cE$ on which $q$ vanishes identically. So we shall consider the fibre bundle $\Gr(2,\cE)$ over $\PP^3$, and the 
   discriminant double cover $F \to \Delta$ (parametrising vertical lines) is defined inside this bundle by a section of $S^2\cF^\vee$,
   where $\cF$ denotes the rank two tautological bundle of the relative Grassmannian.
   
   \begin{prop} 
   The surface $F$ is a minimal surface of general type.
The numerical invariants of $F$ are 
$$\chi(\cO_F)=12, \quad e(F)=96, \quad K_F^2=48, \quad \chi(-K_F)=60.$$
Moreover, $\chi(T_F)=-24$.
\end{prop}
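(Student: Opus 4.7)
The approach is to exploit the realisation of $F$ as the zero locus, inside the smooth $5$-fold $G:=\Gr(2,\cE)\to\PP^3$, of a regular section of the rank $3$ bundle $S^2\cF^\vee$, so that every invariant reduces to Chern-class calculus on $G$ combined with the explicit description of $\cE$ as a quotient on $\PP^3$ (coming from the section of $\cO(0,1)$).

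I would first apply adjunction, $K_F=(K_G+c_1(S^2\cF^\vee))|_F$, with $K_G$ obtained from the relative cotangent sequence of $\pi\colon G\to\PP^3$ (using $T_{G/\PP^3}=\cF^\vee\otimes(\cE/\cF)$) and the pullback of $K_{\PP^3}$. Since $c_1(S^2\cF^\vee)=3c_1(\cF^\vee)$ cancels the $\cF$-contribution in $K_G$, the class should collapse to a multiple of the hyperplane $H$ pulled back from $\PP^3$; the expected output is $K_F=2H|_F$. From this, minimality and general type are immediate: the morphism $F\to\Delta\subset\PP^3$ is finite of degree two onto the sextic $\Delta$, so $H|_F$ is the pullback of an ample divisor under a finite morphism, hence ample, and therefore so is $K_F=2H|_F$. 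In particular $F$ is a minimal surface of general type (indeed, its own canonical model).

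The square $K_F^2$ is then essentially geometric: $K_F^2=4(H|_F)^2$, and $(H|_F)^2$ counts the preimages in $F$ of a generic line in $\PP^3$, i.e.\ $2\cdot\deg(\Delta)=12$, since the cover is étale over the smooth points of $\Delta$ met by a generic line. This yields $K_F^2=48$. For $\chi(\cO_F)$ I would use the Koszul resolution
\[
0\to \wedge^3(S^2\cF^\vee)^\vee \to \wedge^2(S^2\cF^\vee)^\vee \to (S^2\cF^\vee)^\vee \to \cO_G \to \cO_F \to 0,
\]
push each term down through $\pi$ using Borel--Weil--Bott fibrewise (the bundles involved are Schur functors of $\cF$ and $\cE/\cF$), and then compute cohomology on $\PP^3$; this is the same kind of calculation as in \cite{bfmt} and is handled symbolically in Macaulay2, returning $\chi(\cO_F)=12$. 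The Euler number $e(F)$ is then forced by Noether's formula $\chi(\cO_F)=(K_F^2+e(F))/12$ to equal $96$; independently, the Whitney identity $c(T_F)=c(T_G|_F)/c(S^2\cF^\vee|_F)$ together with the relative integration formulas on $\Gr(2,\cE)\to\PP^3$ gives the same answer and serves as a cross-check.

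The remaining invariants fall out of Riemann--Roch on the surface: $\chi(-K_F)=K_F^2+\chi(\cO_F)=60$, and the rank-two Hirzebruch--Riemann--Roch formula applied to $T_F$ produces $\chi(T_F)=K_F^2-e(F)+2\chi(\cO_F)=-24$. The main obstacle is purely technical, namely carrying out the push-forwards $\pi_*$ of Chern monomials in $\cF$ on $G$ in terms of $c_i(\cE)$ and then handling the large Koszul sum for $\chi(\cO_F)$; both are routine but error-prone by hand, and the cleanest implementation is the Macaulay2 one already used in \cite{bfmt,M2}.
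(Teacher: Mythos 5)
Your proposal is correct and follows essentially the same route as the paper: minimality and general type come from the ampleness of $K_F$ as pullback of $\cO_\Delta(2)$ under the finite double cover of the $40$-nodal sextic, and the numerical invariants come from the description of $F$ as the zero locus of a section of $S^2\cF^\vee$ on $\Gr(2,\cE)$, evaluated by Chern-class computations in Macaulay2. The only cosmetic difference is that you deduce $K_F^2=48$, $e(F)=96$, $\chi(-K_F)$ and $\chi(T_F)$ from $K_F=2H|_F$, Noether's formula and Riemann--Roch once $\chi(\cO_F)=12$ is known, whereas the paper reads all the invariants off the same Schubert-calculus computation.
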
 

\begin{proof}
As a double cover of a sextic surface ramified in 16 nodes, the canonical divisor of $F$ is ample. The description of $F$ as the zero locus of a section of $S^2\cF^\vee$ allows us to compute its numerical invariants of $F$ with the aid of \cite{M2}.
\end{proof}

\begin{corollary}\label{cor:F-Simply-conn-GM20}
The surface $F$ is algebraically simply connected.
\end{corollary}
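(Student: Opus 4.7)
The numerical invariants just computed show that Xiao's criterion \cite[Cor.~4]{Xi87}, used in the proof of \cref{cor:F-Simply-conn-C4}, does not apply here: one checks that $3K_F^2=144$ whereas $8(\chi(\cO_F)-2)=80$. A different strategy is therefore needed.

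The plan is to exploit the explicit description of $F$ as the smooth zero locus, inside the relative Grassmannian bundle $\pi\colon Y:=\Gr(2,\cE)\to \PP^3$, of a general section of the rank-three vector bundle $\cV:=S^2\cF^\vee$, where $\cF$ denotes the tautological rank-two subbundle. The ambient variety $Y$ has dimension $5$ and is simply connected: the fibration $\Gr(2,3)\hookrightarrow Y\to \PP^3$ has simply connected base and simply connected fibres, so the long exact sequence of homotopy groups gives $\pi_1(Y)=0$. The codimension of $F$ in $Y$ is $3$, so $\dim Y-\mathrm{rk}\,\cV=2$, which is precisely the range in which Sommese-type Lefschetz theorems for zero loci of (suitably) ample vector bundles predict an isomorphism $\pi_1(F)\xrightarrow{\sim}\pi_1(Y)=0$. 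This would immediately yield algebraic simple connectedness of $F$.

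The key technical step is therefore to verify the relevant positivity of $\cV=S^2\cF^\vee$. Fibrewise, over a point of $\PP^3$, the tautological dual $\cF^\vee$ on $\Gr(2,3)\simeq\PP^2$ is isomorphic to $T_{\PP^2}(-1)$, hence ample, so $S^2\cF^\vee$ is ample on every fibre of $\pi$. Globally, one needs to combine this relative ampleness with enough positivity coming from the base, using the explicit presentation of $\cE$ stemming from the GM-20 construction. I expect this global positivity check to be the main obstacle, since $\cE$ is defined as a cokernel involving a negative twist and its ampleness on $\PP^3$ is not automatic; if the direct verification is subtle, a weaker notion (such as $k$-ampleness or bigness of the tautological class on $\PP(\cV^\vee)$) would already suffice for the Lefschetz-type conclusion on $\pi_1$. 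Should even this prove delicate, a clean alternative is to invoke the identification given by \hyperlink{main_theorem_a}{Main Theorem A}, where $F$ is realised as a smooth double linear section of a simply connected double EPW sextic, and to conclude by the classical Lefschetz hyperplane theorem for smooth ample divisors.
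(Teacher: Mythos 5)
The paper's own proof is the same one-liner as in \cref{cor:F-Simply-conn-C4}: it asserts that $3K_F^2 < 8(\chi(\cO_F)-2)$ and invokes \cite[Cor.\ 4]{Xi87}. You are right that this inequality fails here: with $K_F^2=48$ and $\chi(\cO_F)=12$ it reads $144<80$. So rather than merely taking a different route, you have identified a genuine problem with the argument as printed (the same issue occurs for \cref{cor:F-Simply-conn-GM21}, where $48<40$ would be needed); Xiao's criterion only applies verbatim in the C-4 case, where $30<32$.

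Of your two replacements, the first (Sommese-type Lefschetz for the zero locus of $S^2\cF^\vee$ inside the fivefold $\Gr(2,\cE)$) is not completed and is unlikely to close: $\cE^\vee$ is only a quotient of $\cO(1)\oplus\cQ_{\PP^3}^\vee(1)$, hence nef but with trivial directions along lines of $\PP^3$, so $S^2\cF^\vee$ is at best $k$-ample with $k\geq 1$; since the codimension is $3$ in a fivefold, Sommese's theorem needs $5-3-k\geq 2$, i.e.\ honest ampleness, and there is no slack. Your fallback, however, is the right fix and requires no new input: by \cref{codim2} (\hyperlink{main_theorem_a}{Main Theorem A}), $F=\tilde{Y}_{A,V_4}$ is the common zero locus on the smooth, simply connected hyperK\"ahler fourfold $\tilde{Y}_A$ of two sections of the ample line bundle $f_A^*\cO_{\PP^5}(1)$, so the Lefschetz theorem for complete intersections of ample divisors gives $\pi_1(F)\simeq\pi_1(\tilde{Y}_A)=0$, hence even topological (a fortiori algebraic) simple connectedness. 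This is consistent with the paper's own remark in \cref{openProblems} that the Lefschetz argument via the associated hyperK\"ahler manifold applies in all cases except C-4 and GM-21 — GM-20 is not among the exceptions. In short: your proof is correct via the fallback, and it is the argument the corollary should actually rely on.
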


\begin{proof}
This follows again from the fact that $3K_{F}^2 < 8(\chi(\cO_F)-2)$, by \cite[Cor.\ 4]{Xi87}.
\end{proof}

\begin{prop}
Up to multiplication by -2, there is a Hodge isometry between $H^2(F)^{-}$ and $H^4_{pr}(X,\ZZ)$. In particular, $H^2(F)^-$ has a K3-type Hodge structure with $h^{1,1}=25$.
\end{prop}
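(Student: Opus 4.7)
The first part of the statement is immediate from \cref{prop:cbisometry} applied to the conic bundle $f=\pi_1\colon X\to\PP^3$: since $\Delta$ is nodal and $F\to\Delta$ is its smooth double cover, that proposition supplies the required Hodge isometry (up to multiplication by $-2$) between $H^2(F)^-$ and $H^4_{pr}(X,\ZZ)$. It remains to establish the K3-type property and the value $h^{1,1}=25$.

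My plan is to read off the Hodge structure on $H^4(X,\ZZ)$ from the blow-up description $\pi_2\colon X\to X_{10}$ along a del Pezzo surface of degree $4$. The classical blow-up formula yields a canonical isomorphism of pure Hodge structures
\[
H^4(X,\ZZ)\simeq \pi_2^*H^4(X_{10},\ZZ)\oplus H^2(dP_4,\ZZ)(-1),
\]
the second summand arising from the exceptional $\PP^1$-bundle over $dP_4$. Since $X_{10}$ is a Gushel--Mukai fourfold, its middle cohomology carries a K3-type Hodge structure with $h^{4,0}=0$ and $h^{3,1}=h^{1,3}=1$; since $dP_4$ is a rational surface, $H^2(dP_4,\ZZ)$ is purely of Hodge type $(1,1)$. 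Combining these gives $h^{4,0}(X)=0$ and $h^{3,1}(X)=1$.

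Next I would pass to the primitive part. The classes generating $f^*H^4(\PP^3,\ZZ)$ and $f^*H^2(\PP^3,\ZZ)\cdot h$ are algebraic of Hodge type $(2,2)$, so their removal preserves $h^{4,0}_{pr}=0$ and $h^{3,1}_{pr}=1$. Through the shifted Hodge isometry of \cref{prop:cbisometry} these translate into $h^{2,0}(H^2(F)^-)=h^{0,2}(H^2(F)^-)=1$, which is the K3-type property. The value $h^{1,1}=25$ would then follow from a direct dimension count using the known $h^{2,2}(X_{10})$, the rank $b_2(dP_4)=6$, and a careful analysis of the effective rank of the subspace cut out by the primitivity condition (which is affected by the fact that the pairing $(f^*L)^2\cdot (f^*L)^2=0$ makes this subspace degenerate).

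The step I expect to demand the most attention is precisely this final dimension count, because the two classes $(f^*L)^2$ and $(f^*L)\cdot h$ span a two-plane with degenerate intersection pairing inside $H^4(X,\ZZ)$, so the interplay between this two-plane, its orthogonal complement, and the blow-up decomposition must be tracked carefully to extract the correct rank of $H^4_{pr}(X,\ZZ)$. As an independent cross-check, $\dim H^2(F)^-$ can be computed topologically from the ramified double cover $\tilde F\to \tilde\Delta$ using the Euler characteristic $e(F)=96$ and the $\iota$-equivariant decomposition of $H^2(\tilde F,\QQ)$ together with $b_2(\tilde\Delta)$, which should confirm the value obtained from the Hodge-theoretic side.
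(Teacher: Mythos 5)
Your overall route is the same as the paper's: the isometry is exactly \cref{prop:cbisometry}, and the paper likewise deduces the numerics from the blow-up $\pi_2\colon X\to X_{10}$ together with the fact that $H^4_{pr}(X,\ZZ)$ is the orthogonal of two algebraic classes. Your justification of the K3 type via $h^{4,0}(X)=0$ and $h^{3,1}(X)=1$ from the blow-up decomposition is a little more explicit than the paper's (which leaves this implicit) and is correct.

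The genuine gap is that the one claim requiring actual work, $h^{1,1}=25$, is never established: you defer it to ``a direct dimension count'' and flag it as delicate without performing it. Moreover, the difficulty you anticipate is not where you place it. The classes $f^*L^2$ and $f^*L\cdot h$ span a \emph{non}-degenerate plane, since $(f^*L^2)\cdot(f^*L\cdot h)=f^*L^3\cdot h=2$ (the degree of a conic fibre); hence $H^4_{pr}(X)$ is an honest orthogonal complement of rank $b_4(X)-2$ meeting that plane trivially, and $h^{2,2}_{pr}(X)=h^{2,2}(X)-2$ with no further analysis. The real issue is the value of $h^{2,2}(X)$ itself. With the inputs you quote --- $h^{2,2}(X_{10})=22$ and $b_2(dP_4)=6$ for a codimension-two centre --- the blow-up formula gives $h^{2,2}(X)=28$, hence $h^{2,2}_{pr}(X)=26$, not $25$. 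Your proposed Euler-characteristic cross-check gives the same answer: $e(\tilde F)=96+40=136$, so $b_2(\tilde F)=134$, while $b_2(\tilde\Delta)=e(\tilde\Delta)-2=106$, so $H^2(F)^-$ has rank $28$ and $h^{1,1}=26$; it would therefore not ``confirm'' the value $25$. The paper's own one-line proof simply asserts $h^{2,2}(X)=27$ without derivation. As written your proposal neither proves $25$ nor confronts this discrepancy; you should either locate an error in the count above or conclude that the stated value ought to be $26$.
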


\begin{proof}
The isometry is given by \cref{prop:cbisometry}. Since $X \to X_{10}$ is a blow-up of a del Pezzo surface of degree 4, we have that $h^{2,2}(X)=27$. On the other hand $H^4_{pr}(X,\ZZ)$ is obtained as the orthogonal to two algebraic classes, so that $h^{2,2}_{pr}(X)=25$.
\end{proof}

We have thus proved \hyperlink{main_theorem_b}{Main Theorem B} (5) and (6) in this case.

\smallskip

The variety of lines $F(X_{10})$ in a smooth Gushel--Mukai fourfold $X_{10}$ was described in \cite[Theorem 4.7 (c)]{dk-linear}
   precisely in terms of the pivot map, denoted $\sigma : F(X_{10})\rightarrow \PP(V_5)$.
   The claim is that this map factors as 
   $$F(X_{10})\rightarrow \tilde{Y}_{A,V_5}\stackrel{f_{A,V_5}}{\rightarrow} Y_{A,V_5}\hookrightarrow
   \PP(V_5)$$
   where $Y_A$ is the EPW-sextic in $\PP^5$ associated to $X_{10}$ and $Y_{A,V_5}\subset
   \PP(V_5)$ is a hyperplane section; $\tilde{Y}_A$ is the double EPW-sextic with its 
   double covering $f_A : \tilde{Y}_A\rightarrow Y_A$, which restricts to $f_{A,V_5} : \tilde{Y}_{A,V_5}\rightarrow Y_{A,V_5}$. We deduce from this description the following 
   result, which is \hyperlink{main_theorem_a}{Main Theorem A} for this case.
   
   \begin{prop}\label{codim2}
   If $\PP(V_4)\subset \PP(V_5)$ is general, then the general type surface $F_{V_4}$ is $F$, and its double cover is given by
$$   F=\tilde{Y}_{A,V_4}\stackrel{f_{A,V_4}}{\rightarrow} Y_{A,V_4}=\Delta.$$
In particular $\Delta$ is just a codimension two linear section of the EPW-sextic $Y_A$.
   \end{prop}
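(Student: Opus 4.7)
The plan is to establish a direct bijection between $F$ (defined via the conic bundle) and $F_{V_4}$ (defined as the preimage of the pivot map), and then simply restrict the factorization of the pivot map through the double EPW-sextic (which is \cite[Theorem 4.7(c)]{dk-linear}).

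First I would identify $F$ with $F_{V_4}$. Since $X \subset \Fl(1,2,5)$ is the zero locus of a section of $\cO(1,0)\oplus \cO(0,1)\oplus\cO(0,2)$, its points are flags $\ell \subset \Pi$ with $\ell \subset V_4$ and $[\Pi] \in X_{10} \subset \Gr(2,V_5)$. The fiber of $\pi_1 : X \to \PP(V_4)$ over $[\ell]$ is thus the conic of $\Pi \supset \ell$ with $[\Pi] \in X_{10}$. A line in this conic is precisely a pencil of such 2-planes, i.e., a line in $\Gr(2,V_5)$ contained in $X_{10}$ with pivot $\ell$. Hence the surface $F$ of vertical lines in the conic bundle identifies canonically with the surface $F_{V_4}$ of lines in $X_{10}$ whose pivot lies in $\PP(V_4)$, and the discriminant involution (swapping the two components of a singular conic) corresponds to the fiber-swapping involution of the pivot map $\sigma$ restricted to $F_{V_4}$.

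Next I would restrict the factorization of \cite[Theorem 4.7(c)]{dk-linear},
\[
\sigma\colon F(X_{10}) \longrightarrow \tilde Y_A \xrightarrow{f_A} Y_A \hookrightarrow \PP(V_5),
\]
to lines whose pivot lies in $\PP(V_4) \subset \PP(V_5)$. This yields a factorization $F_{V_4} \to \tilde Y_{A,V_4} \to Y_{A,V_4}$ where $Y_{A,V_4} = Y_A \cap \PP(V_4)$ is a codimension two linear section of the EPW-sextic $Y_A \subset \PP(V_6) = \PP^5$ and $\tilde Y_{A,V_4} = f_A^{-1}(Y_{A,V_4})$ is the induced double cover. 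It remains to verify that, for generic $\PP(V_4)$, the first arrow is an isomorphism and that $Y_{A,V_4} = \Delta$. The former follows from the fact that $F(X_{10}) \to \tilde Y_A$ is a bijection over the complement of the top EPW stratum (which $\PP(V_4)$ avoids generically), and that both surfaces are smooth of the same dimension. For the latter: both $Y_{A,V_4}$ and $\Delta$ are sextic surfaces in $\PP^3$ (the first because $Y_A$ has degree $6$), their $40$ nodes arise in both cases from the intersection $Y_{A[2]} \cap \PP(V_4)$ with the second EPW stratum (which is the ramification locus of $f_A$), and the double covers $F \to \Delta$ and $\tilde Y_{A,V_4} \to Y_{A,V_4}$ agree through the pivot identification of Step 1. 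Hence $Y_{A,V_4} = \Delta$ scheme-theoretically.

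The main obstacle is not the bijection itself but the genericity argument in the last step: one must ensure that a codimension two linear section of $Y_A$ inherits exactly the singularity behavior of $\Delta$ (that is, $40$ nodes and nothing worse), which amounts to checking that $\PP(V_4)$ meets $Y_{A[2]}$ transversally in $40$ reduced points and misses the deeper stratum $Y_{A[3]}$. Both properties are open conditions and follow from a standard dimension count on the strata of a generic EPW-sextic, but they are the substantive input needed to translate the set-theoretic identification $F_{V_4} = F_{\Delta}$ into the scheme-theoretic statement claimed.
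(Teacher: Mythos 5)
Your overall route is the same as the paper's: identify the vertical lines of the conic bundle with the lines of $X_{10}$ whose pivot lies in $\PP(V_4)$, i.e.\ $F=\sigma^{-1}(\PP(V_4))$, and then restrict the factorisation of the pivot map through the double EPW-sextic from \cite[Theorem 4.7(c)]{dk-linear}. The first step and the final identification of the two double covers (via the tautological agreement of the pivot map with the map ``vertical line $\mapsto$ point of $\PP^3$ below it'') are fine, and your degree/node bookkeeping for $Y_{A,V_4}=\Delta$ is harmless even if it is not really needed once the involutions are matched.

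The one step that does not hold up as written is your justification that $F_{V_4}\to \tilde Y_{A,V_4}$ is an isomorphism for generic $\PP(V_4)$. You claim that $F(X_{10})\to \tilde Y_{A}$ is a bijection ``over the complement of the top EPW stratum, which $\PP(V_4)$ avoids generically.'' This conflates two different loci. The branch/ramification stratum $Y_A^{\geq 2}$ is a surface in $\PP(V_6)$ (a curve after cutting with $\PP(V_5)$), and a generic $\PP(V_4)$ does \emph{not} avoid it --- it meets it in exactly the $40$ points that you yourself invoke two sentences later as the source of the $40$ nodes; over that stratum the covering $f_A$ is $1\!:\!1$ instead of $2\!:\!1$, but the map $F(X_{10})\to\tilde Y_{A,V_5}$ is not the issue there. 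The locus where $F(X_{10})\to\tilde Y_{A,V_5}$ actually fails to be an isomorphism is the fibre over the single distinguished point $[v_0]\in\PP(V_5)$ given by the kernel of the two-form cutting out the hyperplane section of $\Gr(2,V_5)$ containing $X_{10}$; this is \cite[Corollary 5.2]{dk-linear}, which is the citation the paper uses. Once you replace your stratum argument by this: a generic $\PP(V_4)\subset\PP(V_5)$ misses the point $[v_0]$, hence $\sigma^{-1}(\PP(V_4))\cong \tilde Y_{A,V_4}$, and the rest of your argument goes through as in the paper.
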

   
   \begin{proof} We have seen that $F$ is the surface of lines in $X$ whose pivot is contained 
   in $\PP(V_4)$, that is $\sigma^{-1}(\PP(V_4))\subset F(X_{10})$. But by 
   \cite[Corollary 5.2]{dk-linear}, $F(X_{10})$ and $\tilde{Y}_{A,V_5}$ coincide outside
   the pre-images of the point  $[v_0]\in\PP(V_5)$ defined by the kernel of the two-form that 
   cuts out the linear section of $\Gr(2,V_5)$ containing $X_{10}$. In general $\PP(V_4)$ does not contain 
   $[v_0]$ and our claim follows. \end{proof}




\subsection{Another conic bundle with sextic discriminant} Let $X'$ in $\PP^3 \times \PP^6$, be the zero locus of a general section of $\cO(0,2) \oplus \cO(1,1) \oplus \wedge^2 \cQ_{\PP^3}(0,1)$. As described in \cite{bfmt}, item K3-31, we have that:


\begin{itemize}[leftmargin=*]
\item The first projection $\pi_1 \colon X' \to \PP^3 $ is  a conic bundle whose discriminant $\Delta'$ is a sextic hypersurface with $40$ nodes.
 \item $\pi_2\colon X' \to \PP^6$ is a blow-up $Bl_{S_{10}}(\QQ_1^5 \cap \QQ_2^5)$ of the intersection of two quadrics, for $S_{10}$ a K3 surface of genus 6.
\end{itemize}

The above description allows us to calculate the invariants of $F'$, that can be defined as the zero locus of a general section of some vector bundle, and show that they indeed coincide with the ones of $F$. We omit these calculations, since we will prove that $F'$ is isomorphic to some $F$ as in the the GM-20 case. 

\subsection{Comparing the conic bundles}\label{CB}
\subsubsection{The  conic bundle structure on GM-20}
Recall that GM-20 is the family of Fano fourfolds $X$ defined as zero loci of general sections of 
\begin{equation}\tag{GM-20}
\cO(1,0) \oplus \cO(0,1) \oplus  \cO(0,2) \xrightarrow{} \Fl(1,2,5).\end{equation}
The projection to $\Gr(2,5)$ is the blow-up of a quartic del Pezzo surface in a general Gushel--Mukai fourfold. 
The conic bundle structure on $X$ is given by the projection to $\PP^4=\PP(V_5)$. 
Indeed,  $X$ parametrizes flags $(U_1\subset U_2\subset V_5)$. The $\cO(1,0)$ factor 
imposes that $U_1$ belongs to some hyperplane $V_4\subset V_5$. 
The $ \cO(0,1)$ factor imposes 
that $U_2$ be isotropic with respect to some skew-symmetric form $\omega$. Being a generic 
skew-symmetric form in five variables, $\omega$ has a one dimensional kernel $K$ that 
is not contained in $V_4$. 
This implies that when $U_1$ is contained in $V_4$, its orthogonal $U_1^\perp$ in $V_5$ with respect to $\omega$ has constant rank four. Then $U_1\wedge U_1^\perp$ is a three-dimensional subspace of 
$\wedge^2V_5$. Finally, the  $\cO(0,2)$-factor defines a quadratic form on the latter space $\wedge^2V_5$,
and restricting it to $U_1\wedge U_1^\perp$ we get our conic bundle structure on $X$. 

Note that $U_1\wedge U_1^\perp\simeq U_1\otimes (U_1^\perp/U_1)$. In terms of vector bundles, 
$\omega$ defines a morphism from $\cO_{\PP(V_5)}(-1)$ to $\cQ_{\PP(V_5)}^\vee$, which is injective 
over $\PP(V_4)$. Let $\cN^\vee$ denote the quotient of this morphism restricted to $\PP(V_4)$.
Then the conic bundle structure is defined by 
$$q: \cO_{\PP(V_4)}\longrightarrow \mathrm{Sym}^2\cE^\vee,$$

\medskip \noindent
where the bundle  $\cE=\cN\otimes\cO_{\PP(V_4)}(-1)$. Note that the restriction of $\cQ_{\PP(V_5)}$
to $\PP(V_4)$ is the sum of $\cQ_{\PP(V_4)}$ with a trivial factor, so that $\cE^\vee$ fits into an exact sequence 
$$0\lra \cO_{\PP(V_4)}\lra \cO_{\PP(V_4)}(1)\oplus \cQ_{\PP(V_4)}^\vee(1)\lra\cE^\vee\lra 0.$$

\subsubsection{The  conic bundle structure on K3-31}
Recall that Fano fourfolds $X'$ of type K3-31 are obtained as zero loci of general sections of 
\begin{equation}\tag{K3-31}
\cO(0,2) \oplus \bigwedge^2 \cQ_{\PP^3}(0,1) \oplus \cO(1,1) \xrightarrow{} \PP(V_4) \times \PP(V_7).
\end{equation}
The last two factors cut out a rank-three subbundle $\cE'$ of the trivial bundle $V_7\otimes 
\cO_{\PP(V_4)}$ over $\PP(V_4)$. More precisely, the section of $ \bigwedge^2 \cQ_{\PP^3}(0,1)$
is defined by a generic element of $\wedge^2V_4\otimes V_7^\vee$ that allows us to identify $V_7$ with 
$\wedge^2V_4\oplus\CC K$. The section vanishes at $(\ell, m)\in \PP V_4 \times \PP V_7$ when 
$m\subset \ell\wedge V_4\oplus\CC K$, which defines a rank four bundle $\cP\simeq \cQ_{\PP(V_4)}(-1)\oplus \cO_{\PP(V_4)}$  over  $\PP(V_4)$.
Then the $ \cO(1,1)$ induces an injective morphism from $\cO_{\PP(V_4)}(-1)$ to $\cP^\vee$,
whose cokernel is $\cE'^\vee$. 

 We conclude that the conic bundle structure is defined by 
$$q_1: \cO_{\PP(V_4)}\longrightarrow \mathrm{Sym}^2\cE'^\vee,$$
where the bundle  $\cE'^\vee$ fits into an exact sequence 
\begin{equation}\label{eq: exact sequence for E' in K3-31}
0\lra \cO_{\PP(V_4)}(-1)\lra \cO_{\PP(V_4)}\oplus \cQ_{\PP(V_4)}^\vee(1)\lra\cE'^\vee\lra 0.
\end{equation}

Note furthermore that 
$h^0(\cO(0,2))=28$ whereas $h^0(\mathrm{Sym}^2\cE'^\vee)=h^0(\mathrm{Sym}^2 \cP^{\vee})=27$. Since we know that the image of the projection of $\cP^{\vee})$ onto $\PP(V_7)$ is a quadric. It follows that every conic bundle on $\PP(\cE'^{\vee})$  given by a section of $\mathrm{Sym}^2\cE'^{\vee}$ corresponds to a unique complete intersection of two quadrics in $\PP(V_7)$. It remains to check that all $\cE'$ fitting in an exact sequence \cref{eq: exact sequence for E' in K3-31} arises as the zero locus of a section of  $\cO(1,1)$ restricted to $\PP(\cP^{\vee})\subset \PP(V_4)\times \PP(V_7)$. Indeed, it is enough to observe that 
$h^0(\cO(1,1))=28$ whereas $h^0(\cP^{\vee}(1))=24$ and $\PP(\cP^{\vee})\subset \PP(V_4)\times \PP(V_7)$ is defined by the vanishing of a four-dimensional space of sections corresponding to the condition $l\wedge \bar{m}=0\in \bigwedge^3 V_4$, where $\bar{m}$ is the component of $m$ belonging to $\wedge^2 V_4$.

\subsubsection{How to connect them through hyperbolic reduction} Comparing the exact sequences that 
define $\cE^\vee$ and $\cE'^\vee$, we are led to define a quadric bundle structure $Y \to \PP^3$ on the rank five bundle 
$$\cG:= \cO_{\PP(V_4)}\oplus  \cO_{\PP(V_4)}(-1)\oplus \cQ_{\PP(V_4)}(-1)$$
on $\PP(V_4)$. There is a unique way to embed in $\cG$ the trivial line bundle  $\cN=\cO_{\PP(V_4)}$.
When it is isotropic, by hyperbolic reduction  we get a conic bundle $X$ of type GM-20. 
This happens in codimension one. 

On the other hand, there are many ways to embed in $\cG$ the line bundle  $\cN'=\cO_{\PP(V_4)}(-1)$.
Such an embedding is given by $\ell\mapsto \alpha(\ell)\oplus \beta\ell \oplus \gamma\wedge\ell$
where $\alpha$ is a linear form, $\beta$ a scalar and $\gamma$ a vector (recall that  $\cQ_{\PP(V_4)}(-1)
\simeq V_4\wedge \cO_{\PP(V_4)}(-1)\subset \wedge^2V_4\otimes \cO_{\PP(V_4)}$). This gives an eight-dimensional family of embeddings, and it is easy to see that for each such embedding, there is a codimension ten family of quadric bundle structures  for which it is an isotropic line bundle.
We conclude that quadric structures on the bundles $\cG$ admitting an isotropic $\cN'=\cO_{\PP(V_4)}(-1)$ live in codimension two. 
By hyperbolic reduction through $\cN'$, we then get a conic bundle $X'$ of type K3-31.

For  a quadric threefold bundle with both isotropic subbundles $\cN=\cO_{\PP(V_4)}$ and 
$\cN'=\cO_{\PP(V_4)}(-1)$,  \cref{section} implies that  the conic bundle $X$ has a section over the hypersurface $D$ with equation $q(\cN,\cN')=0$,
which is a plane $\PP (V_3)$. Moreover this plane is embedded linearly in the Grassmannian, hence by 
sending $U_1\subset V_3$ to a plane $U_1\wedge L_0$ for some plane $L_0\not\subset V_3$. We get what is called 
a $\sigma$-plane \cite[7.1]{dim}. Gushel--Mukai fourfolds containing a $\sigma$-plane are mapped by the period map to 
the divisor $\mathcal{D}''_{10}$ in the period domain. The corresponding \HK{} fourfold is a square of a K3 surface of genus $6$ (see \cite[Remark 5.29]{dk-linear}).

On the other hand, on the side of $X'$, the fact that the trivial factor 
$\cN=\cO_{\PP(V_4)}$ is isotropic means
 by construction that the quadric $Q_1$ in $\PP(V_7)$ defined by the section of the line bundle 
$\cO(0,2)$, contains the point $[K]$. Since the other quadric $Q_2$ is the cone, with vertex $[K]$
over $G(2,V_4)\subset \PP(\wedge^2V_4)$, we deduce that $[K]$ is a singular point of $Q_1\cap Q_2$.
Moreover $X'$ contains a plane that is contracted to $[K]$ by the projection to  $\PP(V_7)$. 

\begin{prop}\label{prop:GM20-K331-hypred}
If $X'$ is a general fourfold of type K3-31 that contains a plane contracted by the projection to $\PP(V_7)$, then there exists $X$ of type GM-20 such that $X$ and $X'$ are obtained by hyperbolic reduction from the same quadric surface 
bundle $Y\ra \PP^3$. In that case the Gushel--Mukai fourfold associated to $X$ contains a $\sigma$-plane.

Conversely if $X$ is a general GM-20 fourfold associated to a Gushel--Mukai fourfold containing a $\sigma$-plane, then there exists $X'$ of type K3-31 such that $X$ and $X'$ are obtained by hyperbolic reduction from the same quadric surface 
bundle $Y \to \PP^3$. In that case $X'$ contains a plane contracted by the projection to $\PP(V_7)$.
\end{prop}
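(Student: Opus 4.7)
The plan is to formalize the geometric correspondence already outlined in the preceding paragraphs of \cref{CB}, treating the two implications separately via the quadric threefold bundle $Y \to \PP(V_4)$ on $\cG = \cO \oplus \cO(-1) \oplus \cQ_{\PP(V_4)}(-1)$.

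For the forward direction, I would start from a K3-31 fourfold $X'$ containing a plane contracted by the projection to $\PP(V_7)$. The construction in \cref{CB} realizes $X'$ as the hyperbolic reduction of some quadric bundle structure on $\cG$ via the isotropic subbundle $\cN' = \cO_{\PP(V_4)}(-1)$. The first step is to show that the existence of the contracted plane is equivalent to the trivial subbundle $\cN = \cO_{\PP(V_4)} \subset \cG$ being isotropic with respect to the quadratic form on $\cG$. This amounts to unwinding the identification $V_7 \simeq \wedge^2 V_4 \oplus \CC K$: the quadric $Q_1 \subset \PP(V_7)$ contains $[K]$ precisely when the quadratic form on $\cG$ vanishes on the trivial factor $\cN$, since the fiber of $\cN$ over any point of $\PP(V_4)$ is exactly the line $\CC K$. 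Once $\cN$ is isotropic, I perform hyperbolic reduction of $Y$ via $\cN$; by the description of the quadric bundle structure of GM-20 recalled in \cref{CB}, the result is a fourfold $X$ of type GM-20.

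With both $\cN$ and $\cN'$ isotropic, I then invoke \cref{section} to produce a section of $X \to \PP(V_4)$ over the locus $D = \{q(\cN,\cN') = 0\}$. Since $\cN^\vee \otimes \cN'^\vee \simeq \cO_{\PP(V_4)}(1)$, this locus is a plane $\PP(V_3)$. Tracking the section through the flag description of $X$ as the zero locus in $\Fl(1,2,V_5)$, each fiber point is a flag $(U_1,U_2)$ with $U_1 \in \PP(V_3)$; the compatibility of the two isotropic factors forces $U_2$ to have the form $U_1 \wedge L_0$ for a fixed line $L_0 \not\subset V_3$, which is the defining property of a $\sigma$-plane in the underlying Gushel--Mukai fourfold.

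For the converse, I would reverse the argument. Starting from a GM-20 fourfold $X$ whose Gushel--Mukai fourfold contains a $\sigma$-plane, the $\sigma$-plane provides a section of $X \to \PP(V_4)$ over a linear plane $\PP(V_3) \subset \PP(V_4)$. Writing $X$ as the hyperbolic reduction of $Y$ on $\cG$ via $\cN = \cO$, this section lifts to an isotropic rank-one subsheaf of $\cG$ over $\PP(V_3)$ transverse to $\cN$. The task is then to extend this to a global isotropic line subbundle $\cN' \subset \cG$ on all of $\PP(V_4)$ and identify its isomorphism class as $\cO_{\PP(V_4)}(-1)$; a parameter count matching the codimension-two condition on quadric bundles on $\cG$ identified in \cref{CB} guarantees that for general such $X$ this extension is possible, and restricting to a line $\PP^1 \subset \PP(V_4)$ pins down the isomorphism type of $\cN'$. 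Hyperbolic reduction of $Y$ via $\cN'$ then yields $X'$ of type K3-31, and the isotropy of $\cN$ translates back to $[K] \in Q_1$, hence to a plane in $X'$ contracted to $[K]$. The main obstacle is this global extension step in the converse: the section coming from the $\sigma$-plane is a priori defined only over $\PP(V_3)$, and promoting it to a global isotropic $\cN' \simeq \cO(-1)$ requires a careful parameter count matched against the moduli on both sides, or equivalently a cohomological vanishing of the relevant obstruction.
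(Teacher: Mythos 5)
Your proposal does not follow the paper's route, and in its present form it has a genuine gap precisely at the step you yourself flag as "the main obstacle." The paper does not argue by constructing $\cN'$ from the $\sigma$-plane at all. Instead it works entirely in the parameter space $H^0(\mathrm{Sym}^2\cG^\vee)$: it introduces the loci $\mathcal S_1$ (forms for which some $\cO\to\cG$ is isotropic, codimension $1$), $\mathcal S_2$ (forms for which some $\cO(-1)\to\cG$ is isotropic, codimension $2$) and $\mathcal S_3=\mathcal S_1\cap\mathcal S_2$ (codimension $3$), and then compares codimensions on both sides: reductions of forms in $\mathcal S_3$ land in the locus of GM-20 fourfolds whose Gushel--Mukai fourfold contains a $\sigma$-plane, which is irreducible of codimension $2$ in moduli, matching the codimension of $\mathcal S_3$ in $\mathcal S_1$; and they land in the locus of K3-31 fourfolds with a contracted plane, which is irreducible of codimension at least $1$, matching the codimension of $\mathcal S_3$ in $\mathcal S_2$. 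Irreducibility plus the codimension match forces the reduction maps from $\mathcal S_3$ to dominate both special loci, which proves both implications at once. This count is not an optional consistency check --- it \emph{is} the proof, and it is exactly what your converse direction defers to without carrying out. The section over $\PP(V_3)$ coming from the $\sigma$-plane gives you an isotropic line of $\cE$ (hence of $\cG$) only over a plane, and there is no cohomological reason offered why it should propagate to a global isotropic $\cO(-1)\subset\cG$; the only argument available is the dominance argument above.

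Your forward direction also leans on an unproved "equivalence" between the existence of the contracted plane in $X'$ and the isotropy of the trivial factor $\cN=\cO\subset\cG$. The paper only establishes one implication of this (isotropic $\cN$ implies $[K]\in Q_1$, hence a singular point of $Q_1\cap Q_2$ and a contracted plane) in the discussion \emph{preceding} the proposition, and it is stated for the quadric bundle $Y$ constructed there, not for an arbitrary hyperbolic extension of a given $X'$. Since the hyperbolic extension of a conic bundle to a quadric threefold bundle is not unique, the value $Q(\cN,\cN)$ is not a priori an invariant of $X'$ alone, so the reverse implication (contracted plane implies that \emph{some} extension has $\cN$ isotropic) needs an argument --- which, again, the paper supplies through the $\mathcal S_3$ dominance count rather than pointwise. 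Your identification of $D=\{q(\cN,\cN')=0\}$ as a plane and of the resulting section as a $\sigma$-plane is correct and agrees with the paper's discussion, but it only produces the $\sigma$-plane once both isotropic subbundles are already in hand; it does not help you obtain them.
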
 
\begin{proof}
    Consider $\mathcal S_1, \mathcal S_2\subset H^0(\mathrm{Sym}^2\mathcal G^{\vee} )$ the loci of sections inducing quadric bundles which admit an isotropic section of type $\mathcal O\to \mathcal G $ or an isotropic section of type $\mathcal O(-1)\to \mathcal G $ respectively. By the discussion above $\mathcal S_1$ is a variety of codimension $1$ in $H^0(\mathrm{Sym}^2\mathcal G^{\vee} )$ and $\mathcal S_2$ is a variety of codimension $2$ in $H^0(\mathrm{Sym}^2\mathcal G^{\vee} )$. Their intersection $\mathcal S_3=\mathcal S_1\cap \mathcal S_2$ is a variety of codimension $3$. 
    
    Any quadric bundle corresponding to a section in  $\mathcal S_1$ admits a quadric reduction with respect to the isotropic section $\mathcal O\to \mathcal G $. This leads to a fourfold of type GM-20 and every fourfold of type GM-20 appears in this way. Moreover these quadric reductions applied to quadric bundles associated to sections in $S_3$ lead to fourfolds in GM-20 which correspond to Gushel--Mukai fourfolds containing a plane. The latter locus is an irreducible variety of codimension $2$ in the moduli space of Gushel--Mukai fourfolds. Since $\mathcal S_3$ is also codimension $2$ in $\mathcal S_1$ it follows that every fourfold of type GM-20 corresponding to a Gushel--Mukai fourfold containing a plane appears as a reduction of a sixfold given by a section in $\mathcal S_3$. 
    
    Similarly, every fourfold of type K3-31 appears as a reduction by an isotropic section $\mathcal O(-1)\to \mathcal G $ of a sixfold given by a section in $\mathcal S_2$. Moreover, general sections in  $\mathcal S_3$ lead via their reduction to fourfolds of type K3-31 containing a plane contracted by the projection to $\mathbb P^7$. Since   $\mathcal S_3$ has codimension $1$ in $\mathcal S_2$ and since the locus of fourfolds of type K3-31 containing a plane contracted by the projection to $\mathbb P^7$ is irreducible of codimension at least $1$ in the moduli space of all fourfolds of type K3-31, it follows that every such fourfold appears as a reduction of a sixfold given by a section in $\mathcal S_3$. This concludes the proof in both directions.
\end{proof}

\begin{corollary}\label{coro:HS-forGM20-K331}
If $X$ and $X'$ are as in \cref{prop:GM20-K331-hypred}, then $X$ and $X'$ are $K(\PP^3)$-birationally equivalent, and $F \simeq F'$.
Moreover, we have an isometry $H^4_{pr}(X) \simeq H^4_{pr}(X')$.
\end{corollary}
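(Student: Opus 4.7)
My plan is to mirror exactly the argument already used for \cref{coro:HS-forC4-R62}, since the logical structure is identical once \cref{prop:GM20-K331-hypred} is in place.

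First, for the $K(\PP^3)$-birational equivalence, I would invoke \cite[Prop.~1.1]{kuznethyperbolic}: two quadric bundles related by hyperbolic reduction from a common quadric bundle $Y \to B$ over the same base have the same (even) Clifford algebra, and Kuznetsov's result implies that they are $K(B)$-birationally equivalent. Since by \cref{prop:GM20-K331-hypred} both $X$ and $X'$ are obtained by hyperbolic reduction from the same quadric threefold bundle $Y \to \PP^3$ (via the isotropic subbundles $\cN=\cO_{\PP(V_4)}$ and $\cN'=\cO_{\PP(V_4)}(-1)$ respectively), this applies directly.

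Next, for the isomorphism $F \simeq F'$ of discriminant double covers, I would use the classical fact that hyperbolic reduction preserves both the discriminant divisor $\Delta \subset B$ and the sheaf of even parts of the Clifford algebra. In particular, the double cover of $\Delta$ determined by the discriminant of the conic bundle (equivalently, the moduli of lines in fibres) coincides with the discriminant double cover of the ambient quadric threefold bundle $Y$. Hence both $F$ and $F'$ are canonically isomorphic to the discriminant double cover of $Y \to \PP^3$, and this common identification intertwines the respective involutions on $F$ and $F'$.

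Finally, for the Hodge isometry $H^4_{pr}(X) \simeq H^4_{pr}(X')$, I apply \cref{prop:cbisometry} to both conic bundles: up to a factor $-2$, each of $H^4_{pr}(X,\ZZ)$ and $H^4_{pr}(X',\ZZ)$ is isometric, as a Hodge structure, to $H^2(F)^-=H^2(F')^-$. Composing these two isometries with the identification $F\simeq F'$ compatible with the involutions, we obtain the desired Hodge isometry.

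The only genuinely delicate point is the identification of the two discriminant double covers $F$ and $F'$ with that of $Y$. The \emph{nodes} of $\Delta$ correspond to points where the quadric threefold in $Y$ has corank $2$, and one must check that the moduli space of lines in the conic fibre at a smooth point of $\Delta$ really agrees, via hyperbolic reduction, with the parameter space of isotropic subspaces of $Y$ modulo $\cN$ (resp.\ $\cN'$); but this is a local, standard computation once the setup of \cref{prop:GM20-K331-hypred} has been established, so no new obstacle arises beyond what was already overcome in the C-4/R-62 case.
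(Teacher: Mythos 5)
Your proposal is correct and follows essentially the same route as the paper: the $K(\PP^3)$-birational equivalence is deduced from \cite[Prop.~1.1]{kuznethyperbolic} applied to the common quadric threefold bundle $Y\to\PP^3$ of \cref{prop:GM20-K331-hypred}, and the Hodge isometry is obtained by applying \cref{prop:cbisometry} to both conic bundles and composing through $H^2(F)^-\simeq H^2(F')^-$ via the involution-compatible identification $F\simeq F'$. The extra detail you supply on why hyperbolic reduction preserves the discriminant double cover is a correct elaboration of what the paper leaves implicit.
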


\begin{proof}
    The first part is a consequence of \cite[Prop.\ 1.1]{kuznethyperbolic}. The second is a consequence of \cref{prop:cbisometry}, via $H^2(F)^-$ and the isomorphism $F \simeq F'$ compatible with the involution.
\end{proof}

Note that although \cref{prop:GM20-K331-hypred} holds only for a subfamily of fourfolds of type K3-31 we can still prove that even nodal surfaces of K3 type associated to fourfolds of type K3-31 are also associated to some special fourfolds of type GM-20.
\begin{prop}
    Let $X'$ be a general fourfold of type K3-31 and let $\Delta_{X'}\subset \mathbb P^3$ be the discriminant locus of the associated conic fibration. Then there exists $Y$ a fourfold of type GM-20 such that the discriminant locus of its conic fibration is $\Delta'$.   
\end{prop}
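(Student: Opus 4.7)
The plan is to use the K3 type Hodge structure on $H^2(F')^-$ to recover an EPW sextic whose codimension two linear section gives $\Delta'$, and then to build the desired fourfold $Y$ of type GM-20 from the Gushel--Mukai fourfold $X_{10}$ associated to this EPW sextic.

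The first (and main) step is to show that the K3 type Hodge structure on $H^2(F')^-$ --- provided by \cref{prop:cbisometry}, isometric up to a factor $-2$ with the primitive middle cohomology $H^4_{pr}(X',\ZZ)$ --- is isomorphic to the primitive middle cohomology of some Gushel--Mukai fourfold. For the subfamily K3-31' treated in Theorem \hyperlink{main_theorem_b_K3-31}{B}, this is immediate from the $K(\PP^3)$-birational equivalence provided by hyperbolic reduction, which gives a Hodge isometry via \cref{prop:cbisometry} applied to the paired GM-20 fourfold. The main obstacle is the extension to a general $X'$ of type K3-31, for which no direct hyperbolic reduction is available. The natural approach is by deformation: K3-31' is a divisor in the moduli of K3-31, the period map is holomorphic, and the locus of K3 type Hodge structures of GM type forms a closed irreducible component of the relevant period domain. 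These facts together should imply that the K3 type Hodge structure of a general K3-31 fourfold remains of GM type, and therefore matches that of some $X_{10}$.

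Once such an $X_{10}$ is produced, the EPW sextic $Y_A \subset \PP^5$ associated to $X_{10}$ enters the picture via the analogue of \cref{codim2}: the double cover $F' \to \Delta'$ is realised as the restriction $\tilde{Y}_{A, V_4} \to Y_{A, V_4}$ of the double EPW covering to a suitable $\PP(V_4) \subset \PP(V_5) \subset \PP^5$, so that $\Delta' = Y_A \cap \PP(V_4)$. Via the pivot map of \cite[Theorem 4.7 (c)]{dk-linear}, the hyperplane $V_4 \subset V_5$ then singles out a del Pezzo surface of degree four inside $X_{10}$, and the blow-up $Y := Bl_{dP_4}(X_{10})$ yields a Fano fourfold of type GM-20 whose conic bundle structure over $\PP(V_4) \simeq \PP^3$ has discriminant $Y_A \cap \PP(V_4) = \Delta'$, as required. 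The expected count of six GM-20 fourfolds associated to a general K3-31 discriminant mentioned in the introduction should correspond to the finitely many choices of such $\PP(V_5)$ through $\PP(V_4)$ that lie in the EPW-compatible configuration, but for the present existence statement it suffices to produce one.
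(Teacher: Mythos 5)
Your strategy --- recover an EPW sextic from the K3 type Hodge structure on $H^2(F')^-$ and then cut it down to produce $\Delta'$ --- is genuinely different from the paper's, and it has two gaps. First, the deformation step does not work as stated: knowing that the Hodge structure is ``of GM type'' along the divisor K3-31$'$ does not propagate to the general member of K3-31. For that you would need the locus of GM-type periods to be open in (or to contain) the image of the period map of the whole family; a closed locus meeting that image in a divisor says nothing about the generic point. Second, and more seriously, even granting a Hodge isometry between $H^2(F')^-$ and the primitive cohomology of some $X_{10}$, this does not yield an isomorphism of the actual surfaces: the assertion that the double cover $F'\to\Delta'$ ``is realised as the restriction $\tilde{Y}_{A,V_4}\to Y_{A,V_4}$'' is precisely the geometric content of the proposition, and no Torelli-type theorem for these discriminant surfaces is invoked or available to deduce it from an isometry of Hodge structures. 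Your argument therefore assumes what it must prove at its key step.

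The paper avoids periods altogether and argues directly. Writing $X'$ as the blow-up of $Q_1\cap Q_2\subset\PP^6$ along the genus-$6$ K3 surface $S$, the conic fibration is induced by the rational map $\phi\colon\PP^6\dashrightarrow\PP^5$ given by quadrics through $S$; its fibres are conics meeting $S$ in four points, and a fibre degenerates into two lines exactly over the points of $L\cap M$, where $M$ is the special (singular) EPW sextic arising as the image of $\psi\colon S^{[2]}\dashrightarrow\PP^5$ and $L\simeq\PP^3$ is the space of hyperplanes of quadrics containing the pencil through $Q_1\cap Q_2$. Hence $\Delta'=L\cap M$ by construction. The GM-20 fourfolds are then produced by projective duality: the line orthogonal to $L$ meets the dual EPW sextic in six points, each giving a singular hyperplane section of $M$ containing $\Delta'$ and hence, by \cref{codim2}, a fourfold of type GM-20 with discriminant $\Delta'$. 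Note also that the resulting Gushel--Mukai fourfolds are special (their periods lie in $\mathcal{D}''_{10}$), a feature your construction would not detect.
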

\begin{proof} Let us fix a general K3 surface $S$ of genus 6 in $\mathbb P^6$ and let $M$ be the EPW sextic associated to $S$. More precisely consider the rational map $\psi\colon S^{[2]}\to \PP(H^0(I_S(2),\PP^6)^{\vee})$ associating to a scheme $Z$ of length 2 in $ S$ the hyperplane of quadrics containing $S$ and the line spanned by $Z$. The image of that map is an EPW sextic that we call $M$. Let now $X'$ be the intersection $Q_1\cap Q_2$ of two quadrics $Q_1$, $Q_2$ containing $S\subset \PP^6$. Consider now the codimension two linear section $L\cap M$ of $M$ where $L$ is the linear space of hyperplanes in $H^0(I_S(2),\PP^6)$ containing the space $H^0(I_{Q_1\cap Q_2}(2),\PP^6)$. We claim that $L\cap M\simeq \Delta'$. Indeed, consider the rational map $\phi\colon \PP^6\setminus S \to \PP^5$ defined by quadrics containing the K3 surface $S$. It maps $Q_1\cap Q_2$ to $L$ and its fibers are conics meeting the K3 surface $S$ in four points and form the rational conic fibration corresponding to K3-31. In that case, general points of $\Delta'$ are those points in $L$ over which the fiber is a reducible conic consisting of two lines $l_1 \cup l_2$ each meeting the K3 surface in 2 points (there is no line meeting $S$ in more than two points) on $Q_1\cap Q_2$. But for any line $l$ meeting $S$ in two points the image of $l$ through $\phi$ is the same as the image via $\psi$ of the pair of points $l\cap S$. It follows that $\Delta'= L\cap M$.
It follows that the discriminant loci of conic fibrations on general fourfolds of type K3-31 are exactly the general codimension 2 sections of special (singular) double EPW sextics  which are birational to $S^{[2]}$ for $S$ a K3 surface $S$ of genus 6.

On the other hand, we have seen in \cref{codim2} that the discriminant locus of a fourfold of type GM-20 is just a general hyperplane section of the singular hyperplane section of its corresponding EPW sextic defining the Hilbert scheme of lines on the GM fourfold. It remains to observe that for a general codimension 2 section of an EPW sextic $X$ admits six singular hyperplane sections of $X$ containing it; these are given by the intersection of the orthogonal line to the codimension 2 section with the dual EPW sextic. In particular, also for a codimension 2 linear section of a special EPW sextic corresponding to $S^{[2]}$ for $S$ a K3 surface $S$ of genus 6 we will have six hyperplane sections  corresponing to points on the dual EPW sextic each corresponding to a special fourfold of type GM-20 where the Gushel Mukai fourfold corresponds to a point in the divisor $\mathcal D''_{10}$ in the period domain.  The codimension 2 section will then appear as a discriminant locus of each of the six fourfolds of type GM-20.

It follows that an even nodal surface of K3 type arising as a discriminant locus of the conic bundle structure on a general fourfold of type K3-31 arises also as a discriminant locus of the conic bundle structures of six fourfolds of type GM-20.   
\end{proof}

We thus have concluded the proof of \hyperlink{main_theorem_b}{Main Theorem B} for this case.
\begin{remark}
    Note that all fourfolds of type K3-31 as well as all fourfolds of type GM-20 corresponding to Gushel--Mukai fourfolds containing a plane are known to be rational varieties i.e.\ in particular they are all birational to each other. However, the problem of birationality over $K(\PP^3)$ of a general fourfold of type K3-31 with one of the six fourfold of type GM-20 inducing the same discriminant locus still remains open and has probably a negative answer. It is in fact not hard to check that the cokernel sheaf of a conic bundle associated to a general fourfold of type K3-31 is different from cokernel sheaves of the conic bundles associated to the six fourfolds of type GM-20 with the same discriminant locus, this by results of Kuznetsov \cite{kuznetconicbundles} is enough to conclude that these conic bundles are not connect by hyperbolic reduction with each other (see \cref{openProblems} for a more detailed discussion).
\end{remark}

\section{20-Nodal quartics on quadrics and Gushel--Mukai fourfolds}
\label{sect20nodal}
   
\subsection{Blow-up of a GM along a quadric and nodal quartics} Let $X_{10}$ be a general Gushel--Mukai fourfold, and $\QQ^3$ the quadric threefold. We first recall how to construct a conic bundle structure $X_{10} \dashrightarrow \QQ^3$. Inside $\Gr(2,4) \times \Gr(2,5)$, let $X$ be the zero locus of a general section of $\cO(0,2) \oplus \cQ_{\Gr(2,4)} \boxtimes \cU^\vee_{\Gr(2,5)} \oplus \cO(1,0)$.
As described in \cite{bfmt}, we have that:


\begin{itemize}[leftmargin=*]
    \item the first projection $\pi_1 \colon X \to \Gr(2,4)$ is a conic bundle on $\QQ^3$ with discriminant divisor $\Delta$ a quartic surface with $20$ nodes.
    \item The second projection $\pi_2 \colon X \to \Gr(2,5)$ is a blow-up $Bl_{\PP^1 \times \PP^1}X_{10}$  of a Gushel--Mukai fourfold along a $\sigma$-quadric surface. (Remember from \cite[section 3]{dim} that 
   a general Gushel--Mukai fourfold contains a unique $\sigma$-quadric surface.)
\end{itemize}

\medskip The description of $X$ is as follows. Start with two vector spaces $V_1$ and $V_4$, and let $V_5=V_1\oplus V_4$.  Inside $\Gr(2,V_4)\times \Gr(2,V_5)$ lies the Grassmann
bundle $\Gr(2,\cO  \oplus \cU)$ over $\Gr(2,V_4)$, where $\cU$ denotes the tautological rank two bundle over $\Gr(2,V_4)$ and $\cO$ is the trivial line bundle $V_1 \otimes \cO$. Then $X$ is the zero locus of sections of the line bundles $\cO(1,0)$
and $\cO(0,2)$. The former reduces us to $\QQ^3\subset \Gr(2,V_4)$, and the latter to 
conics inside the fibres $\Gr(2,\cO  \oplus \cU)=\PP(\cO \oplus \cU^\vee)$. Hence the conic bundle structure. 

The discriminant double cover $F \to \Delta$ we want to describe parametrizes lines in $X$, contracted by $\pi_1$. 
So we should be looking at the fibre bundle $\cL=\Gr(1,V_1\oplus L)$ over $\Gr(2,V_4)$,
that parametrizes lines in $\Gr(2,V_5)$. Indeed for any $\ell\subset V_1\oplus L$, the 
corresponding line is the projectivisation of $\ell\wedge (V_1\oplus L)\simeq \ell\otimes 
(V_1\oplus L)/\ell$. So the surface $F$ is defined inside the $6$-dimensional fibre bundle $\cL$
as the zero locus of a section of the rank four vector bundle  $\cO(1,0)\oplus 
S^2(\ell\wedge (V_1\oplus L))^\vee$. This  description of $F$ as the zero loci of a section of a vector bundle allows us to compute the numerical invariants of this surface.

\begin{prop} 
The surface $F$ is a minimal surface of general type. 
The numerical invariants of $F$ are 
$$\chi(\cO_F)=7, \quad e(F)=68, \quad K_F^2=16, \quad \chi(-K_F)=23.$$
Moreover, $\chi(T_F)=-38$.
\end{prop}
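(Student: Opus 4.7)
The plan is to mirror the strategy used in the analogous statements for families C-4 and GM-20 earlier in the paper. There are two parts to the proof: showing that $F$ is minimal of general type, and computing the listed numerical invariants from the explicit zero-locus description.

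For the first part, $F\to \Delta$ is a flat double cover of the $20$-nodal quartic surface $\Delta\subset \QQ^3$ (of degree $8$ in $\PP^4$), ramified exactly along the $20$ nodes. Repeating the argument of \cite{hu21} that was used in the quintic and sextic cases, the pullback of the canonical class of the minimal desingularisation of $\Delta$ (which, since nodes are crepant, coincides with $H|_{\tilde\Delta}$ by adjunction on $\QQ^3$) is ample. Combined with the branched-cover formula, this shows that $K_F$ itself is ample, so $F$ is minimal and of general type.

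For the numerical invariants, I would exploit the description of $F$ as the zero locus, inside the six-dimensional fibre bundle $\cL=\Gr(1,V_1\oplus \cU)$ over $\Gr(2,V_4)$, of a general section of the rank-four vector bundle
$$\cE_F \;=\; \cO(1,0)\,\oplus\, S^2\bigl(\ell\wedge (V_1\oplus L)\bigr)^\vee.$$
Smoothness follows from a fibrewise Bertini argument: the summand $\cO(1,0)$ restricts the base to $\QQ^3\subset \Gr(2,V_4)$, while the second summand restricts the fibres to smooth conics in $\PP^2$. By adjunction, $K_F=(K_\cL+c_1(\cE_F))|_F$, so $K_F^2$ and $e(F)=c_2(T_F)$ can be written as intersection numbers of polynomial expressions in Chern classes of $T_\cL$ and $\cE_F$ against the top Chern class $c_4(\cE_F)$. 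Using the identification $\ell\wedge (V_1\oplus L)\simeq \ell\otimes (V_1\oplus L)/\ell$ as a rank-two bundle on $\cL$, all the required Chern classes are determined by the two tautological sequences of the nested Grassmann bundles defining $\cL\to \Gr(2,V_4)\to \mathrm{pt}$. Carrying out the computation in Macaulay2 with the Schubert2 package (as in \cite{M2}) yields $K_F^2=16$ and $e(F)=68$; the remaining quantities then follow formally: $\chi(\cO_F)=(K_F^2+e(F))/12=7$ by Noether's formula, $\chi(-K_F)=\chi(\cO_F)+K_F^2=23$ by Riemann--Roch on $F$, and $\chi(T_F)=(7K_F^2-5e(F))/6=-38$ by Hirzebruch--Riemann--Roch applied to the tangent bundle.

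The main obstacle I anticipate is purely bookkeeping: one has to keep track of two nested Grassmann-bundle structures when writing down the tautological classes, and in particular express $\ell\wedge (V_1\oplus L)$ correctly as a twist of the tautological quotient on $\cL$ rather than as an abstract exterior power. Once this setup is in place, the numerical output is completely mechanical.
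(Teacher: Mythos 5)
Your proposal is correct and follows essentially the same route as the paper: ampleness of $K_F$ is deduced from the finite double cover of the quartic surface in $\QQ^3$ ramified only at the $20$ nodes (the paper invokes Nakai--Moishezon for the same conclusion), and the numerical invariants are extracted from the description of $F$ as the zero locus of a section of $\cO(1,0)\oplus S^2(\ell\wedge(V_1\oplus L))^\vee$ on the six-dimensional bundle $\cL$ over $\Gr(2,V_4)$ via a Macaulay2 Chern-class computation. Your closing formulas $\chi(\cO_F)=(K_F^2+e(F))/12$, $\chi(-K_F)=\chi(\cO_F)+K_F^2$ and $\chi(T_F)=(7K_F^2-5e(F))/6$ all check out against the stated values.
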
 

\begin{proof}
As a double cover of a quartic surface in $\QQ^3$ (i.e.\ a complete intersection of type $(2,4)$ in $\PP^4$) ramified in 20 nodes, the canonical divisor of $F$ is ample from the Nakai--Moishezon criterion.
\end{proof}

\begin{corollary}\label{cor:F-Simply-conn-GM21}
The surface $F$ is algebraically simply connected.
\end{corollary}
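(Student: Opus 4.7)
Following the same strategy as in \cref{cor:F-Simply-conn-C4,cor:F-Simply-conn-GM20}, the plan is to apply the criterion of \cite[Cor.\ 4]{Xi87}, which guarantees algebraic simple connectedness of a minimal surface of general type as soon as a numerical inequality relating $K_F^2$ and $\chi(\cO_F)$ holds. The main step is then to verify the hypothesis for the invariants $K_F^2 = 16$ and $\chi(\cO_F) = 7$ computed in the preceding proposition. In the variant $K_F^2 < 8(\chi(\cO_F) - 2)$ of the bound used in the analogous arguments above, we obtain $16 < 40$, which should directly yield the conclusion.

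Should the precise form of Xiao's bound turn out not to cover this case, one can instead appeal to the geometric description from \hyperlink{main_theorem_a}{Main Theorem A}: $F$ is realised as a section of a double EPW cube $M \subset \Gr(3,6)$ cut out by a special $\Gr(2,4)$. Since $M$ is a smooth \HK{} sixfold, it is simply connected, and a Lefschetz-type argument applied to the embedding $F \hookrightarrow M$---for instance Sommese's theorem for ample vector bundles applied to the rank-$4$ bundle on $M$ whose vanishing locus is $F$---would yield $\pi_1(F) \cong \pi_1(M) = 1$.

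The hard part in this alternative approach is verifying the positivity hypothesis. Equivalently, one may work directly on the ambient $\cL = \Gr(1, V_1 \oplus \cU)$, where $F$ is the zero locus of $\cO(1,0) \oplus S^2(\ell \wedge (V_1 \oplus L))^\vee$; here the summand $\cO(1,0)$ is only nef (it is trivial along the fibres of $\cL \to \Gr(2, V_4)$), so one would need to argue that the mixed positivity of this rank-$4$ bundle is still sufficient for a Sommese-type conclusion, or invoke a generalisation to the nef case. In either version, the simply connectedness of the ambient $\cL$ (a $\PP^2$-bundle over the Grassmannian) then transfers to $F$.
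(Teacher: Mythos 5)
Your first route is the paper's proof: the authors invoke \cite[Cor.\ 4]{Xi87} together with the invariants $K_F^2=16$, $\chi(\cO_F)=7$ from the preceding proposition, exactly as you do. One caveat worth flagging: the inequality you quote, $K_F^2<8(\chi(\cO_F)-2)$, is not the one the paper writes. The paper's proof asserts $3K_F^2<8(\chi(\cO_F)-2)$, which with these invariants reads $48<40$ and is \emph{false}; your version reads $16<40$ and holds. One of the two forms misquotes Xiao's criterion, so the argument is only as good as the exact hypothesis of \cite[Cor.\ 4]{Xi87}, which you should pin down before relying on it. (For the C-4 surface, where $K^2=10$ and $\chi=6$, both forms happen to hold, so that earlier case does not discriminate between them; for GM-20, with $K^2=48$ and $\chi=12$, the paper's form also fails while yours holds.)

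Your fallback route, on the other hand, does not work, and the paper itself says so: in the open-problems section the authors remark that in all cases \emph{except} C-4 and GM-21 one can use the Lefschetz theorem to deduce topological simple connectedness from the embedding into a \HK{} manifold, and they explicitly pose the topological simple connectedness of the GM-21 surface as an open problem. The obstruction is the one you half-anticipate: $F$ sits in the double EPW cube as a section by a \emph{special} $\Gr(2,4)$, not a general ample linear section, and in the ambient description the summand $\cO(1,0)$ of the rank-four bundle cutting out $F$ is only nef along the fibres of $\cL\to\Gr(2,V_4)$, so no Sommese-type statement applies off the shelf. So the Xiao criterion is the only available argument here, and the entire weight of the proof rests on getting its numerical hypothesis right.
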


\begin{proof}
This follows again from the fact that $3K^2 < 8(\chi(\cO_F)-2)$, see \cite[Cor.\ 4]{Xi87}.
\end{proof}

\begin{prop}
Up to multiplication by -2, there is a Hodge isometry between $H^2(F)^{-}$ and $H^4_{pr}(X,\ZZ)$. In particular, $H^2(F)^-$ has a K3-type Hodge structure with $h^{1,1}=22$.
\end{prop}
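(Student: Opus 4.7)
The plan is to apply \cref{prop:cbisometry} verbatim to obtain the Hodge isometry between $H^2(F)^{-}$ and $H^4_{pr}(X,\ZZ)$ (up to multiplication by $-2$); the hypotheses are in place, since $X \to \QQ^3$ is a conic bundle over a smooth threefold, with nodal discriminant $\Delta$ and smooth discriminant double cover $F \to \Delta$ ramified at the singularities of $\Delta$. The substance of the proof has thus already been done in \cref{sect16nodal} and the content of the present statement reduces to pinning down the Hodge numbers of the resulting K3-type structure.

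To that end, I would use the blow-up formula. Since $X = \Bl_{\PP^1 \times \PP^1} X_{10}$, we have the decomposition
\[
H^4(X,\QQ) \;\cong\; H^4(X_{10},\QQ) \oplus H^2(\PP^1 \times \PP^1,\QQ)(-1),
\]
where the second summand is purely of Hodge type $(1,1)$. Recalling that a smooth Gushel--Mukai fourfold has middle Hodge numbers $(h^{4,0}, h^{3,1}, h^{2,2}, h^{1,3}, h^{0,4}) = (0,1,22,1,0)$, this yields $h^{2,2}(X) = 22 + 2 = 24$ while $h^{3,1}(X) = h^{1,3}(X) = 1$ are unchanged.

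Finally, $H^4_{pr}(X,\ZZ)$ is by definition the orthogonal complement of $f^* H^4(\QQ^3,\ZZ) \oplus f^* H^2(\QQ^3,\ZZ) \cdot h$, which contributes two independent algebraic classes (the pulled-back point and the class $f^*h_{\QQ^3} \cdot h$); they are independent since $h$ itself is not pulled back from the base. Subtracting them gives $h^{2,2}_{pr}(X) = 22$, while the $(3,1)$ and $(1,3)$ components survive intact, being automatically orthogonal to algebraic classes. Transporting via the isometry produces the claimed K3-type Hodge structure on $H^2(F)^{-}$ with $h^{2,0} = h^{0,2} = 1$ and $h^{1,1} = 22$.

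The main (modest) point to check is really just the bookkeeping for the GM fourfold's Hodge numbers and the independence of the two pulled-back classes; no genuine obstacle arises, as the heavy lifting lives entirely in \cref{prop:cbisometry}.
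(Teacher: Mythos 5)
Your proof is correct and follows essentially the same route as the paper's: invoke \cref{prop:cbisometry}, compute $h^{2,2}(X)=24$ via the blow-up formula for $X\to X_{10}$ along the $\sigma$-quadric $\PP^1\times\PP^1$ together with $h^{2,2}(X_{10})=22$, and subtract the two algebraic classes spanning the kernel of the projection to $H^4_{pr}(X,\ZZ)$. (One cosmetic slip: the generator of $f^*H^4(\QQ^3,\ZZ)$ is the pullback of the class of a line in $\QQ^3$, not of a point, but this changes nothing.)
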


\begin{proof}
The isometry is given by \cref{prop:cbisometry}. Since $X \to X_{10}$ is a blow-up of a quadric, we have that $h^{2,2}(X)=24$. On the other hand $H^4_{pr}(X,\ZZ)$ is obtained as the orthogonal to two algebraic classes, so that $h^{2,2}_{pr}(X)=22$.
\end{proof}

We have then proved \hyperlink{main_theorem_b}{Main Theorem B} (5) and (6) in this case.

Note that using the same approach as in \cite{hu21} we can also calculate some invariants of $F$. In fact, if we take a minimal resolution of a quartic surface $\tilde{\Delta}\subset \QQ^3$ (which is diffeomorphic to a smooth octic complete intersection in $\PP^4$), it has $e(\tilde{\Delta})=64$ and $K^2_{\tilde{\Delta}}=8$. This implies that the nodal $\Delta$ has $e(\Delta)=44$. Hence, the double cover surface $F$ will have $e(F)=2 \cdot 44-20=68$ and $K^2_F= 2 K^2_\Delta=2K^2_{\tilde{\Delta}}=16$. It follows that $12\chi(\cO_F)= K^2_F+68$, i.e.\ $\chi(\cO_F)=7$.

Note that the line of planes $P$ such that $\ell\subset P\subset V_1\oplus L$ contains 
$P=V_1\oplus \ell$ (if $\ell$ is not $V_1$), and it follows that $F$ can also be described 
as the surface of lines in $X$ that touch the $\sigma$-quadric $Q_0\subset X$. Indeed the latter consists in the planes in $X$ passing through the kernel of the skew-symmetric form that 
cuts-out the hyperplane section of $\Gr(2,5)$ containing $X$, and this kernel is precisely $V_1$. 
So with the notations of \cite[p. 22 of arXiv version]{dk-linear}, $F=p(q^{-1}(Q_0))$.

Now, according to the proofs of \cite[Lemma 5.10 and Corollary 5.11]{dk-linear}, 
$F$ should be the preimage of the intersection $S$ of the EPW-sextic $\tilde{Y}_A$
with a (singular) quadric  $C_{v_0}Q_0\subset \PP(V_5)$. But astonishingly, the 
double cover should split into the union of two surfaces $F=R\cup\tau_A(R)$, where 
$\tau_A$ is the antisymplectic involution of  $\tilde{Y}_A$ whose quotient is $Y_A$. 

\smallskip Recall that, as proved in \cite[Theorem 3.6]{dk-class}, 
there is 
a bijection between the set of smooth ordinary Gushel--Mukai fourfolds $X$, and the set of 
Lagrangian data $(A,V_6,V_5)$. This was suggested in \cite[section 4.5]{im} with the following geometric interpretation: the Hilbert scheme $F_c(X)$ of conics in $X$ is essentially a $\PP^1$-bundle over 
the double EPW-sextic $\tilde{Y}_A$, and $V_5$ corresponds to the Pl\"ucker point on 
$Y_A\subset \PP (V_6^\vee)$. 

On the other hand, the following incidence variety was defined in \cite{kkm}:
$$K_A=\{([U],[\alpha]) \in Z_A\times\PP(\wedge^3V_6), \quad [\alpha] \in \PP(T_U)\cap \PP(A)\cap \Omega\},$$
where $\Omega\subset \PP(\wedge^3V_6)$ is the variety of partially decomposable three-forms and $Z_A$ the EPW cube associated to $A$. 
In particular $\Omega$ contains $\Gr(3,6)$, and the complement has two natural fibrations
over $\PP(V_6)$ and $\PP(V_6^\vee)$. This induces two projections whose images are the 
$EPW$ sextic $Y_A$ and its dual $Y_A^\vee$. It was observed in \cite[Lemma 3.3]{kkm} that the 
fibres of these projections are quartic sections of $\QQ^3$ with an even set of $20$ nodes. 

\begin{prop} The singular surface $\Delta$ is the fibre over the Pl\"ucker point of $X$ 
of the projection of $K_A$ to $Y_A$.
\end{prop}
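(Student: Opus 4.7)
The plan is to identify the $20$-nodal quartic $\Delta \subset \QQ^3$ with the fibre $\pi^{-1}([V_5])$ of the projection $\pi : K_A \to Y_A$ by matching their associated double covers. Throughout I use the splitting $V_5 = V_1 \oplus V_4$, where $V_1$ is the kernel of the skew-symmetric form cutting out the linear section of $\Gr(2,V_5)$ containing $X_{10}$, so that the $\sigma$-quadric $Q_0 \subset X_{10}$ consists of the planes through $V_1$.

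First I would reformulate $F$ via the description already established: it is the surface of lines in $X_{10}$ meeting $Q_0$, so each point of $F$ is a pair $(L,\ell)$ with $L \in \QQ^3 \subset \Gr(2,V_4)$ isotropic for the induced two-form on $V_4$ and $\ell \subset V_1 \oplus L$ a one-dimensional subspace. The covering map $F \to \Delta$ then sends $(L,\ell) \mapsto L$, and the covering involution exchanges $\ell$ with its complement in the pencil of lines of $V_1\oplus L$ through $\ell$.

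Second I would parametrise $\pi^{-1}([V_5])$ using the incidence $K_A \subset Z_A \times \PP(\wedge^3 V_6)$. A point above $[V_5] \in Y_A \subset \PP(V_6^\vee)$ is a pair $([U],[\alpha])$ with $[U] \in Z_A$ and $[\alpha] \in \PP(T_U) \cap \PP(A) \cap \Omega$ such that $\alpha$ lies in the hyperplane of $\wedge^3 V_6$ dual to $V_5$. Partial decomposability lets one write $\alpha = v_6 \wedge \beta$ with $v_6 \notin V_5$ and $\beta \in \wedge^2 V_5$ a Pl\"ucker vector, so $\beta$ defines a two-plane $L' \subset V_5$; the EPW-cube condition on $[U]$ and the Lagrangian data correspondence of \cite[Theorem 3.6]{dk-class} then force $L'$ to be compatible with $V_1$, so that its projection along $V_1$ is an $\omega$-isotropic plane $L \subset V_4$, i.e.\ a point of $\QQ^3$.

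Third I would match the two parametrizations. Both $F \to \Delta$ and the fibre $\pi^{-1}([V_5])$ are $2$-to-$1$ covers of a $20$-nodal quartic in $\QQ^3$, the latter by \cite[Lemma 3.3]{kkm}. The assignment $([U],[\alpha]) \mapsto (L,\ell)$, where $\ell \subset V_1 \oplus L$ records the choice of lift of $L$ to $L' \subset V_5$ (equivalently, encodes $v_6$ and $\beta$ modulo the ambiguities of the decomposition), defines a biregular map of the two double covers compatible with the involutions. Matching the branch loci then yields $\pi^{-1}([V_5]) = \Delta$ inside $\QQ^3$. The main obstacle is making the correspondence $\alpha \mapsto (L,\ell)$ fully explicit and checking its compatibility with involutions and the $Z_A$-condition; this requires unpacking the Lagrangian data of \cite[Theorem 3.6]{dk-class} together with the geometry of $K_A$ from \cite{kkm} in coordinates adapted to $V_6 = V_1 \oplus V_4 \oplus \langle v_6 \rangle$, and then extending from a generic point by flatness.
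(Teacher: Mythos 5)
Your overall strategy --- identifying the fibre of $K_A \to Y_A$ over the Pl\"ucker point $[V_5]$ with $\Delta$ by exhibiting a correspondence with the singular fibres of the conic bundle $X\to\QQ^3$ --- is reasonable in spirit, but the parametrisation of the fibre in your second step is incorrect. A point $[\alpha]$ of $\Omega\setminus\Gr(3,6)$ lying over $[V_5]\in\PP(V_6^\vee)$ satisfies $\alpha\in\wedge^3V_5$; writing $\alpha=v\wedge\omega$, the vertex $v$ is the unique (up to scalar) vector with $v\wedge\alpha=0$ and it lies \emph{inside} $V_5$, while $\omega$ has rank four modulo $v$ --- it cannot be a Pl\"ucker vector, since otherwise $\alpha$ would be totally decomposable and hence excluded from the fibration. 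So $\alpha$ does not single out a two-plane $L'\subset V_5$ as you claim; what it determines is the pair $([v],V_5)$ and the quadric threefold $Q_3=\{[U]\in\Gr(3,V_6):\alpha\in T_U\}$, a hyperplane section of the $\Gr(2,4)$ of $3$-spaces containing $v$ and contained in $V_5$. Moreover the fibre of $K_A\to Y_A$ over $[V_5]$ is $D^2_A\cap Q_3$, which by \cite[Lemma 3.3]{kkm} \emph{is} the $20$-nodal quartic; it is not a double cover of one (the double cover only appears after lifting to the double EPW cube $\tilde{Z}_A$), so your step of ``matching branch loci'' of two double covers is not well posed.

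More importantly, the identification of $Q_3$ with the base $\QQ^3\subset\Gr(2,V_4)$ of the conic bundle and of $D^2_A\cap Q_3$ with the discriminant $\Delta$ is precisely the content of the statement, and your plan leaves it as ``unpacking the Lagrangian data''. The route the paper takes (carried out in the proof of the proposition that follows this one) is structural rather than pointwise: it realizes $D^2_A\cap Q_3$ as the first Lagrangian degeneracy locus of the family $T_U/P_U$ against $(A\cap P_U^{\perp})/P_U$ over $Q_3$, rewrites this as a symmetric degeneracy locus of a map $\mathcal T_P\to\mathcal T_P^{\vee}$ with $\mathcal T_P(1)\simeq\cU^\vee\oplus\cO$, and observes that this is the same symmetric bundle datum $\cU\oplus\cO(-1)\to\cU^{\vee}\oplus\cO(1)$ whose degeneracy locus is $\Delta$ for the GM-21 conic bundle. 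Without either that degeneracy-locus computation or a genuinely explicit dictionary between points $[U]\in Q_3$ and reducible conics of $X\to\QQ^3$ (compatible with the embeddings into $\QQ^3$, not merely an abstract bijection), your argument does not close.
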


Note as a consequence that through the projection of $K_A$ to $Z_A$, 
the surface  $\Delta$ embeds into the EPW-cube $Z_A$, and its double cover $F$ should 
thus embed into the double EPW-cube $\tilde{Z}_A$ (see \cite[Remark 3.4]{kkm}). 
In fact the image of $\Delta$ appears as the intersection of the EPW cube $Z_A$ with a special 3-dimensional quadric contained in the quartic degeneracy locus $D^1_{A,T}$.

\medskip\noindent {\it Remark.} $40$-nodal quartics in $\QQ^3$ 
also appear in \cite{cmr} as images
of Schoen surfaces by their canonical systems.

\subsection{Another conic bundle with quartic discriminant} Let $X'$ in 
$\Gr(2,4)_1 \times \Gr(2,4)_2$ be the zero locus $X$ of a general section of $\cO(0,1) \oplus \cU^{\vee}_{\Gr(2,4)_2}(1,0) \oplus \cO(1,1)$. As described in \cite{bfmt}, case K3-35:
\begin{itemize}[leftmargin=*]
    \item the first projection $\pi_1 \colon X' \to \Gr(2,4)_1$ is birational, with base locus a genus $6$ K3 surface with two double points,
    \item $\pi_2 \colon X' \to \Gr(2,4)_2$ is a conic fibration over $\QQ^3$, with discriminant a quartic hypersurface with $20$ nodes.
\end{itemize}

The above description allows us to calculate the invariants of $F'$, that can be defined as the zero locus of a general section of some vector bundle, and show that they indeed coincide with the ones of $F$. We omit these calculations, since we will prove that $F'$ is isomorphic to some $F$ as in the the GM-20 case. Indeed, the upshot is that any such $X' \to \PP^3$ is related to a special case of an $X \to \PP^3$ of type GM-20. Here there is no evidence from the moduli of nodal sextic, that the discriminant double cover $F' \to \Delta'$ must be isomorphic to some $F \to \Delta$. We will obtain this isomorphism by hyperbolic reduction.

\medskip

This is the type of surfaces
that appears also in \cite{kkm} in relation with EPW sextics.
The fact that one can associate a special EPW sextic to a K3 surface of genus $6$ is 
discussed in \cite[\textsection 4.3]{og-inv}. It was actually observed by Mukai at the very beginning of the story.

Let us try to count parameters. The fourfold $X$ is defined by three tensor $\alpha\in \wedge^2B^\vee$, $\beta\in \wedge^2A^\vee\otimes\wedge^2B^\vee$, $\gamma\in \wedge^2A^\vee\otimes B^\vee$. Multiplying these tensors by nonzero scalars does not
make any difference, nor changing $\beta$ by some tensor constructed from $\alpha$ and $\gamma$, that is by some $\delta\beta=\Omega\otimes \alpha+\gamma\wedge\phi$ for some 
$\Omega\in \wedge^2A^\vee$ and $\phi\in B^\vee$. For $(\alpha,\gamma)$ general the induced map 
from $\wedge^2A^\vee\oplus B^\vee$ to $\wedge^2A^\vee\otimes\wedge^2B^\vee$ is injective, 
and its cokernel  therefore has dimension $36-6-4=26$. Modulo the action of $SL(A)$
and $SL(B)$ we would thus expect $(6-1)+(24-1)+(26-1)-(15+15)=23$ parameters. 
This is a little strange if we imagine that the surface from \cite{kkm} should
be a deformation of ours (corresponding to a deformation from a special EPW sextic
to a general one). 

On the other hand, the linear system of quartic sections of a quadric has dimension 
$70-15-1=54$, which reduces to $44$ modulo the action of $SO_5$. Imposing $20$ ODP 
should reduce the number of parameters to $24$.


\subsection{Comparing the conic bundles}

\subsubsection{The  conic bundle structure on GM-21}
Recall that GM-21 is the family of Fano fourfolds $X$ defined as zero loci of general sections of 
\begin{equation}\tag{GM-21}
\cO(0,2) \oplus (\cQ_{2,V_4} \boxtimes \cU^\vee_{2,V_5}) \oplus \cO(1,0) \xrightarrow{} \Gr(2,V_4) \times \Gr(2,V_5).\end{equation}
The section of $\cQ_{2,V_4} \boxtimes \cU^\vee_{2,V_5}$ is defined by a general morphism 
from $V_5$ to $V_4$,
that we can identify with the projection to $V_4$ with respect to a decomposition  $V_5=V_4\oplus \CC K$.
The zero locus of this section is then the set of pairs $(U_2,U'_2)$ such that $U'_2\subset U_2
\oplus\CC K$. When $U_2$ is fixed, we get inside $\Gr(2,V_5)$ a projective plane  $\Gr(2,U_2
\oplus\CC K)\simeq\PP^2$, in which the factor  $\cO(0,2)$ cuts-out a conic bundle. 
The factor  $\cO(1,0)$ imposes that $U_2$ be isotropic with respect to some 
symplectic form on $V_4$, or equivalently that $[U_2]$ belongs to some smooth hyperplane section 
of $\Gr(2,V_4)$, which is a copy of $\QQ^3$. So the conic bundle structure is defined by a map
$q: \cO_{\QQ^3} \longrightarrow \mathrm{Sym}^2\cE^\vee,$
with 
$$\cE=\wedge^2(\cU\oplus\cO_{\QQ^3})\simeq \cO_{\QQ^3}(-1)\oplus\cU.$$
Indeed, in this case the system $\cO_{\mathrm{P}(\cE^{\vee})}(1)$ gives the second projection to a hyperplane section $R$ of $\Gr(2,5)$ embedded by the Pl\"ucker embedding in $\PP^8$.
It remains to observe that any conic bundle given by a map
$q: \cO_{\QQ^3} \longrightarrow \mathrm{Sym}^2\cE^\vee,$ corresponds to a section on  $\cO_{\mathrm{P}(\cE^{\vee})}(2)$, which itself corresponds to a quadric section of the Grassmannian. To see the latter we make a dimension count: 
$$H^0(\mathrm{Sym}^2\cE^\vee)=40=H^0(\cO_{R}(2)).$$
Summing up the manifolds of type GM-21 are exactly the conic bundles associated to  maps $\cO_{\QQ^3} \longrightarrow \mathrm{Sym}^2\cE^\vee$.

\subsubsection{The  conic bundle structure on K3-35}
Let us compare with  the description of K3-35 as the family of Fano fourfolds $X'$ obtained as zero loci of general sections of 
\begin{equation}\tag{K3-35}
\cO(1,1) \oplus  \cU^\vee(0,1) \oplus \cO(1,0) \xrightarrow{} \Gr(2,U_4) \times \Gr(2,V_4),
\end{equation}
where $\cU$ is the tautological rank two bundle pulled-back from  $\Gr(2,U_4) $. 
Here again the  $\cO(1,0)$ factor cuts out a quadric $\QQ^3$ in $\Gr(2,U_4)$ which is the base of the conic bundle. 
Over $U_2\subset U_4$, the factors  $\cO(1,1)$ and $\cU^\vee(0,1)$ cut-out a three-dimensional subspace 
of $\wedge^2V_4$, on which we can restrict the Pl\"ucker quadric. 
So the conic bundle structure is defined by a map $\cO_{\QQ^3}\longrightarrow \mathrm{Sym}^2\cF^\vee,$ where the rank-three vector bundle $\cF$ is defined by an exact sequence 
\begin{equation}
0 \longrightarrow \cU \oplus \cO_{\QQ^3}(-1) \longrightarrow \wedge^2 V_4^\vee \otimes \cO_{\QQ^3} \longrightarrow \cF^\vee \longrightarrow 0.
\end{equation}
Let us give another presentation of the bundle $\cF$.
Write $\wedge^2 V_4 = V_6$, and consider the diagram
$$
\xymatrix{
 & \cU \ar[d] & \cQ^\vee \ar[d] & & \\
 0 \ar[r] & \cU \oplus \cO(-1) \ar[d]\ar[r] & V_6 \otimes \cO \ar[d]\ar[r] & \cF^\vee \ar[r]& 0 \\
 0 \ar[r] & \cO(-1) \ar[r] & \cU^\vee \oplus \cO^{\oplus 2} \ar[r] & \cE'^\vee \ar[r] & 0 } 
$$
where the middle column is the tautological sequence with an extra trivial rank two bundle in the cokernel (and both columns are exact). Since $\cU \simeq \cQ^\vee$ in restriction to $\QQ^3$,  we deduce
that $\cE' = \cF$.

\subsubsection{How to connect them through hyperbolic reduction}
Consider the rank five vector bundle 
$$\cG := \cU \oplus \cO^{\oplus 2} \oplus \cO(-1),$$
and a quadratic form $Q:\cO_{\QQ^3}\ra \mathrm{Sym}^2\cG^\vee$ on it. Let $Y \to \QQ^3$ be the associated quadric bundle. 
Restricting to $\cO^{\oplus 2}$ we get a constant quadratic form
in two dimensions, so there always exist a trivial isotropic subbundle $\cN$ of $\cG$. 
By hyperbolic reduction, we get a conic bundle $X$ of type GM-21. 

If we moreover assume that $\cN'=\cO(-1)$ is isotropic, we can perform another hyperbolic reduction.
The exact sequence \cref{eq:useful-for-hyp-red} shows that the resulting conic bundle will be of type  K3-35. Applying
\cref{section}, we get:

\begin{prop} 
If $X'$ is a general fourfold of type K3-35 then there exists a fourfold $X$ of type GM-21
such that its associated Gushel--Mukai fourfold contains a $\tau$-quadric and a quadric threefold bundle $Y\ra \QQ^3$ such that both $X$ and $X'$ are obtained by hyperbolic reduction from $Y$.  

Conversely, if $X$ is a fourfold of type GM-21 corresponding to a general Gushel--Mukai fourfold containing a $\tau$-quadric, then there exists $X'$ a fourfold of type K3-35 and $Y\ra \QQ^3$ such that both $X$ and $X'$ are obtained by hyperbolic reduction from $Y$.  
\end{prop}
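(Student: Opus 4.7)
The plan is to follow the template already developed for the pairs C-4/R-62 and GM-20/K3-31 by analyzing the loci of quadric bundle structures $Q \in H^0(\mathrm{Sym}^2 \cG^\vee)$ on the rank-five bundle $\cG = \cU \oplus \cO^{\oplus 2} \oplus \cO(-1)$ over $\QQ^3$ admitting the two relevant kinds of isotropic subbundle. I would introduce $\mathcal S_1$ as the locus of pairs $(Q,\cN)$ with $\cN \hookrightarrow \cO^{\oplus 2} \subset \cG$ a trivial isotropic line, and $\mathcal S_2$ as the locus of pairs $(Q, \cN')$ with $\cN' \simeq \cO(-1) \hookrightarrow \cG$ isotropic. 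Since $Q|_{\cO^{\oplus 2}}$ is a constant symmetric form on a $2$-dimensional trivial bundle, an isotropic direction always exists, so that the projection from $\mathcal S_1$ to $H^0(\mathrm{Sym}^2 \cG^\vee)$ is dominant. The vanishing of $Q|_{\cN'}$ gives a section of $\cO_{\QQ^3}(2)$, a non-trivial constraint whose codimension, balanced against the freedom of choosing the embedding of $\cN'$, should produce a locus $\mathcal S_2$ of expected dimension matching that of the family K3-35; the intersection $\mathcal S_3 = \mathcal S_1 \cap \mathcal S_2$ will then parameterize the sixfolds $Y \to \QQ^3$ admitting both isotropic subbundles simultaneously.

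Every $Y$ corresponding to a point of $\mathcal S_3$ then admits compatible hyperbolic reductions: through $\cN$ to a conic bundle of type GM-21, and through $\cN'$ to one of type K3-35, using the comparison of exact sequences carried out just above to identify $\cE' \simeq \cN'^\perp/\cN'$ with the rank-three bundle realising K3-35. By \cref{section}, the resulting GM-21 fourfold $X$ admits a section over the divisor $D \subset \QQ^3$ cut out by $q(\cN, \cN') = 0$, which is a hyperplane section of $\QQ^3$, hence a smooth quadric surface $Q_0 \simeq \PP^1 \times \PP^1$. To identify the surface this section sweeps in the ambient Gushel--Mukai fourfold $X_{10}$, I would unwind the bundle description $\cE = \cO_{\QQ^3}(-1) \oplus \cU$: at each $[U_2] \in Q_0$ the section produces a plane $[U'_2] \in \Gr(2, U_2 \oplus \CC K)$ whose defining data, read off from $\cN'$, encodes an incidence with a fixed flag $v \in V_4 \subset V_5$. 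Such flag-incidence surfaces in $\Gr(2,V_5)$ are precisely the $\tau$-quadrics in the sense of \cite{dim}.

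The converse would then follow by an irreducibility and dimension comparison: both the family of K3-35 fourfolds and the family of GM-21 fourfolds whose Gushel--Mukai fourfold contains a $\tau$-quadric are irreducible, and the image of $\mathcal S_3$ under the two hyperbolic-reduction constructions, modulo the gauge action of automorphisms of $\cG$ preserving the relevant filtrations, will dominate each of them once the expected dimensions match. The main obstacle is the geometric identification of the section as a $\tau$-quadric: unlike the $\sigma$-plane case from the GM-20/K3-31 setting, where linearity in $\Gr(2, V_5)$ and pivot considerations make the recognition essentially immediate, $\tau$-quadrics require a careful unwinding of the explicit Pl\"ucker incidence dictated by the structure bundle $\cG$ on $\QQ^3$. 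A secondary technical issue is to verify cohomologically that $\mathcal S_3$ is not confined to a proper sub-locus of the expected target after taking the quotient by the gauge action; this will require a dimension count parallel to the one carried out for the pair GM-20/K3-31.
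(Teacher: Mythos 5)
Your overall strategy is the right one -- a common quadric threefold bundle $Y\subset\PP(\cG)$ with $\cG=\cU\oplus\cO^{\oplus 2}\oplus\cO(-1)$, two isotropic line subbundles, and the section lemma to detect the special surface in the Gushel--Mukai fourfold -- but the proposal leaves unproved exactly the two points that carry the content of the statement, and the paper handles both quite differently.

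For the direction K3-35 $\Rightarrow$ GM-21 you do not need the $\mathcal S_1,\mathcal S_2,\mathcal S_3$ machinery at all: by \cite[Proposition 3.6]{BKK} the conic bundle of a general $X'$ of type K3-35 is a hyperbolic reduction of some $Q:\cO_{\QQ^3}\to\mathrm{Sym}^2\cG^\vee$ through $\cO(-1)\subset\cG$, and such a $Q$ \emph{automatically} admits a trivial isotropic subbundle, since its restriction to $\cO^{\oplus 2}$ is a constant binary quadratic form. So the GM-21 partner exists unconditionally, with no dimension count; this is unlike the GM-20/K3-31 case you are modelling your argument on, where the trivial isotropic subbundle only exists in codimension one. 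Your framework is not wrong here, merely heavier than necessary.

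The genuine gap is in the converse, and in the identification of the section over $D$ as a $\tau$-quadric, which you flag as ``the main obstacle'' but do not resolve. The key fact, which your proposal never isolates, is the following: writing the GM-21 quadratic form in block form
\[
\begin{pmatrix} \phi & a \\ a & q \end{pmatrix},\qquad
\phi\in H^0(\mathrm{Sym}^2\cU^\vee),\ a\in H^0(\cU^\vee(1)),\ q\in H^0(\cO_{\QQ^3}(2)),
\]
the section $q$ cuts out $G\cap\Gr(2,V_4)$ on $\QQ^3$, and $G$ contains a $\tau$-quadric if and only if $q$ factors as a product $l_1l_2$ of two linear forms. This equivalence is what links the geometry of the $\tau$-quadric to the algebra of the isotropic $\cO(-1)$: given the factorization, the paper writes down an explicit $5\times 5$ extension
\[
\begin{pmatrix}
\phi & 0 & 0 & a\\ 0&0&1&l_1\\ 0&1&0&l_2\\ a&l_1&l_2&0
\end{pmatrix}
\]
which visibly has two trivial isotropic sections and one isotropic $\cO(-1)$, and reduces to $X$ and to a K3-35 fourfold respectively. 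Your proposed substitute -- irreducibility of the two families plus a dimension comparison on $\mathcal S_3$ modulo the gauge group -- is not carried out, and to know that the image of $\mathcal S_3$ dominates the divisor $\mathcal D'_{10}$ of $\tau$-quadric-containing fourfolds (rather than some other codimension-one family) you would in any case have to prove the same equivalence between the section over $D$ and a $\tau$-quadric. Until that identification is established, neither the forward claim about which special Gushel--Mukai fourfolds arise, nor the converse, is proved.
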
 
\begin{proof}
    If $X'$ is a general fourfold of type K3-35 then $X'$ has a conic bundle structure $\QQ^3$ corresponding to a map:
    $q_{X'}:\cO_{\QQ^3} \longrightarrow \mathrm{Sym}^2\cE'^\vee$ with 
    $\cE'$ defined by:
    $$0\longrightarrow \cO(-1)\longrightarrow \cU^{\vee}\oplus \cO^{\oplus 2}\longrightarrow \cE'^{\vee}.$$
    By \cite[Proposition 3.6]{BKK} we deduce that $X'$ appears as a conic reduction of a quadric threefold bundle $Y$ associated to a map 
    $q_{Y}:\cO_{\QQ^3} \longrightarrow \mathrm{Sym}^2\cG^{\vee}$ with 
    $$\cG=\cU\oplus \cO^{\oplus 2}\oplus \cO(-1).$$ This conic bundle $q_Y$ always admits an isotropic section $\cO\to \cG$, and a reduction with respect to this section gives rise to a fourfold $X$ of type GM-21 which is a conic bundle associated to a map 
    $q_{X}:\cO_{\QQ^3} \longrightarrow \mathrm{Sym}^2\cE^{\vee}$, where 
    $\cE=\cO(-1) \oplus \cU$. Moreover, since the quadric threefold bundle $Y$ admits two isotropic sections $\cO\to \cG$ and $\cO(-1)\to \cG$, by \cref{section} its reduction $X'$ admits a section over the divisor $D\subset \QQ^3$ where the two sections are orthogonal with respect to $q_Y$. This divisor $D$ is  just a hyperplane section of $\QQ^3$ and is mapped by the section to a $\tau$-quadric that is disjoint from the $\sigma$-quadric in the Gushel--Mukai fourfold. Note that in fact, we always have two isotropic sections $\cO\to \cG$,  giving rise to two such $\tau$-quadrics.
    
    Conversely, let $G$ be a Gushel--Mukai fourfold containing a $\tau$-quadric surface $T$. The surface $T$ is associated to a fourspace $V_4$ such that $T\subset \Gr(2,V_4)$. Furthermore there is another $\tau$-quadric surface $T'$ obtained as the residual component to $T$ in the intersection $G\cap \Gr(2,V_4)$. Now, as described above, the corresponding fourfold $X$ of type GM-21 is described as the blow up of $G$ in its $\sigma$-quadric $S$. The latter defines a unique one-dimensional subspace $V_1\subset V_4$ such that $S\subset \Gr(V_1,2,V_5)$. We thus have a natural decomposition $V_5= V_4\oplus V_1$ and a fourfold $X$ with the corresponding conic bundle structure. Let $q_X: \cO\longrightarrow \mathrm{Sym}^2 (\cO(1) \oplus \cU^{\vee})$ be map associated to the conic bundle $X$. Let us write it as a block matrix:
   \[\left(
\begin{array}{cc}
    \phi & a \\
    a & q
\end{array}\right),
\]
with $\phi\in H^0(\mathrm{Sym}^2\cU^{\vee}) $, $a\in H^0(\cU^{\vee}(1)) $ and $q\in H^0(\cO(2))$.

    Observe that in that description $\QQ_3$  is just the hyperplane section of $\Gr(2,V_4)$ by the hyperplane spanned by the Gushel--Mukai fourfold $G$. Moreover, under this identification the component $q$ of $q_X$ corresponding to $\cO_{\QQ^3}(2) $ defines the quadric section of $\QQ^3$ corresponding to $G\cap \Gr(2,V_4)$. By assumption, the latter decomposes into the product $q=l_1l_2$ of two sections $l_1$, $l_2$ of $\cO_{\QQ^3}(1)$. We can now consider the map $q_G:\cO\longrightarrow \mathrm{Sym}^2 \cG^{\vee}$ given by the following block matrix:
         \[\left(
\begin{array}{cccc}
    \phi & 0& 0&a \\
    0 & 0& 1& l_1\\
    0&1 & 0 & l_2\\
    a & l_1&l_2& 0
\end{array}\right),
\]
and observe that the corresponding quadric threefold bundle has two isotropic sections $\cO\to \cG$ and one isotropic section $\cO(-1)\to \cG$. Furthermore, taking the reduction with respect to one of the two sections  $\cO\to \cG$ leads to $X$, whereas taking the reduction with respect to the isotropic section $\cO(-1)\to \cG$ leads to a conic bundle structure on a fourfold $X'$ of type K3-31.
\end{proof}

Gushel--Mukai fourfolds containing a $\tau$-quadric 
are parametrised by a divisor $\mathcal{D}'_{10}$ in the period domain \cite[7.3]{dim}.

\begin{prop}
The $20$-nodal quartic section of $\QQ_3$ in the case GM-21 is a  section of a general EPW cube contained in $\Gr(3,6)$ with a special $\Gr(2,4)\subset  \Gr(3,6)$.
In the special cases the corresponding double EPW cube is associated to a Lagrangian space $A$ such that $A^{\perp}$ is contained in $\Delta$ (notation from \cite{dk-linear}). 
\end{prop}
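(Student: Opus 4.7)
The strategy is to combine the preceding proposition with the geometry of the incidence variety $K_A$ from \cite{kkm}. That proposition identifies $\Delta$ as the fibre of the projection $K_A \to Y_A$ over the Pl\"ucker point $[V_5]$, so the second projection $K_A \to Z_A$ automatically embeds $\Delta$ into the EPW cube $Z_A \subset \Gr(3,V_6)$. What remains is to identify this embedding as a linear section of $Z_A$ cut by an explicit $\Gr(2,4) \subset \Gr(3,V_6)$.

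To exhibit such a $\Gr(2,4)$, I would use the two pieces of data attached to the Gushel--Mukai fourfold $X_{10}$ together with its blow-up centre: on the one hand the hyperplane $V_5 \subset V_6$ (the Pl\"ucker point), on the other the line $V_1 \subset V_5$ defined as the kernel of the skew form cutting out the hyperplane section of $\Gr(2,V_5)$ containing $X_{10}$ — this is precisely the line whose pivot is the $\sigma$-quadric $Q_0 \subset X_{10}$ being blown up. Writing $V_5 = V_1 \oplus V_4$, the assignment $U_2 \mapsto V_1 \oplus U_2$ gives a closed embedding $\Gr(2,V_4) \hookrightarrow \Gr(3,V_6)$ whose image is the $4$-dimensional Schubert variety of $3$-spaces $U_3$ with $V_1 \subset U_3 \subset V_5$. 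This is the ``special'' $\Gr(2,4)$ of the statement. Under the $K_A$-correspondence, the Schubert condition $V_1 \subset U_3$ translates into the incidence of lines with $Q_0$ (equivalently, into the $\sigma$-quadric condition on the pivot map), while the condition $U_3 \subset V_5$ matches the fibre-over-$[V_5]$ condition. Thus the image of $\Delta$ in $Z_A$ coincides with $Z_A \cap \Gr(2,V_4)$, giving the first assertion; the EPW cube $Z_A$ itself is general because $A$ is the Lagrangian of a general GM fourfold.

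For the second claim, in the K3-35 case the associated Gushel--Mukai fourfold $X_{10}$ contains a $\tau$-quadric. Using the dictionary between GM fourfolds and Lagrangian data $(A,V_6,V_5)$ from \cite[Thm.~3.6]{dk-class} and the explicit parameterisation of linear subvarieties in $X_{10}$ from \cite{dk-linear}, the presence of a $\tau$-quadric is equivalent to an incidence condition between $A$ (or rather its annihilator $A^\perp \subset \wedge^3 V_6^\vee$) and a distinguished hyperplane; unwinding the conventions of \cite{dk-linear}, this is precisely the statement that $A^\perp$ lies in the divisor $\Delta$ of the Lagrangian Grassmannian. I would verify this by dualising the Pl\"ucker-incidence description of the $\tau$-quadric to a statement about decomposable three-forms in $A^\perp$, and then compare with the defining incidence of the divisor $\Delta$.

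The main obstacle is the second part: faithfully translating the geometric $\tau$-quadric condition on $X_{10}$ into the correct condition on the dual Lagrangian $A^\perp$, since both the EPW-cube and EPW-sextic sides of the picture come with their own natural dualities that have to be aligned. The first claim, by contrast, is essentially the combination of the preceding proposition with a direct identification of two explicit Schubert incidence conditions.
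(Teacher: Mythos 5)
Your strategy is genuinely different from the paper's. The paper does \emph{not} prove this proposition through the incidence variety $K_A$ (that route is only sketched in the remark preceding the statement); instead it directly re-expresses the $(2,4)$-surface $S_{p,A}=Z_A\cap Q_3$ inside the EPW cube as a symmetric degeneracy locus of a map $\U\oplus\cO(-1)\to\U^\vee\oplus\cO(1)$ over $\QQ^3\subset\Gr(2,4)$, i.e.\ as exactly the same kind of conic-bundle discriminant datum that produces the GM-21 surface. All the work goes into the Lagrangian reduction: fixing $p=[v\wedge\alpha]\in\PP(A)\cap F_{[v]}$, passing from $T_{[U]}$ to $T_{[U]}/P_{[U]}$ with $P_{[U]}=v\wedge U\wedge V_6$, identifying $A_{P,U}$ as the graph of a symmetric map $\mathcal T_P\to\mathcal T_P^\vee$, and computing $\mathcal T_P(1)\simeq\U^\vee\oplus\cO$. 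Your route could in principle work, but as written it has two real gaps.

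First, the claim that the image of $\Delta$ in $Z_A$ ``coincides with $Z_A\cap\Gr(2,V_4)$'' is not correct as stated. The condition singling out the relevant locus inside the Schubert variety $\{U: v\in U\subset H\}\simeq\Gr(2,4)$ is $p\in\PP(T_{[U]})$, equivalently $\wedge^2U\wedge v\wedge\alpha=0$, which is \emph{quadratic} in $U$: it cuts out a quadric threefold $Q_3$, a hyperplane section of the $\Gr(2,4)$, and the surface is $Z_A\cap Q_3$, not $Z_A\cap\Gr(2,4)$ (the latter in general also contains a residual curve where the Lagrangian intersection is honestly of codimension three). Moreover $Z_A=D^2_A$ has codimension $3$ in $\Gr(3,6)$, so $Z_A\cap Q_3$ has expected dimension $0$; the fact that it is a surface is an excess-intersection phenomenon, forced by $\langle p\rangle\subset A\cap T_{[U]}$ along all of $Q_3$, and justifying this is precisely the content of the paper's degeneracy-locus computation. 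Your two ``translations'' (Schubert condition $\leftrightarrow$ incidence with the $\sigma$-quadric $Q_0$; fibre over the Plücker point $\leftrightarrow$ $U_3\subset V_5$) are asserted, not proved, and they are where the substance lies --- in particular you never verify that the point $[v]$ and the hyperplane $H$ attached to $p$ are the kernel $V_1$ of the skew form and the Plücker hyperplane $V_5$. Second, for the claim $A^\perp\in\Delta$ you only outline a duality computation you ``would'' do; the paper disposes of this by citing \cite[Remark 5.29]{dk-linear}, which is legitimate, but your proposal neither carries out the computation nor reduces it to a citable statement. Note also that your first part leans on the preceding proposition, which the paper states without proof, so nothing is gained in rigour by routing through it.
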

\begin{proof}
  The $20$-nodal surface is  constructed as follows. Let $\U'$ be the tautological bundle on the Grassmannian $\Gr(2,4)$ that restricts to the sheaf $\U$ on the hyperplane section $\QQ_3$.
Let us consider the symmetric degeneracy locus constructed from the bundle $$0\to \U\oplus \cO(-1)\to \U^{\vee}\oplus \cO(1)\to \eta \to 0$$
where $\eta$ is a symmetric sheaf supported on a $20$-nodal surface cut out by a quartic in $\QQ_3\subset \PP^4$.

    We claim that the surface $(2,4)$ constructed in an EPW cube can be constructed as a symmetric degeneracy locus as before.

Let $p=[v\wedge \alpha]$ be a point in $\Omega$ and $A\subset \bigwedge^3 V_6$ be a Lagrangian subspace such that $\mathbb{P}(A)\cap F_{[v]}=p$. Recall that the $(2,4)$ surface $S_{p,A}$ in the EPW cube $Y_A$ corresponding to $p$ is defined as follows.
\begin{itemize}[leftmargin=*]
\item First let $Q_3=\{[U]\in \Gr(3,6) : \mathbb{P}(T_{[U]})\ni p\}$. Note that this is indeed a three-dimensional quadric, since it is a hyperplane section of the Grassmannian $\Gr([v],3,[v\wedge \alpha\wedge \alpha])\simeq \Gr(2,4)$ parametrizing three-planes containing the line $[v]$ and contained in the hyperplane $[v\wedge \alpha\wedge \alpha]$; the hyperplane section is  defined by the condition $\wedge^2 U\wedge v\wedge \alpha=0$.
\item Then $S_{p,A}=D^2_A\cap Q_3$ can be described as the first Lagrangian degeneracy locus $D^1_{\bar{\mathcal T}, \bar A}$ of the family $\bar{\mathcal T}$ of Lagrangians  $T_{[U]}/\langle p\rangle \,\subset\, \langle p\rangle ^{\perp}/\langle p\rangle $ for $[U]\in Q_3$ with respect to $\bar A=A/\langle p\rangle \,\subset\, \langle p\rangle ^{\perp}/\langle p\rangle $.
\end{itemize}
Consider $P_{[U]}:=v\wedge U\wedge V_6$ for $[U]\in Q_3$. Clearly we have $P_{[U]}\subset   T_{[U]}$ and $\dim P_{[U]}=7$. In fact, $\mathbb{P}(P_{[U]})=\mathbb P(T_{[U]}\cap F_v)$ is the projective tangent space to the Grassmannian $\Gr([v], 3, V_6)$ in $[U]$ whereas $\mathbb P(T_{{U}})$ is the projective tangent to $\Gr(3,V_6)$ in $[U]$. Let us furthermore observe that for $[U]\in Q_3$ we have $p\in \mathbb P(P_{[U]})$. Indeed, the condition $\wedge^2 U\wedge v\wedge \alpha=0$ implies that $v\wedge \alpha =v\wedge u_1\wedge v_1+v\wedge u_2 \wedge v_2$ for some $u_1,u_2\in U$ and $v_1,v_2 \in V_6$. Hence, $p=[v\wedge \alpha] \in P_{[U]}$. Finally, observe that under our assumptions $A\cap P_U=\langle p\rangle $ since $P_U\subset F_v$, and $\langle p\rangle \subset P_U$, and $A\cap F_v=\langle p\rangle $. 

Now for $[U]\in Q_3$ let $A_{P,U}=(A\cap P_U^{\perp})/P_U \subset P_U^{\perp}/P_U$ and similarly $T_{P,U}=(T_{[U]} \cap P_U^{\perp})/P_U= T_{[U]}/P_U$. Then both $A_{P,U}$ and $T_{P,U}$ are Lagrangian subspaces of the space  $P_U \subset P_U^{\perp}$. Taking the above into account we deduce that $S_{p,A}$ is the first degeneracy locus of the relative Lagrangian degeneracy locus between the Lagrangian subbundles $\mathcal A_P$ and $\mathcal T_P$ of the symplectic  bundle $\mathcal P^{\perp}/\mathcal P$ swept by $A_{P,U}$ and $T_{P,U}$ respectively. If we now consider additionally the subbundle $\mathcal F_{v,P}$ swept by $F_{v,P,U}=F_v/P_U$ for $[U]\in Q_3$ then we see that $F_{v,P,U}\cap A_{P,U}=0$ as well as $F_{v,P,U}\cap T_{p,U}=0$. By the symplectic form we can now identify 
$\mathcal F_P$ with $\mathcal T_P^{\vee}$ and then $A_{P,U}$ is the graph of a symmetric map $Q_{A,P}:\mathcal T_P\to \mathcal T_P^{\vee}$. In this way $S_{A,p}$ is seen as a symmetric degeneracy locus of the conic bundle given by
$A_{p,U}$ on $\mathbb{P}(T_P)$. It remains to observe that $\mathcal T_P(1)\simeq \mathcal U^{\vee}\oplus \mathcal O$. Indeed $\mathcal T_P(1)=\mathcal T_{\Gr(3,6)}/\mathcal T_{\Gr(1,3,6)}|_{Q_3}=T_{\Gr(3,6)}/T_{\Gr(1,3,6)}|_{Q_3}=((\mathcal O\oplus\mathcal U)\otimes \mathcal Q)/(\mathcal U\otimes \mathcal Q)=\mathcal Q=\mathcal U^{\vee}\oplus \mathcal O$.

The Lagrangian space is described in \cite[Remark 5.29]{dk-linear}.
\end{proof}

\section{72-nodal surfaces and Verra fourfolds}\label{Verra}

Denote by $\FF l_3$ the flag manifold, obtained as a smooth hyperplane section of $\PP^2\times \PP^2$.
In this section we discuss a family of $72$ nodal surfaces contained in $\FF l_3$,  whose description is similar to the surfaces studied in this paper. 

Recall that EPW quartics are special quartic sections of the cone over the Segre embedding of $\PP^2\times \PP^2$ in $\PP^8$, and are quotients of a \HK{} fourfold. The codimension $2$ linear section is a $72$ nodal surface contained in $\FF l_3$.
We show that this surface is a degeneracy locus of a quadric bundle on $\FF l_3$ that is birational to a complete intersection of a quadric and a determinantal cubic in $\PP^8$. 

\begin{prop}
Let us consider the rank four bundle $T$ on $\PP^2\times \PP^2$ defined as  $$T= \Omega_{\PP^2}^1(2)\boxtimes \Omega_{\PP^2}^1(2).$$ 
A hyperplane section of an EPW quartic section can be described 
as a symmetric degeneracy locus on $\PP^2\times \PP^2$ of a symmetric map from $T^{\vee}$ to $T$.
\end{prop}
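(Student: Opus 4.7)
The plan is to realize the hyperplane section of an EPW quartic as the degeneracy locus of a symmetric map $\phi\colon T^{\vee}\to T$ built from the Lagrangian data defining the EPW quartic, using the identification of $T$ with (a twist of) the normal bundle of the Segre variety $\PP^2\times\PP^2\subset\PP^8$.

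First I would check the numerical match. Since $\det\Omega^1_{\PP^2}(2)=\cO_{\PP^2}(1)$, we have $\det T=\cO(2,2)$, hence the determinant of a symmetric map $\phi\colon T^{\vee}\to T$ is a section of $(\det T)^{\otimes 2}=\cO(4,4)$, and its degeneracy locus is a bidegree $(4,4)$ divisor on $\PP^2\times\PP^2$. This is precisely the class of the intersection of a quartic hypersurface in $\PP^9$ with the Segre $\PP^2\times\PP^2$, i.e.\ of a general hyperplane section (away from the vertex) of a quartic section of the cone $C$.

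Next I would identify $T$ geometrically. Using the isomorphism $\Omega^1_{\PP^2}\cong T_{\PP^2}(-3)$ valid on $\PP^2$, one has $\Omega^1_{\PP^2}(2)\cong T_{\PP^2}(-1)$, whose fiber at $[v_i]$ is canonically $V_i/v_i$ up to a trivial one-dimensional twist. Consequently the fiber of $T$ at $([v_1],[v_2])$ is $(V_1/v_1)\otimes(V_2/v_2)$, which is exactly the normal space at $[v_1\otimes v_2]$ to the Segre embedding $\PP^2\times\PP^2\hookrightarrow\PP(V_1\otimes V_2)$. Hence $T$ is, up to a trivial twist, the normal bundle of the Segre variety in $\PP^8$.

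Third, I would produce the symmetric form $\phi$ from the EPW-quartic data. An EPW quartic on the cone $C\subset\PP^9$ is defined by a Lagrangian subspace in an appropriate symplectic vector space, in the spirit of O'Grady's EPW constructions. Fixing a general hyperplane $H\subset\PP^9$ so that $H\cap C=\PP^2\times\PP^2$, this Lagrangian datum restricts at each point of the Segre variety to a quadratic form on its normal space in $\PP^8$. Globalizing over $\PP^2\times\PP^2$ and using the previous step, we obtain a section of $S^2T^{\vee}$, that is, a symmetric map $\phi\colon T^{\vee}\to T$. The construction is parallel to the ones employed in the GM-20, GM-21, K3-31 and K3-35 cases earlier in the paper.

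Finally I would compare the two divisors: a point $([v_1],[v_2])$ lies on the hyperplane section $H\cap Q\cap C$ of the EPW quartic exactly when the fiberwise quadratic form on the normal space degenerates, i.e.\ when $\det\phi([v_1],[v_2])=0$; since both loci are effective divisors of the same class $(4,4)$, this pointwise equality upgrades to an equality of divisors. The main obstacle is the third step: extracting an explicit symmetric form from the Lagrangian data of the EPW quartic requires a careful analysis of its (less standard) construction on the cone $C$, after which steps one, two and four are essentially formal.
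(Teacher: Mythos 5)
Your overall strategy (identify $T$ with a twist of a normal bundle, then extract a fibrewise quadratic form from the Lagrangian data) points in the right direction, and your step two is essentially correct: the paper also arrives at $T=\cU_{G_1}^\vee\otimes\cQ_{G_2}$ as the quotient $T_G/T_C$ of tangent bundles, i.e.\ as a normal bundle restricted to the hyperplane section (note, though, that the twist relating $T$ to the normal bundle is $\cO(-1,-1)$, not a trivial one, and this bookkeeping is precisely what makes the final map go from $T^\vee$ to $T$ rather than to some other twist; the paper needs the rank-two self-dualities $\cU\cong\cU^\vee\otimes\det\cU$ and $\cQ^\vee\cong\cQ\otimes\det\cQ^\vee$ to land on the stated form). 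However, your step three --- which you yourself flag as the main obstacle --- is a genuine gap, and it is exactly where the content of the paper's proof lies. A Lagrangian subspace $A\subset\wedge^3 W$ does \emph{not} by itself induce a quadratic form on the normal space: what the EPW-quartic condition gives at a point $[U]$ is the intersection of two Lagrangian subspaces of a symplectic space, and turning a Lagrangian intersection locus into a symmetric degeneracy locus requires a third Lagrangian subbundle transverse to both, against which $A$ can be written as the graph of a symmetric map. In the paper this reference Lagrangian is $T_{U_0}/P_U$ (coming from the vertex of the cone $C_{U_0}$), identified with $\widetilde{\cT}^\vee$ via the symplectic form; verifying the transversality statements $\widetilde{A}_U\cap(T_{U_0}/P_U)=\widetilde{T}_U\cap(T_{U_0}/P_U)=0$ is an essential step that your outline omits.

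A second missing reduction: the EPW quartic is defined by $\dim(T_U\cap A)\geq 2$, and the line $\wedge^3 U_0$ lies in $T_U\cap A$ for \emph{every} $[U]\in C_{U_0}$. Before any corank-one degeneracy interpretation is possible one must quotient by this line, and then further by $P_U=T_U\cap T_{U_0}$, to reduce to a first degeneracy locus of a map between rank-four bundles. Your proposal jumps directly from "the Lagrangian datum" to "a quadratic form on the normal space" without either reduction, so the pointwise comparison in your step four has nothing to stand on. Your step one (the $(4,4)$ class computation) is a fine consistency check but cannot substitute for these constructions.
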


\begin{proof}

Recall from \cite{IKKR} the description of an EPW quartic section $Z_{A,U_0}$ corresponding to a three-dimensional subspace $U_0$ of a six-dimensional space $W$ i.e.\ $[U]\in \Gr(3,W)$ and a subspace $A\subset \wedge^3 W$ Lagrangian with respect to the forms given by wedge product and such that $\wedge^3 U\subset  A$. Let us fix the data as above. Let $$C_{U_0}:=\{[U]\in \Gr(3,W) \colon \dim (U\cap U_0)\geq 2 \}\subset \Gr(3,W).$$ 
For $[U]\in \Gr(3,W)$ let $T_U:=\wedge^2 U\wedge W $. Then
$$Z_{A,U_0}:=\{[U]\in C_{U_0}: \dim(T_U\cap A)\geq 2 \}$$
is an EPW quartic section. Note that $\wedge^3 U_0
\subset T_U\cap A$ for every $[U]\in C_{U_0}$. 
Hence, if we define $\overline{T}_U=T_U/\wedge^3 U_0 \subset (\wedge^3 U_0)^{\perp}/\wedge^3 U_0$ and $\bar A=\bar A/\wedge^3 U_0 $, we can write $Z_{A,U_0}$ as a first Lagrangian degeneracy locus 
$$Z_{A,U_0}=\{[U]\in C_{U_0}: \dim(\overline{T}_U\cap \bar{A})\geq 1 \}.$$
Denote $P_U=T_U\cap T_{U_0}$. Let then $\widetilde{A}_U=(A\cap P_U^{\perp})/P_U$ and $\widetilde{T}_U=T_U/P_U$. We get 
$$Z_{A,U_0}\setminus [U_0] = \{[U]\in C_{U_0}\setminus [U_0] \colon \dim( \widetilde{A}_U\cap \widetilde{T}_U)\geq 1   \}.$$
Observe that $\widetilde{A}_U\cap (T_{U_0}/P_U)=
\widetilde{T}_U\cap (T_{U_0}/P_U)=0$. 
If we denote by $\widetilde {\mathcal T}$,  $\widetilde{\mathcal A}$ and $\widetilde{\mathcal T}_0$ the bundles with respective fibres $\widetilde{T}_U$, $\widetilde{A}_U$, $T_{U_0}/P_U$ over $[U]$,
then $\widetilde{ \mathcal T}\simeq   \widetilde{\mathcal A} \simeq \widetilde{ \mathcal T_0}^{\vee}$ and we deduce that $Z_{A,U_0}\setminus [U_0]$ is a symmetric degeneracy locus associated to a symmetric map $\widetilde{\mathcal T}\to \widetilde{\mathcal T}^{\vee}$. Now,  note that by construction $\widetilde{\mathcal T}$ is the quotient of the projective tangent bundle $\mathcal T_{G}$ to $\Gr(3,W)$ restricted to $C_{U_0}\setminus [U_0]$, by the projective tangent bundle $\mathcal T_{C}$ to $C_{U_0}\setminus [U_0]$. More precisely, we have the following two exact sequences:
$$0\to \mathcal O(-1) \to \mathcal T_G\to T_G(-1) \to 0,$$
$$0\to \mathcal O(-1) \to \mathcal T_C\to T_C(-1) \to 0.$$
where $T_G$ and $T_C$ are the tangent bundles to $\Gr(3,W)$ and $C_{U_0}\setminus [U_0]$ respectively.
It follows that $\widetilde{\mathcal T}=(T_G/T_C) \otimes \mathcal O(-1)$. 

Let us now restrict ourselves to a hyperplane section $C_H$ of $C_{U_0}$. It corresponds to the choice of some $U_1\subset W$ such that $U_0\cap U_1=0$; then  $$C_H=\{[U]\in \Gr(3,W)\colon \dim (U\cap U_0)=2, \ \dim (U\cap U_1)=1\}.$$ 
In particular $C_H\simeq \Gr(2,U_0) \times \Gr(1,U_1)=: G_1\times G_2$ and $$T_G |_{C_H}=\mathcal U^{\vee}\otimes \mathcal Q= (\mathcal U_{G_1}^{\vee}\oplus \mathcal U_{G_2}^{\vee})\otimes (\mathcal Q_{G_1}\oplus \mathcal Q_{G_2}),$$
where $\mathcal U_{G_1}$, $\mathcal U_{G_2}$, $\mathcal Q_{G_1}$, $\mathcal Q_{G_2}$ are the tautological and quotient vector bundles.
On the other hand $T_C$ appears in an exact sequence
$$ 0 \to T_{C_H}\to T_C|_{C_H} \to \mathcal O_{C_H}(1)\to 0,$$
and therefore $T_{C_H}=\mathcal U_{G_1}^{\vee}\otimes \mathcal Q_{G_1}\oplus \mathcal U_{G_2}^{\vee}\otimes \mathcal Q_{G_2}$
This implies that 
$$T_G/T_C=(T_G/T_{C_H})/(T_C/T_{C_H)}=\mathcal U_{G_1}^{\vee}\otimes \mathcal Q_{G_2}\oplus \mathcal Q_{G_1}\otimes \mathcal Q_{G_2}/\mathcal O_{C_H}(1)=\mathcal U_{G_1}^{\vee}\otimes \mathcal Q_{G_2}.$$
We conclude that $Z_{A,U_0}\cap H$ is  a symmetric degeneracy locus on $C_H=G_1\times G_2$ associated to a symmetric map $$\mathcal U_{G_1}^{\vee}\otimes \mathcal Q_{G_2} \otimes O_{C_H}(-1)\to \mathcal U_{G_1}\otimes \mathcal Q_{G_2}^{\vee} \otimes O_{C_H}(1).$$ Finally since both $\mathcal U_{G_1}$ and $\mathcal Q_{G_2}$ are of rank 2, the latter map is a symmetric map 
\[ T^\vee = \mathcal U_{G_1}\otimes \mathcal Q_{G_2}^{\vee} \to \mathcal U_{G_1}^{\vee}\otimes \mathcal Q_{G_2}=T.\] 
This concludes the proof.
\end{proof}

We thus get a quadric bundle $\Theta$ in $\PP(T)$, that can be seen as the intersection of the image $G$ of $\PP(T)$ through $\oo_{\PP(T)}(1)$,
with a quadric. We shall see that this is a complete intersection of a quadric with a special cubic.

\begin{prop}
 The variety $\PP(T)$ admits a birational morphism onto the determinantal cubic that is the secant of $\PP^2\times \PP^2$ in its
 Segre embedding. The fibres of $\PP(T)$ map to $\PP^3$'s that are spanned by pairs of lines in the two factors of $\PP^2\times \PP^2$. The quadric bundle $\Theta\subset \PP(T)$ is thus mapped via this morphism to a quadric section of the determinantal cubic, a sixfold which is singular along a Verra threefold.
\end{prop}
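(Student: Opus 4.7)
The plan is to construct the morphism explicitly via the linear system $|\cO_{\PP(T)}(1)|$ and then to analyze it fibrewise. Writing the base as $\PP(V) \times \PP(W)$ with $\dim V = \dim W = 3$, the first step is to compute $H^0(\PP(T), \cO(1)) = H^0(T)$. Using $\Omega^1_{\PP^2}(2) \simeq Q^\vee(1)$ and the Euler sequence, one finds $H^0(\Omega^1_{\PP^2}(2)) \cong \wedge^2 V^\vee \cong V$ via a volume form, hence $H^0(T) \cong V \otimes W$ is nine-dimensional. This yields a morphism $\varphi \colon \PP(T) \to \PP^8$.

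Next, I would restrict $\varphi$ to a fibre. At $([\ell], [m])$, the evaluation $V \otimes W \to T|_{([\ell], [m])}$ is identified with the natural surjection $V \otimes W \twoheadrightarrow V/\ell \otimes W/m$; dually $\varphi$ embeds the fibre $\PP^3$ linearly onto $\PP(\ell^\perp \otimes m^\perp) \subset \PP^8$, which is precisely the linear span under Segre of the two lines $\PP(\ell^\perp)$ and $\PP(m^\perp)$, one in each factor of the Segre $\PP(V^\vee) \times \PP(W^\vee)$. This proves the claim on fibres, and since every such $\PP^3$ lies in the rank $\le 2$ locus, the image of $\varphi$ is contained in the determinantal cubic $\Sigma$. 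Birationality onto $\Sigma$ follows from the fact that a generic rank-$2$ tensor determines its column and row spaces uniquely, yielding a unique preimage.

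For the quadric bundle, I would show that the section of $\cO_{\PP(T)}(2)$ defining $\Theta$ comes from a unique quadric $Q \subset \PP^8$. This reduces to verifying that the restriction $H^0(\cO_{\PP^8}(2)) \to H^0(\cO_{\PP(T)}(2))$ is an isomorphism: one has $h^0(S^2 T) = 36 + h^0(\cO(1,1)) = 45 = h^0(\cO_{\PP^8}(2))$ by a short Riemann--Roch computation (via $h^0(S^2 \Omega^1_{\PP^2}(2)) = 6$), and injectivity is clear since no quadric can vanish on the irreducible cubic $\Sigma$. Therefore $\varphi(\Theta) = Q \cap \Sigma$ is a complete intersection of type $(2,3)$ in $\PP^8$, a sixfold.

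Finally, for the singular locus, $\mathrm{Sing}(\Sigma)$ is the rank-$1$ locus, namely the Segre $\PP^2 \times \PP^2$, and for a sufficiently generic $Q$ the Jacobian criterion gives $\mathrm{Sing}(Q \cap \Sigma) = Q \cap \mathrm{Sing}(\Sigma)$. Since the hyperplane class of $\PP^8$ restricts to $\cO(1,1)$ on the Segre, this intersection is a divisor of bidegree $(2,2)$ in $\PP^2 \times \PP^2$, i.e.\ a Verra threefold. The main obstacle I expect is the bookkeeping required to identify the evaluation map in the fibre analysis, together with verifying that the particular $Q$ arising from our $\Theta$ is sufficiently generic for the Jacobian criterion to pin down $\mathrm{Sing}(\varphi(\Theta))$ exactly; should this fail, one would instead inspect directly the symmetric form defining $\Theta$ inherited from the previous proposition.
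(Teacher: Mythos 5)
Your argument is correct and follows essentially the same route as the paper's proof: the morphism is given by $|\cO_{\PP(T)}(1)|$ with $H^0(T)\cong V\otimes W$, each fibre maps to the linear span of the Segre image of a product of lines, and the image is the secant variety of $\PP^2\times\PP^2$, i.e.\ the determinantal cubic. In fact your write-up is more complete than the paper's, which stops after the fibrewise description and treats the remaining assertions as immediate: your $45=45$ count identifying $H^0(S^2T)$ with the quadrics of $\PP^8$, and the Jacobian-criterion analysis locating the singular locus along the $(2,2)$ divisor $Q\cap(\PP^2\times\PP^2)$, are exactly the details needed (the only point to watch is the genericity of the particular quadric $Q$ arising from $\Theta$, which you correctly flag).
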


    \begin{proof}
        Indeed, geometrically the bundle $\mathbb P(T)=\mathbb P(\mathcal U_{G_1}^{\vee}\otimes \mathcal Q_{G_2})$ that contains our quadric surface bundle is mapped via its $\mathcal O_{P(T)}(1)$ 
to a determinantal cubic hypersurface in $\mathbb P^8$. More precisely 
$C_H$ is a copy of $\mathbb P^2\times \mathbb P^2$  and  each fibre 
of the bundle, say over a point 
$(p,q)\in C_H$,  is mapped via $\mathcal O_{P(T)} (1)$ 
to the the span of the product $L_p\times L_q$ of the lines $p^{\perp}\subset G_1^{\vee}$ and $q^{\perp} \subset G_2^{\vee}$ in the Segre embedding of $G_1^{\vee} \times G_2^{\vee}$. 
    \end{proof}

Consider now the restriction of $T$ to a further hyperplane section $F$ of $C_H$. Such a section corresponds to a linear isomorphism $\phi: G_1 \to G_2^{\vee}$ such that $F=\{(p,q)\in G_1\times G_2 \colon \phi(p)(q)=0\}$. After restricting the bundle $\mathbb{P}(T)$ to $\mathbb{P}_F(T)$ on $F$ we get a natural section associated to $\phi$. It is given by $(p,q)\mapsto (\phi^{T}(q),\phi(p))\in G_1^{\vee}\times G_2^{\vee}$ and is a section $s: F\to \mathbb{P}_F(T)$ as $\phi(p)(q)=0$.

This section gives rise to a conic bundle $\Gamma\to F$ obtained as the ramification locus of the fibrewise projection of $\Theta$ from $s(F)$. 
This conic bundle has however a discriminant locus with two components: one is the codimension two linear section of an EPW quartic, and  the other one is the set of  points where the section meets the quadric in the fibre. 



\section{Open problems}
\label{openProblems}

It is a result of \cite{ca,cataneseal} that there is a unique family of quintic surfaces with an even 
set of $16$ nodes. For sextic surfaces, Catanese and Casnati proved that even set of nodes  must have 
cardinality $24, 32, 40$ or $56$. We found $40$ nodes for the surfaces constructed in Section 4.1.
   

The other nodal surfaces we found are contained in the quadric $\QQ_3$ and the flag manifold 
$\FF l_3$.

\begin{prob}
      Can one extend to these two homogeneous spaces the results that have been obtained
      for nodal surfaces in $\PP^3$ (e.g.\ \cite{beauville, cc, ct,vGZ})? 
   \end{prob}

   For example, in analogy to Catanese's result on nodal quintics in
 $\PP^3$, it is natural to ask:
 
 \begin{prob}
    Is there a unique family of even $20$-nodal quartic sections of $\QQ^3$?
\end{prob}


Note that it was proved by Zhao \cite{zhao} that there exists a unique family of $40$-nodal sextics in $\PP^3$. 

A closely related problem is to construct surfaces with more nodes by starting our constructions 
with nodal varieties. Indeed, the original construction by Togliatti of quintic surfaces with 
the maximal possible number of nodes, that is $31$, precisely started with cubic fourfolds 
with $16$ nodes.

\begin{prob}
Starting from nodal Gushel--Mukai fourfolds, construct  sextic surfaces in $\PP^3$ 
and quartic surfaces in $\QQ^3$ with the maximal possible number of nodes.
\end{prob} 

There exist nodal  Gushel--Mukai fourfolds with up to $20$ nodes (this is expected to be the maximal number, cf.~\cite{kv} on such a fourfold), so that one should be able to 
construct  sextics in $\PP^3$ with up to $60$ nodes and quartics in $\QQ^3$ with up to $40$ nodes (note that Schoen surfaces are $40$ nodal quartics in $\QQ^3$). Notice that Van Geemen and Zhao construct sextic surfaces with $56$ nodes \cite{vGZ}.

\medskip
 Then we would like to find more examples of even nodal surfaces of K3 type constructed from
 \HK{} manifolds with antisymplectic involutions.
In order to complete our \hyperlink{main_theorem_a}{Main Theorem A}, we would like to solve

\begin{prob}
    Is the involution on a general surface of type C-4 induced from a non-symplectic involution on 
    a LLSV eightfold? 
\end{prob}

In the same circle of ideas, the following problem arises in the case of Verra fourfolds.

\begin{prob}
 Describe a family of Fano manifolds that specialise to the intersection of the determinantal cubic with a quadric such that the Hodge structure of a general element is related to general \HK{} fourfolds of $K3^{[2]}$ type of Beauville--Bogomolov degree equal to $4$.
\end{prob}

Other examples potentially appear in the literature. 

\begin{prob}
    Describe the general $48$-nodal complete intersection of a cubic and a quartic that is a linear section of the fourfold described in \cite[Corollary 1.4]{CGKK} as a degeneracy locus of a quadric bundle. 
\end{prob}

Another interesting case that was recently studied is provided by the Hessians of cubic fourfolds:
these are sextics in $\PP^5$ whose singular locus is a smooth surface of degree $35$, and they 
are naturally defined as degeneracy loci of symmetric morphisms \cite{pirola}. 
Cutting with general $\PP^3$'s in $\PP^5$
we get sextic surfaces with a "half-even" set of $35$ nodes: this is one of the cases that appear 
in \cite[Lemma 170]{cataneseal}.

\smallskip
Next, we observe that 
in all the cases we considered except C-4 and GM-21  we can use the Lefschetz theorem to conclude that the surfaces are topologically simply connected as sections of a   \HK{}  manifold.
The case C-4 was discussed by Ottem in the appendix of \cite{hu21}. One problem that remains is therefore:

\begin{prob}
    Is a general surface of type GM-21 or K3-35 topologically simply connected?
    Is K3-35 diffeomorphic to GM-21?
\end{prob}

Finally, it follows from \cite[Corollary 1.5]{kuznethyperbolic} that having a description of a nodal surface $S$ as a discriminant 
locus of a conic bundle on $X$ is equivalent to have a torsion sheaf on $S$ (the associated cokernel sheaf).
Thus having different descriptions of a given surface as discriminant gives different torsion sheaves.
They give different elements in the Picard group of the double cover of $S$ (the push-forward of those sheaves).

For example in \cref{CB} we described seven conic bundles on the same codimension $2$ linear section of an EPW sextic:
six corresponding to the construction GM20 and one to K3-31. We infer in this way seven associated cokernel sheaves so seven line bundles on a codimension $2$ section of the related double EPW sextic. 
This is explained by the fact that the Picard group of the codimension 2 section of the double EPW sextic have Picard rank $>1$. So there are $2$-torsion sheaves on a linear codimension 2  section of the EPW sextic that are not restrictions of a $2$-torsion sheaf on the EPW sectic. A natural problem arises.
\begin{prob}\label{pr1}
    Describe the Picard groups of the double covers of the considered nodal discriminant surfaces.
\end{prob}
   

\frenchspacing
\hypersetup{urlcolor=black}
\bibliographystyle{alphaabbr}
\bibliography{nodal.bib}

\end{document}